\begin{document}
\section{Introduction}
\label{sec:introduction}
In probability theory and statistics, the Beta distribution \(\text{Beta}(a, b),\) with parameters \( a, b > 0 \), is a ubiquitous continuous probability distribution defined on the interval $[0, 1]$. Its tail probability is given by
\[ 
\PPs{X\sim \text{Beta}(a, b)}{X \geq u} \triangleq \frac{\Gamma(a + b)}{\Gamma(a) \Gamma(b)}\int_u^1 {x^{a-1} (1 - x)^{b-1}}{} \, dx, \quad 0 \leq u \leq 1,
\]
where \( \Gamma(z) \triangleq \int_0^\infty t^{z-1}e^{-t}dt\) is  the gamma function. Computing \(\PPs{X\sim \text{Beta}(a, b)}{X \geq u}\) is generally intractable~\cite{dutka1981incomplete}; yet accurately estimating it is crucial in various fields, including large deviation theory \cite{zhang2020non}, Bayesian nonparametrics \cite{castillo2017polya}, adaptive Bayesian inference \cite{elder2016bayesian} and random matrix theory \cite{frankl1990some}. This has led to the development of several closed-form exponential upper bounds, with some of the most notable summarized in Table~\ref{tab:bounds}. 
Among these, Kullback–Leibler (KL)-type bounds are particularly noteworthy due to their asymptotic optimality:  
\[\log\PPs{X\sim \text{Beta}(nx, n(1-x))}{X \geq u} \sim_{n\to\infty} -n\kl{x}{u},\quad \text{for }u,x\in (0,1),\text{ see e.g., \cite{lynch1987large}}.\]
In particular, the perturbed KL bound from \cite{henzi2023some} is often considered the most effective, as it is tighter than the Hoeffding-type inequality \cite[Corollary 5]{henzi2023some} and the Bernstein-type inequality \cite[Lemma 6]{henzi2023some}.
This improvement arises from introducing a controlled perturbation \( \eta > 0 \) into the KL bound, enabling slight adjustments to the base mean \( a/(a+b) \). Specifically, the perturbed expression \((a-\eta)/(a+b-\eta)\) tightens the KL divergence by effectively lowering the mean and shifting it slightly further from the reference point \( u \).
\begin{table}[t]
\centering
\begin{tabular}{|c|c|c|}
\hline
Bound on $\log\PPs{X\sim \text{Beta}(a, b)}{X \geq u}$ & Validity range & Type
\\
  \hline
  \transparent{0}
\fbox{\transparent{1}${-2(a+b+1)\pa{u - \frac{a}{a+b}}^2}$} & \multirow{5}{*}{$\frac{a}{a+b}\leq u\leq 1$} & Hoeffding \cite{marchal2017sub} 
  \\
  \cline{1-1}\cline{3-3}
  \transparent{0}\fbox{\transparent{1}
  \(-\frac{\pa{(a+b)u - {a}}^2}{\frac{2ab}{a+b+1}+\frac{4 \pa{(a+b)u - {a}}\pa{b-a}^+}{3(a+b+2)}} \)} && Bernstein \cite{skorski2023bernstein} \\
  \cline{1-1}\cline{3-3}\transparent{0}\fbox{\transparent{1}
  $-(a+b)\kl{\frac{a}{a+b}}{u}$}& & Kullback–Leibler (KL) \cite{dumbgen1998new} \\
  \hline   \transparent{0}\fbox{\transparent{1}
  \(\begin{array}{cc}
  -(a+b-\eta)\kl{\frac{a-\eta}{a+b-\eta}}{u}, \\
  \text{for }a\geq 1, b > 0, \eta= 1+\frac{a-1}{b+1}. 
    \end{array}\)} & {$\frac{a-\eta}{a+b-\eta}\leq u\leq 1$} & {perturbed KL \cite{henzi2023some}}\\
  \hline
\end{tabular}
\caption{Table of well-known exponential upper bounds for the Beta distribution, where $\kl{p}{q}\triangleq p\log(p/q)+(1-p)\log((1-p)/(1-q))\in [0,\infty]$ for $p,q\in [0,1].$}
\label{tab:bounds}
\end{table} 

Similarly, the Dirichlet distribution and its infinite-dimensional extension, the Dirichlet process (DP), introduced by Ferguson in the early 1970s \cite{ferguson1973bayesian}, frequently rely on tail probability bounds in various applications, including multi-armed bandits \cite{belomestny2023sharp}, reinforcement learning \cite{tiapkin2022dirichlet}, and Bayesian bootstrap methods \cite{rubin1981bayesian}. One of the most commonly used bounds is a simple extension of the standard KL bound from \cite{dumbgen1998new}, transitioning from the Beta distribution to DPs by replacing the binary KL with the multivariate KL; see \eqref{rel:chernoff_(1)_lemma1} for its exact formulation. This raises a natural question: Can perturbed KL-type tail probability bounds be extended to Dirichlet distributions and DPs?
In this work, we provide an affirmative answer by introducing new perturbed KL bounds for DPs, which not only refine existing DP results but also unify and extend known bounds for the Beta distribution. Before proceeding, we first introduce the relevant notation.

\subsection{Notation and preliminaries}
For a probability distribution \( p \), we use \( \mathbb{E}_p \) and \( \mathbb{P}_p \) to denote the expectation and probability with respect to \( p \), respectively. Additionally, as introduced earlier, we use the shorthand notations \( \mathbb{E}_{\xi \sim p} \) and \( \mathbb{P}_{\xi \sim p} \) in place of \( \mathbb{E}_p \) and \( \mathbb{P}_p \).
To recall, the Kullback–Leibler (KL) divergence between two probability distributions \( \mu \) and \( \nu \) is defined as:
\[
\KL{\mu}{\nu} \triangleq
\begin{cases}
\mathbb{E}_{\mu} \left[ \log \left( \frac{d\mu}{d\nu} \right) \right] & \text{if } \mu \ll \nu, \\
\infty & \text{otherwise}.
\end{cases}
\]
We consider a compact metric space \( \Omega \) equipped with its Borel \( \sigma \)-algebra \( \mathcal{B}(\Omega) \). Let \( \mathcal{M}(\Omega) \) (resp. \( \mathcal{M}_1(\Omega) \)) denote the space of finite non-negative (resp. probability) measures on \( \Omega \), and let \( \mathcal{B}(\mathcal{M}_1(\Omega)) \) denote the Borel \( \sigma \)-algebra induced by the weak topology on \( \mathcal{M}_1(\Omega) \). The set of continuous functions \( f: \Omega \to \mathbb{R} \) is denoted by \( \mathcal{C}(\Omega) \).
We consider a Dirichlet Process (DP) on \( \Omega \), characterized by a scale parameter \( \alpha > 0 \) and a base distribution \( \nu_0 \in \mathcal{M}_1(\Omega) \), whose measure's law is denoted as \( \text{DP}(\alpha \nu_0) \), and whose realization \( X \) is a random probability measure on the space \( \Omega \). We assume, without loss of generality, that \( \nu_0 \) is supported on \( \Omega \).
The original definition of the DP says that for any finite measurable partition $A_1\sqcup\dots\sqcup A_k=\Omega, \pa{X(A_1),\dots,X(A_k)}\sim\text{Dir}\pa{\alpha\nu_0(A_1),\dots,\alpha\nu_0(A_k)}$, where $\text{Dir}(a_1,\dots,a_k)$ denotes the Dirichlet distribution with
parameters $a_1,\dots,a_k$.
Another important representation of the DP is related to the characterization of the Gamma distribution by \cite{lukacs1955characterization}, and is expressed as \( X = G/G(\Omega) \), where \( G \sim \mathcal{G}(\alpha \nu_0) \) is the standard Gamma process with shape parameter \( \alpha \nu_0 \), i.e.,
$G(A) \sim \text{Gamma}(\alpha\nu_0(A), 1)$ for any measurable set $A \subset \Omega$ \cite{casella2005christian}. The key property here is that \( X = G/G(\Omega) \) is stochastically independent of \( G(\Omega) \). Since independent random variables are simpler to work with, this reparametrization proves useful for deriving properties of the DP. 

For a probability distribution \( \nu \in \mathcal{M}_1(\Omega) \), one of the central information-theoretic measures we are considering is defined as an infimum of KL divergences: for some real-valued continuous function \( f \in \mathcal{C}(\Omega) \) and some \( u \in \mathbb{R} \), we define
\[
\mathcal{K}_{\inf}(\nu, u, f) \triangleq \inf_{\mu \in \mathcal{M}_1(\Omega), \, \mathbb{E}_{\mu}[f] \geq u} \KL{\nu}{\mu},
\]
where by convention, the infimum of the empty set is defined to be \( \infty \). When $\Omega=[d]=\sset{1,\dots,d}$ is finite, we can also use the notation $\Kinf\pa{\pa{p_i}_{i\in [d]},u,\pa{f_i}_{i\in [d]}}$, for $f\in \R^d$ and $p\in \cM_1\pa{[d]}$.
Like the KL divergence, $\Kinf$ admits a variational formula, which we state in Lemma~\ref{lem:Variational formula for Kinf}.
\begin{lemma}[Variational formula for $\Kinf$]
    For all $\nu\in \cM_1(\Omega)$, $f\in \cC(\Omega)$, $u\in [f_{\min},f_{\max})$, where $f_{\max}\triangleq \max_{x\in \Omega} f(x)$, $f_{\min}\triangleq \min_{x\in \Omega} f(x)$, we have
    \[\Kinf(\nu, u, f) = \max_{\lambda\in [0,1/(f_{\max}-u)]} \EEs{\nu}{\log\pa{1-\lambda \pa{f-u}}}.\]
    Moreover, if $\lambda^*$ is the value at which the above maximum is reached, then
 \[\EEs{\nu}{1/\pa{1-\lambda^*(f-u)}}\leq 1.\]
 In particular, $\nu\pa{f^{-1}\pa{\sset{f_{\max}}}}=0$ in the case $\lambda^*=1/(f_{\max}-u)$.
 \label{lem:Variational formula for Kinf}
\end{lemma}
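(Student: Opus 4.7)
The plan is to apply a convex duality argument on the compact $\lambda$-interval. Set $F(\lambda)\triangleq \EEs{\nu}{\log\pa{1-\lambda(f-u)}}$, which is concave in $\lambda$ by concavity of $\log$ and possibly equals $-\infty$ at the right endpoint $1/(f_{\max}-u)$. For the direction $\Kinf(\nu,u,f)\geq \sup_\lambda F(\lambda)$, I would use a change-of-measure trick: for any $\mu\in\cM_1(\Omega)$ with $\EEs{\mu}{f}\geq u$ and any admissible $\lambda$, the quantity $Z\triangleq 1-\lambda(\EEs{\mu}{f}-u) = \EEs{\mu}{1-\lambda(f-u)}$ lies in $[0,1]$, and when $Z>0$ the tilted measure $\tilde\mu$ with $d\tilde\mu/d\mu\triangleq (1-\lambda(f-u))/Z$ is a probability measure. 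Expanding $\log(d\nu/d\tilde\mu)$ and integrating against $\nu$ gives the identity $\KL{\nu}{\mu} = \KL{\nu}{\tilde\mu} + F(\lambda) - \log Z \geq F(\lambda)$, using $\KL{\nu}{\tilde\mu}\geq 0$ and $\log Z\leq 0$.

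For the reverse direction, upper semicontinuity and compactness of $[0,1/(f_{\max}-u)]$ yield a maximizer $\lambda^*$. In the interior case $\lambda^*\in (0, 1/(f_{\max}-u))$, the stationarity condition $F'(\lambda^*)=0$ reads $\EEs{\nu}{(f-u)/(1-\lambda^*(f-u))}=0$, and the elementary identity $1/(1-x) = 1 + x/(1-x)$ turns this into $\EEs{\nu}{1/(1-\lambda^*(f-u))}=1$. Thus $d\mu^*/d\nu\triangleq 1/(1-\lambda^*(f-u))$ defines a probability measure with $\EEs{\mu^*}{f}=u$ (so $\mu^*$ is feasible) and $\KL{\nu}{\mu^*} = \EEs{\nu}{\log(1-\lambda^*(f-u))} = F(\lambda^*)$, which yields $\Kinf(\nu,u,f)\leq F(\lambda^*)$. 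The left boundary $\lambda^*=0$ is easy: $F'(0^+)\leq 0$ forces $\EEs{\nu}{f}\geq u$, so $\nu$ itself is optimal with $\Kinf(\nu,u,f)=0=F(0)$, and the moreover inequality reduces to $\EEs{\nu}{1}=1$.

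The hard part is the right boundary $\lambda^*=1/(f_{\max}-u)$, where $1/(1-\lambda^*(f-u))$ can blow up. Finiteness of $F(\lambda^*)$ already forces $\nu\pa{f^{-1}\pa{\sset{f_{\max}}}}=0$, since otherwise the integrand equals $-\infty$ on a set of positive $\nu$-measure. For the moreover inequality $\EEs{\nu}{1/(1-\lambda^*(f-u))}\leq 1$, I would pick an interior sequence $\lambda_n\uparrow \lambda^*$: concavity of $F$ together with $\lambda^*$ being a maximizer forces $F'(\lambda_n)\geq 0$, which rearranges (via the same identity $1/(1-x) = 1 + x/(1-x)$) to $\EEs{\nu}{1/(1-\lambda_n(f-u))}\leq 1$, and monotone convergence transports the bound to $\lambda^*$. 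Establishing the matching upper bound $\Kinf(\nu,u,f)\leq F(\lambda^*)$ in this degenerate regime is the main technical obstacle: since $d\mu^*/d\nu\triangleq 1/(1-\lambda^*(f-u))$ need not integrate to $1$ and the resulting normalized measure need not be feasible, one must typically perturb the threshold $u$ slightly downward to push the maximizer back into the interior, invoke the interior construction, and pass to the limit using monotonicity and continuity of $\Kinf$ and of $F$ in their arguments.
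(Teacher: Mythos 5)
Note first that the paper does not give its own proof of this lemma; it is cited from \cite{honda2015nonasymptotic,garivier2022klucbswitch}, so your proposal stands on its own. The lower bound $\Kinf(\nu,u,f)\geq\sup_\lambda F(\lambda)$ via the tilted measure $\tilde\mu$ is correct --- it is precisely the Donsker--Varadhan inequality $\KL{\nu}{\mu}\geq \EEs{\nu}{g}-\log\EEs{\mu}{e^g}$ specialized to $g=\log\pa{1-\lambda(f-u)}$ --- and the edge cases ($Z=0$, or $\nu\not\ll\mu$) render the inequality trivially true. The interior case $\lambda^*\in\pa{0,1/(f_{\max}-u)}$ and the left boundary $\lambda^*=0$ of the reverse inequality are also handled correctly, as is the ``moreover'' bound via the one-sided derivative argument at the right boundary.

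The genuine gap is the matching upper bound $\Kinf(\nu,u,f)\leq F(\lambda^*)$ at the right boundary $\lambda^*=1/(f_{\max}-u)$. The ``perturb $u$ downward'' plan is not clearly workable as stated: shrinking $u$ also shrinks the admissible interval $[0,1/(f_{\max}-u)]$, so the new maximizer need not move into the interior, and the continuity of $\Kinf$ in $u$ that you would lean on is essentially the regularity the lemma is meant to supply, not assume. A cleaner, self-contained fix avoids any limiting argument: set $c\triangleq \EEs{\nu}{1/\pa{1-\lambda^*(f-u)}}$, which you already showed satisfies $c\leq 1$; let $\rho$ be the sub-probability measure with $\nu$-density $1/\pa{1-\lambda^*(f-u)}$; pick any $x_0$ with $f(x_0)=f_{\max}$ (which exists since $\Omega$ is compact and $f$ continuous) and put $\mu^*\triangleq\rho+(1-c)\,\delta_{x_0}\in\cM_1(\Omega)$. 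Using $\lambda^*(f_{\max}-u)=1$ and the same identity $\lambda^*(f-u)/\pa{1-\lambda^*(f-u)}=1/\pa{1-\lambda^*(f-u)}-1$, one gets $\lambda^*\EEs{\mu^*}{f-u}=(c-1)+(1-c)=0$, so $\EEs{\mu^*}{f}=u$ exactly and $\mu^*$ is feasible. Since $\nu\pa{f^{-1}\pa{\sset{f_{\max}}}}=0$, the Dirac part is $\nu$-null, hence $\nu\ll\mu^*$ with $d\nu/d\mu^*=1-\lambda^*(f-u)$ $\nu$-a.s., and $\KL{\nu}{\mu^*}=F(\lambda^*)$. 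This closes the degenerate case directly.
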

\noindent
This formula appears in \cite{honda2015nonasymptotic,garivier2022klucbswitch} and is an essential tool for deriving the deviation and concentration results involving $\Kinf$, as we will see in the next subsection. 
$\mathcal{K}_{\inf}$ can be interpreted as a distance from the measure $\nu$ to a set of all measures $\mu$ satisfying the constraint, where the distance is measured by the KL-divergence.
The measure $\mu$ solving this optimization problem is called moment projection ($M$-projection) or reversed information projection ($rI$-projection), see \cite{csiszar2003information,bishop2006pattern,murphy2022probabilistic}.
Since the KL-divergence is not symmetric, this is different from the more common information projection ($I$-projection), 
\(
\inf_{\mu\in S} \KL{\mu}{\nu},
\)
 appearing, for example, in Sanov-type deviation bounds \cite{sanov1961probability}.
The $I$-projections have an excellent geometric interpretation because the KL can be viewed as a Bregman divergence. The $M$-projections are not Bregman divergences and lack geometric interpretation. However, they are deeply connected to the maximum likelihood estimation when the measure $\nu$ is the empirical measure of a sample \cite[Lemma 3.1]{csiszar2004information}. Additionally, within a Bayesian framework, $M$-projections naturally arise as a rate function in the context of large deviation principles (LDP) \cite{ganesh1999inverse}, which we recall in the following paragraph. For example, we will see that $\Kinf$ is used to express the standard LDP result for DP (see Theorem~\ref{thm:gen_ldp_DP}). $M$-projections also naturally appear in lower (and sometimes upper) bounds for multi-armed bandits \cite{Lai1985asymptotically,burnetas1996optimal}. 

Finally, we briefly review the concept of large deviation principles (LDP), which plays an important role in this paper.

\begin{definition}[Rate function]
A function $I$ defined on $\cM_1\pa{\Omega}$ is a rate function if it is lower semicontinuous with values in $[0,\infty]$ (such that all level set $\sset{x, I(x)\leq t},$ for $t\in [0,\infty)$, is closed). A rate function is good if the level sets are compact.  
The effective domain of $I$ is $D_I\triangleq \sset{x, I(x)<\infty}.$
\end{definition}

\begin{definition}[Large deviation principle: LDP]\label{def:LDP}
    A sequence of probability measures $(\mu_n)$ satisfies an LDP with speed $n$ (we can avoid explicitly stating the speed if it is clear from the context) and rate function $I$ if:
    \begin{align*}
        (i)~~~~&\mbox{For all closed set $F$, $\limsup_{n}\frac{1}{n}\log\mu_n\pa{F} \leq -\inf_{x\in F} I(x)$,}\\
        (ii)~~~~&\mbox{For all open set $G$,
        $\liminf_{n}\frac{1}{n}\log\mu_n\pa{G} \geq -\inf_{x\in G} I(x)$.}
    \end{align*}
\end{definition}

\subsection{Existing KL-concentration results for DPs}

Before presenting our results, we briefly review some existing DP concentration results involving the KL. It turns out that non-asymptotic concentration bounds for DPs are limited. They are mainly based on the above Gamma representation, combined with the moment generating function (MGF) formula of the Gamma process \cite{van1997gamma, vershik2001remarks}: for each continuous function $f\leq 1$ on $\Omega$, so that $\nu_0\pa{f^{-1}\pa{\sset{1}}}=0$,
\begin{align}\textstyle\EEs{G\sim\cG(\alpha\nu_0)}{\exp\pa{\int fdG}} = \exp\pa{-\alpha \EEs{\nu_0}{\log\pa{1-f}}} 
.\label{rel:MGFGP}\end{align}
Indeed, as a simple example, with the same assumptions and notations as in Lemma~\ref{lem:Variational formula for Kinf}, one can consecutively use the Gamma representation $X=G/G(\Omega)$, Chernoff bound, equality~\eqref{rel:MGFGP} and Lemma~\ref{lem:Variational formula for Kinf} to get, for $f\in\cC(\Omega)$ and $u\in [f_{\min},f_{\max})$,
\begin{align}\nonumber
    \PPs{X\sim \DP\pa{\alpha \nu_0}}{\EEs{X}{f} \geq u} 
    &\leq \textstyle\EEs{G\sim\cG(\alpha\nu_0)}{\exp\pa{\lambda^*\int\pa{f - u}dG}}
    \\&= e^{-\alpha \EEs{\nu_0}{\log\pa{1-\lambda^*\pa{f - u}}}} = e^{-\alpha \Kinf(\nu_0, u, f)}.\label{rel:chernoff_(1)_lemma1}
\end{align}
This directly extends the standard KL bound for the Beta distribution from \cite{dumbgen1998new}, which we recover 
by considering a finite $\Omega$ and setting \( f(x) = \mathbb{I}\{x\in A\} \), for some $A\subset \Omega$: the minimizer in \( \Kinf \) is then given by   
\(
\mu = u \frac{\nu_0(\cdot \cap A)}{\nu_0(A)} + (1 - u) \frac{\nu_0(\cdot \setminus A)}{\nu_0(\Omega \setminus A)},
\)  
which leads to  
\(
\Kinf(\nu_0,u,f) = \kl{\nu_0(A)}{u}.
\)

On the other hand, Gaussian approximations of the Dirichlet distribution have been investigated to obtain probabilistic bounds. In particular, \cite{marchal2017sub} established its subgaussianity, though their bound relies on a proxy variance dictated by the largest Dirichlet parameter, leading to a rather coarse estimate. A more refined Gaussian-like approximation was introduced in \cite{belomestny2023sharp}, formulated as
$\PPs{X\sim \cN(0,1)}{X\geq \sqrt{2\alpha \Kinf(\nu_0,u,f)}}$, which exhibits a KL-like structure. 
Although related, our work differs by providing a direct KL bound rather than one expressed through a Gaussian variable.

On the asymptotic side, the literature on LDPs for DPs is 
well-established \cite{doss1982tails, lynch1987large,ganesh2000large,feng2007large,feng2023hierarchical} and it is known that the probability that $X\sim \DP\pa{\alpha\nu_0}$ deviates to another distribution $\nu$ decreases exponentially (with respect to $\alpha$) at a rate given by $\KL{\nu_0}{\nu}$. More precisely, the standard LDP result for DP is as follows.
\begin{theorem}[see \cite{ganesh2000large} for a similar statement regarding the second half of the result]\label{thm:gen_ldp_DP}
Let $(\mu_n)$ be a sequence of measures in $\cM(\Omega)$, 
 such that\[\frac{\mu_n}{n}\to\mu\in \cM_1\pa{\Omega},~~\text{weakly.}\]
If \({\log\pa{\min\pa{\mu_n(A),1}}}=o\pa{n}\) for each $\mu$-continuity set with non-empty interior $A\subset \Omega$, then $\pa{\DP\pa{\mu_n}}_n$ satisfies a LDP with speed $n$ and rate function $I(\nu) = \KL{\mu}{\nu}$.
In particular,
$\pa{\DP\pa{\alpha\nu_0}}_\alpha$ satisfies a LDP with speed $\alpha$ and rate function $I(\nu) = \KL{\nu_0}{\nu},$ i.e.,
for all $B\in \cB\pa{\cM_1(\Omega)}$, if $B^{\mathrm{o}}$ (resp. $\bar B$) denotes the interior of $B$ (resp. the closure),
\begin{align*}-\inf_{\nu\in B^{\mathrm{o}}} I(\nu)\leq \liminf_{\alpha}\frac{1}{\alpha}\log\PPs{\DP\pa{\alpha\nu_0}}{B}\leq \limsup_{\alpha}\frac{1}{\alpha}\log\PPs{\DP\pa{\alpha\nu_0}}{B} \leq -\inf_{\nu\in \bar B} I(\nu).\end{align*}
\end{theorem}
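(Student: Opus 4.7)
The plan is to reduce the infinite-dimensional LDP on $\cM_1(\Omega)$ to a family of finite-dimensional LDPs along $\mu$-continuity partitions of $\Omega$, lift them via Dawson--G\"artner, and finally identify the rate function as $\KL{\mu}{\nu}$ through the classical supremum characterization of relative entropy over partitions. Throughout, I would exploit the Gamma-process representation $X = G/G(\Omega)$ with $G\sim \cG\pa{\mu_n}$, together with the MGF identity~\eqref{rel:MGFGP}, as the workhorse for cumulant computations.

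\textbf{Finite-dimensional step.} First I would fix a finite $\mu$-continuity partition $\pi = (A_1,\dots,A_k)$ of $\Omega$ and note that $\pa{X(A_1),\dots,X(A_k)} \sim \text{Dir}\pa{\mu_n(A_1),\dots,\mu_n(A_k)}$. Writing each $X(A_j) = G(A_j)/G(\Omega)$ with independent $G(A_j)\sim \text{Gamma}\pa{\mu_n(A_j),1}$, Cram\'er's theorem (or direct Laplace asymptotics via~\eqref{rel:MGFGP}) yields an LDP for $(G(A_j)/n)_j$ with rate $\sum_j (g_j - \mu(A_j) - \mu(A_j)\log(g_j/\mu(A_j)))$. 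The contraction principle applied to $g \mapsto g/\sum_i g_i$ then produces an LDP on the probability simplex with rate $I_\pi(p) = \sum_j \mu(A_j)\log(\mu(A_j)/p_j)$, since the Gamma-sum rate optimized over the positive rescaling $s p$ of $p$ decouples cleanly into a universal penalty $s-1-\log s$ (vanishing at $s=1$) plus the finite-partition KL. The hypothesis $\log\min(\mu_n(A),1)=o(n)$ enters here to control cells $A_j$ with $\mu(A_j)=0$, ensuring the Gamma MGF converges at the right speed and that the LDP extends to the simplex boundary with the correct infinite-penalty behavior whenever $\nu(A_j)>0=\mu(A_j)$.

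\textbf{Lifting and rate identification.} I would then apply Dawson--G\"artner to the projective system indexed by $\mu$-continuity partitions, obtaining an LDP on $\cM_1(\Omega)$ equipped with the projective-limit topology and rate $I(\nu) = \sup_\pi I_\pi(\nu_\pi)$. Two further points close the proof: first, the projective-limit and weak topologies on $\cM_1(\Omega)$ coincide when $\Omega$ is compact metric, since continuous functions are uniformly approximable by step functions along arbitrarily fine $\mu$-continuity partitions; second, the standard monotone-convergence characterization of relative entropy as a supremum of finite-partition KL divergences gives $\sup_\pi I_\pi(\nu_\pi)=\KL{\mu}{\nu}$. Exponential tightness comes for free since $\cM_1(\Omega)$ is weakly compact.

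\textbf{Main obstacle.} The subtlest step should be the joint treatment of (a) partition cells with $\mu(A_j)=0$ in the finite-dimensional LDP, where the hypothesis on $\log\min(\mu_n(A),1)$ is precisely what keeps the Gamma cumulant limits valid up to the simplex boundary, and (b) the projective-limit lift, which needs the hypothesis along a family of $\mu$-continuity partitions rich enough to generate the weak topology. Restricting throughout to $\mu$-continuity partitions on a compact metric $\Omega$ is what makes both ingredients simultaneously available, and this is where I expect the bulk of the technical bookkeeping to lie.
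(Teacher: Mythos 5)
Your proposal takes a genuinely different route from the paper. You lift the finite-dimensional Dirichlet LDPs to $\cM_1(\Omega)$ in one shot via Dawson--G\"artner, whereas the paper treats the upper and lower bounds separately: the upper bound comes from an abstract Baldi-style bound (Fact~\ref{fact:LDP_general_upper_bound}) applied after showing in Lemma~\ref{lem:lynch_DP} that $\frac1n\Lambda_{\mu_n}(n\phi)$ converges to the convex conjugate of $\KL{\mu}{\cdot}$, together with weak compactness of $\cM_1(\Omega)$; the lower bound is obtained by approximating a basic weak-open set $U_{\phi,x,\eta}$ with a simple $\phi_k$ subordinate to a $\mu$-continuity partition, applying the finite-dimensional Dirichlet LDP and the contraction principle, and then invoking the supremum characterization of KL over refining partitions (Fact~\ref{fact:chara_kl}). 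Your finite-dimensional ingredient --- Gamma LDP, contraction to the simplex, the hypothesis $\log\min(\mu_n(A),1)=o(n)$ controlling the simplex boundary --- is exactly the paper's Lemma~\ref{lem:LDP_gamma}, Proposition~\ref{prop:Kl-gamma} and Theorem~\ref{thm:ldp_dir}, and the final rate identification via the partition-supremum of KL is the same in both. What the Dawson--G\"artner route buys is conceptual economy (a single lifting theorem instead of separate upper/lower arguments); what it costs is a topological tax that the paper's split approach avoids.

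That tax is where your sketch has a real gap. You assert that the projective-limit and weak topologies on $\cM_1(\Omega)$ coincide when $\Omega$ is compact metric, ``since continuous functions are uniformly approximable by step functions along arbitrarily fine $\mu$-continuity partitions.'' This argument only gives one inclusion: it shows the weak-topology generators $\nu\mapsto\int f\,d\nu$ are continuous in the projective-limit topology, being uniform limits of the finite sums $\nu\mapsto\sum_i c_i\nu(A_i)$. The converse fails: the projective coordinate $\nu\mapsto\nu(A)$, even for a $\mu$-continuity set $A$, is \emph{not} weakly continuous at a $\nu$ with $\nu(\partial A)>0$ (take $\Omega=[0,1]$, $\mu$ Lebesgue, $A=[0,1/2]$, and $\nu_n=\delta_{1/2+1/n}\to\delta_{1/2}$ weakly, while $\nu_n(A)=0\not\to 1=\delta_{1/2}(A)$). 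So the projective-limit topology is \emph{strictly} finer, and $\cM_1(\Omega)$ is not compact (nor even closed) in the ambient projective limit, which moreover strictly contains $\cM_1(\Omega)$ because a consistent family of marginals need only define a finitely additive set function. Dawson--G\"artner therefore yields an LDP on that strictly larger space, in a strictly finer topology. To close the argument you would have to (a) prove the rate function $\sup_\pi I_\pi$ is $+\infty$ off the copy of $\cM_1(\Omega)$, and then (b) observe that an LDP in a finer topology passes to the coarser weak topology because weak-open sets remain open and weak-closed sets remain closed. Step (b) is routine, but step (a) is a genuine piece of work (analogous to the corresponding step in Dembo--Zeitouni's proof of Sanov's theorem), and your sketch presents the whole topology-matching issue as settled by the uniform-approximation remark. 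As it stands, that claim is incorrect and the lift is not justified.
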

\noindent
We do not claim this result as original, as it follows from a standard approach. However, for the sake of completeness and because we are not aware of it being explicitly stated elsewhere, we provide the proof in Appendix~\ref{app:ldp}. While our work does not focus on asymptotic results, this theorem will be instrumental in establishing the desired non-asymptotic probabilistic bounds. In fact, the LDP rate function often appears in non-asymptotic concentration inequalities. For instance, consider a process of real-valued i.i.d. random variables $(Y_i)$. Then, it's easy to see that the sequence $\log\PP{{\sum_{i=1}^n Y_i}/n \geq x}$ is superadditive w.r.t. $n$. Therefore, according to the superadditive lemma due to Fekete \cite{fekete1923verteilung}, we can conclude that for all $n\geq 1$,
\begin{align*}\textstyle\frac{1}{n}\log\PP{\frac{1}{n}{\sum_{i=1}^n Y_i} \geq x} &\leq \textstyle{\sup_{m\geq 1}\frac{1}{m}\log\PP{\frac{1}{m}{\sum_{i=1}^m Y_i} \geq x}} \\&= \textstyle{\lim_{m\to\infty}\frac{1}{m}\log\PP{\frac{1}{m}{\sum_{i=1}^m Y_i} \geq x}},\end{align*}
which is minus the corresponding LDP rate function. 

In this paper, one of our goals is to apply a similar superadditivity approach for DPs.\footnote{Notably, a similar approach was employed in \cite{perrault2024new}, but with a focus on bounding the moment-generating function, whereas our goal is to directly bound the probability.} More precisely, to showcase our method, we can re-prove \eqref{rel:chernoff_(1)_lemma1}, this time using superadditivity: We first prove that the function $h:\alpha\in \R_+^* \mapsto\log\PPs{X\sim \DP\pa{\alpha \nu_0}}{\EEs{X}{f} \geq u}$
is superadditive:
\begin{align}\nonumber
\exp\pa{h\pa{\alpha}+h\pa{\beta}}
&=\nonumber
\PPs{(X,X')\sim \DP\pa{\alpha \nu_0}\otimes \DP\pa{\beta \nu_0}}{\EEs{X}{f}\geq u, \EEs{X'}{f} \geq u}
\\&=\textstyle\nonumber
\PPs{(G,G')\sim \cG\pa{\alpha \nu_0}\otimes \cG\pa{\beta \nu_0}}{\int\pa{f - u}dG\geq 0, \int\pa{f - u}dG'\geq 0}
\\&\leq\textstyle\nonumber
\PPs{(G,G')\sim \cG\pa{\alpha \nu_0}\otimes \cG\pa{\beta \nu_0}}{\int\pa{f - u}(dG+dG')\geq 0}
\\&=\textstyle\nonumber
\PPs{G\sim \cG\pa{(\alpha+\beta) \nu_0}}{\int\pa{f - u}dG\geq 0}
\\&=
\exp\pa{h(\alpha+\beta)}.\label{mulpatern}
\end{align}
Then, by Fekete's superadditivity lemma, the limit $\lim_{\alpha\to\infty}h(\alpha)/\alpha$ exists and is equal to $\sup_{\alpha>0}h(\alpha)/\alpha$. Finally, from the last inequality in Theorem~\ref{thm:gen_ldp_DP}, we recover \eqref{rel:chernoff_(1)_lemma1} as
\[h(\alpha)/\alpha\leq \lim_{\alpha\to\infty}h(\alpha)/\alpha \leq -\Kinf(\nu_0, u, f).\]
In Section~\ref{sec:log_deviation}, we refine this approach by using a more sophisticated sequence of measures than \( (\alpha \nu_0)_\alpha \) in Theorem~\ref{thm:gen_ldp_DP}. This allows us to improve the bound by incorporating a unit mass perturbation \( \eta \) within the KL divergence.
In the subsequent section~\ref{sec:ex}, we will generalize this result to perturbations that go beyond the unit mass case, this time without relying on superadditivity but instead reducing back to the Beta distribution case.
\section{Perturbed KL bounds for DPs through superadditivity of the log deviation}
\label{sec:log_deviation}
We want to extend the perturbed KL bound of \cite{henzi2023some} from the Beta distribution to DPs. In an earlier version of their paper, \cite{henzi2023some} used a unit mass perturbation of $\eta=1$, which they were able to refine to $\eta=1+(a-1)/(b+1)$ in a subsequent (and current) version. In this section, we focus on extending this unit mass perturbed KL bound to the DP. 
\subsection{Half-spaces}
Our approach is straightforward: we first establish the superadditivity of the perturbed log deviation \( t \mapsto \log \PPs{X\sim\DP\pa{t \nu + \eta}}{\EEs{X}{f}\geq u} \) for \( t > 0 \), $\eta\in \cM(\Omega)$ with $\eta(\Omega)\leq 1$, and \( \nu \in \cM_1(\Omega) \). Then, after applying Fekete’s lemma, we reparametrize \( t \) and \( \nu \) in terms of \( \alpha \), \( \nu_0 \), and \( \eta \) to transfer the perturbation \( \eta \) inside the KL bound. The challenge now lies in handling the additional term \( \eta \) when proving superadditivity. Specifically, instead of a purely linear dependency on the scale parameter \( t \) in the Gamma process, we now have an affine function \( t \mapsto t\nu + \eta \), introducing an additional constant term. Our strategy, outlined in Lemma~\ref{lem:mumu}, is to establish a multiplicative behavior of the deviation probability --- when marginalized over the Gamma sample associated to the perturbation $\eta$ --- with respect to the Gamma samples associated to the linear component $t\nu$ (represented by the distributions \( \mu \) and \( \mu' \) in the lemma).

\begin{lemma}\label{lem:mumu}
Let $\eta\in \cM\pa{\Omega}$ such that $\eta\pa{\Omega}\leq 1$ and let $f,g\in \cC(\Omega)$. We define
$$\textstyle\ell(\mu)\triangleq\PPs{D\sim \cG\pa{\eta}}{\int fdD \geq \int g d\mu},$$ for $\mu\in \cM(\Omega)$. Then, for $\mu,\mu'\in \cM(\Omega)$,
$\ell(\mu+\mu')\geq \ell(\mu)\ell(\mu')$.
\end{lemma}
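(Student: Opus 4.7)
The plan is to reduce the lemma to a one-dimensional log-superadditivity of a survival function, and then exploit the Gamma--Dirichlet decomposition of $D$ together with the decreasing failure rate (DFR) property of the Gamma distribution with shape parameter at most $1$.

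Set $Z \triangleq \int f\,dD$ and $F(c) \triangleq \PPs{D\sim \cG(\eta)}{Z \geq c}$. Since $\mu,\mu'$ are deterministic measures, $\ell(\mu) = F\pa{\int g\,d\mu}$, and by linearity of the integral in the measure, $\ell(\mu + \mu') = F\pa{\int g\,d\mu + \int g\,d\mu'}$. The lemma thus reduces to showing $F(c+c') \geq F(c)\,F(c')$ for all $c,c'\in\R$. When either $c$ or $c'$ is non-positive, monotonicity of $F$ combined with $F\leq 1$ yields this immediately, so the nontrivial case is $c,c' > 0$.

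For this case, I would invoke the Gamma--Dirichlet decomposition recalled in the introduction: write $D = R\cdot Y$ with $R \triangleq D(\Omega) \sim \mathrm{Gamma}(\eta(\Omega),1)$ and $Y \triangleq D/D(\Omega) \sim \DP(\eta)$ independent, and set $S \triangleq \int f\,dY$ and $\overline{F}_R(r) \triangleq \mathbb{P}(R\geq r)$. Then $Z = RS$ with $R\perp S$, and since $R>0$ almost surely, conditioning on $S$ yields, for any $c>0$, $\mathbb{P}(RS\geq c\mid S) = \mathbb{I}\{S>0\}\,\overline{F}_R(c/S)$. The hypothesis $\eta(\Omega)\leq 1$ makes the density of $R$ log-convex on $(0,\infty)$, so $R$ has decreasing failure rate; equivalently, $-\log \overline{F}_R$ is concave on $[0,\infty)$ and vanishes at the origin, hence subadditive, giving $\overline{F}_R(r_1+r_2) \geq \overline{F}_R(r_1)\,\overline{F}_R(r_2)$ for all $r_1,r_2\geq 0$. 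Applying this pointwise on $\{S>0\}$ with $r_i = c_i/S$, and defining $A_c(S) \triangleq \mathbb{I}\{S>0\}\,\overline{F}_R(c/S)$, yields $A_{c_1+c_2}(S) \geq A_{c_1}(S)\,A_{c_2}(S)$ almost surely.

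To conclude, I invoke a correlation inequality: for each $c>0$, $S\mapsto A_c(S)$ is non-decreasing on $\R$ (identically zero on $\{S\leq 0\}$ and increasing on $\{S>0\}$, since $c/S$ decreases in $S$ and $\overline{F}_R$ is decreasing). Hence $A_{c_1}(S)$ and $A_{c_2}(S)$ are comonotonic functions of the single random variable $S$, and Chebyshev's correlation inequality gives $\mathbb{E}[A_{c_1}(S)A_{c_2}(S)] \geq \mathbb{E}[A_{c_1}(S)]\,\mathbb{E}[A_{c_2}(S)] = F(c_1)\,F(c_2)$. Chaining,
\[ F(c_1+c_2) = \mathbb{E}[A_{c_1+c_2}(S)] \geq \mathbb{E}[A_{c_1}(S)\,A_{c_2}(S)] \geq F(c_1)\,F(c_2), \]
which completes the proof. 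The main subtlety is the DFR property of $R$, which is exactly what the hypothesis $\eta(\Omega)\leq 1$ was designed to grant; the sign of $S$ is handled cleanly by $\mathbb{I}\{S>0\}$, since $R>0$ a.s. ensures that negative values of $S$ contribute nothing to the probability when $c>0$.
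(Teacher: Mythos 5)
Your proof is correct, and it takes a genuinely different route from the paper while resting on the same Gamma--Dirichlet decomposition. The paper factors the total mass $D(\Omega)\sim\text{Gamma}(\eta(\Omega),1)$ further as $BE$ with $B\sim\text{Beta}(\eta(\Omega),1-\eta(\Omega))$ and $E\sim\text{Exp}(1)$, conditions on the remaining randomness $(Z,B)$, and integrates out $E$; since the Exponential survival function is \emph{exactly} multiplicative, $\ell(\mu)$ collapses to $\EE{W^{\int g\,d\mu}}$ for a $[0,1]$-valued $W$, and the conclusion follows from the Lyapunov/power-mean inequality (Proposition~\ref{prop:Ee^Z}, proved via Jensen). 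You instead keep $R=D(\Omega)$ intact, condition on the normalized part $S=\int f\,dY$, and supply the multiplicative step not as an identity but as an inequality: the DFR property of $\text{Gamma}(k,1)$ for $k\leq 1$ gives $\overline{F}_R(r_1+r_2)\geq\overline{F}_R(r_1)\overline{F}_R(r_2)$, and Chebyshev's correlation inequality handles the remaining expectation over $S$. Note that Proposition~\ref{prop:Ee^Z} is in fact a special case of the correlation inequality you invoke (powers of a single nonnegative variable are comonotonic), so the two proofs share the same second step; where they diverge is the first. The constraint $\eta(\Omega)\leq 1$ is consumed differently in each: the paper needs it so the Beta factor has a valid second shape parameter $1-\eta(\Omega)\geq 0$, whereas you need it to secure log-convexity of the Gamma density and hence DFR. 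The paper's extraction of the Exponential turns the pointwise step into an exact equality, which is slightly cleaner; your argument is perhaps more conceptual since DFR/log-superadditivity of the survival function is a named, reusable property, but it does import one more external fact. One small omission worth a sentence: the decomposition $D=RY$ is degenerate when $\eta(\Omega)=0$ (then $D\equiv 0$), but in that case $F(c)=\mathbb{I}\{c\leq 0\}$ and the log-superadditivity is immediate, matching the paper's explicit handling of the degenerate Beta case.
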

\begin{proof}
We first treat two simple cases: If $\int g {d\mu}\leq 0$, then $\ell(\mu+\mu')\geq \ell(\mu')\geq \ell(\mu)\ell(\mu').$ Similarly, if $\int g {d\mu'}\leq 0$, $\ell(\mu+\mu')\geq \ell(\mu)\geq \ell(\mu)\ell(\mu').$

We now consider the scenario where both $\int g {d\mu'}$ and $\int g {d\mu}$ are positive. We begin by considering a special parametrization of $D$:
Let $Z,B,E\sim \DP\pa{\eta}\otimes\text{Beta}\pa{\eta(\Omega),1-\eta(\Omega)}\otimes\text{Exp}(1)$, by considering the degenerate Beta distribution if $\eta\pa{\Omega}\in\sset{0,1}$. Since, $\text{Exp}(1) = \text{Gamma}(1, 1)$, we have $BE\sim \text{Gamma}(\eta(\Omega), 1)$. Thus $D=ZBE\sim \cG\pa{\eta}.$ We can therefore write
\begin{align}\textstyle\ell(\mu+\mu') = \PPs{Z,B,E}{BE\int f dZ
\geq {\int {g} d\pa{ \mu+\mu' }}
}=\EEs{Z,B}{h(BZ)^{\int {g} d\pa{ \mu+\mu' }}},\label{lem:supperadditivePbis,,eq:1}
\end{align}
where 
\[h(\nu)\triangleq
\left\{
    \begin{array}{ll}
        0 & \mbox{if }  \int{{f}d\nu }\leq 0\\
\exp\pa{-\pa{\int{{f}d\nu }}^{-1} }& \mbox{otherwise.}
    \end{array}
\right.
\]
From Proposition~\ref{prop:Ee^Z}, we have 
\[\eqref{lem:supperadditivePbis,,eq:1}\geq \EEs{Z,B}{{h(BZ)}^{\int {g} d{ \mu }}}\EEs{Z,B}{{h(BZ)}^{\int {g} d{ \mu' }}}{}=\ell\pa{\mu}\ell\pa{\mu'}.\]
\end{proof}

We now explain why the perturbation is limited by a mass of 1. In the above proof, we use the Gamma process representation of the DP. Thus, the perturbation has its own associated Gamma process, with total mass following a \( \text{Gamma}(\eta(\Omega),1) \) distribution (the random variable \( BE \)). For \( \eta(\Omega) = 1 \), this reduces to an exponential distribution. The use of the Beta random variable \( B \) is somewhat artificial but enables handling the more general case where \( \eta \) has a total mass less than 1. The result is obtained by leveraging the multiplicative property of the exponential distribution survival function.

We are now ready to state the main superadditivity result in Lemma~\ref{lem:supperadditiveP__}. The challenging part --- handling the perturbation \( \eta \) --- has already been addressed, leaving us with only the linear component \( t\nu \). Since Gamma processes have independent increments, the product of the \( s \) and \( t \) components interacts well with expectation, leading to a clean supperaditivity formulation of the log deviation.

\begin{lemma}[Supperaditivity of the log deviation with a unit mass perturbation] Let $f\in \cC(\Omega)$, $u\in \R$, $\nu\in \cM_1\pa{\Omega}$ and $\eta\in \cM\pa{\Omega}$ such that $\eta\pa{\Omega}\leq 1$. Then, the function \[t \mapsto \log \PPs{X\sim\DP\pa{t \nu + \eta}}{\EEs{X}{f}\geq u}\]
is superadditive on $(0,\infty)$.
\label{lem:supperadditiveP__}
\end{lemma}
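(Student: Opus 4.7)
The plan is to reduce the superadditivity to the supermultiplicative estimate of Lemma~\ref{lem:mumu}, via the Gamma process representation of the DP. Writing $X = G/G(\Omega)$ with $G \sim \cG(t\nu + \eta)$, and using that $G(\Omega) > 0$ almost surely, the event $\{\EEs{X}{f} \geq u\}$ coincides (up to a null set) with $\{\int (f-u)\,dG \geq 0\}$. By infinite divisibility of the Gamma process I can decompose $G \stackrel{d}{=} H + D$ with $H \sim \cG(t\nu)$ and $D \sim \cG(\eta)$ independent. Setting $P(t) \triangleq \PPs{X\sim\DP(t\nu+\eta)}{\EEs{X}{f} \geq u}$ and conditioning on $H$ therefore yields
\[
P(t) = \EEs{H \sim \cG(t\nu)}{\ell(H)}, \qquad \ell(\mu) \triangleq \PPs{D\sim\cG(\eta)}{\int (f-u)\, dD \geq \int (u-f)\, d\mu}.
\]

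This formula matches Lemma~\ref{lem:mumu} exactly, with the lemma's $(f,g)$ replaced by $(f-u, u-f)$, giving the key inequality $\ell(\mu + \mu') \geq \ell(\mu)\ell(\mu')$ for all $\mu, \mu' \in \cM(\Omega)$. With this in hand, for $s,t > 0$ I take independent $H \sim \cG(t\nu)$ and $H' \sim \cG(s\nu)$, note that $H+H' \sim \cG((s+t)\nu)$, and chain
\begin{align*}
P(s+t) = \EEs{H,H'}{\ell(H+H')} \geq \EEs{H,H'}{\ell(H)\ell(H')} = \EEs{H}{\ell(H)}\,\EEs{H'}{\ell(H')} = P(t)P(s),
\end{align*}
invoking supermultiplicativity of $\ell$ and then the independence of $H$ and $H'$. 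Taking logarithms produces the announced superadditivity on $(0,\infty)$.

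The heavy lifting---controlling how the perturbation $\eta$ interacts with the Gamma sample associated to $t\nu$---is already done inside Lemma~\ref{lem:mumu}, so I do not expect a serious obstacle. The conceptual move is to absorb $\eta$ entirely into the $D$-factor of the decomposition $G = H + D$, and to flip signs so that the constraint $\int(f-u)\,d(H+D)\geq 0$ takes the form $\int f_{\mathrm{lem}}\,dD \geq \int g_{\mathrm{lem}}\,d\mu$ demanded by the lemma. The only remaining technicalities are the almost-sure positivity of $G(\Omega)$ (immediate since $t\nu+\eta$ has positive total mass for $t>0$) and a routine Fubini argument to justify the conditioning identity for $P(t)$.
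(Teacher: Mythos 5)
Your proof is correct and matches the paper's argument essentially line for line: both decompose the Gamma process as an independent sum $\cG(s\nu)\otimes\cG(t\nu)\otimes\cG(\eta)$, cast the deviation probability as $\mathbb{E}[\ell(G+G')]$ with $\ell$ defined as in Lemma~\ref{lem:mumu} applied to the pair $(f-u,\,u-f)$, invoke the supermultiplicativity $\ell(\mu+\mu')\geq\ell(\mu)\ell(\mu')$, and finish by factoring the expectation via independence. The paper's displayed inequality contains a minor typo ($\ell(G)\ell(G)$ for $\ell(G)\ell(G')$); your version writes it correctly.
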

\begin{proof} 
Consider $s,t>0$. We have 
\begin{align*}
        \PPs{X\sim \DP\pa{(s+t) \nu + \eta}}{\EEs{X}{f}\geq u}&=\textstyle\PPs{G,G',D\sim \cG\pa{s \nu}\otimes\cG\pa{t \nu}\otimes\cG\pa{\eta}}{\int (f-u)dD \geq \int (u-f)d(G+G')}
        \\&=\textstyle  \EEs{G,G'\sim \cG\pa{s \nu}\otimes\cG\pa{t \nu}}{\ell\pa{G+G'}},
\end{align*}
where $\ell$ is defined as in Lemma~\ref{lem:mumu}, with the two functions $f-u$ and $u-f$ respectively.
Applying the result of Lemma~\ref{lem:mumu}, we have \begin{align*}\EEs{G,G'\sim \cG\pa{s \nu}\otimes\cG\pa{t \nu}}{\ell\pa{G+G'}}&\geq \EEs{G,G'\sim \cG\pa{s \nu}\otimes\cG\pa{t \nu}}{\ell\pa{G}\ell\pa{G}}\\&=
\PPs{X\sim \DP\pa{s \nu + \eta}}{\EEs{X}{f}\geq u}
\PPs{X\sim \DP\pa{t \nu + \eta}}{\EEs{X}{f}\geq u},
\end{align*}
Where the last equality is because $G \indep G'$. 
\end{proof}

We can now formulate Theorem~\ref{th:devlinoffset}, which provides a first generalization of \eqref{rel:chernoff_(1)_lemma1}, relying on Lemma~\ref{lem:supperadditiveP__} and Fekete’s superadditivity lemma. 
We can interpret this bound as permitting a minor perturbation (played by the distribution $\eta$) around the base distribution $\nu_0$ within $\Kinf$.
This result is indeed sharper than \eqref{rel:chernoff_(1)_lemma1} as $\eta$ being the zero measure is allowed in the supremum. Notice that the optimal \(\eta^*\) is straightforward to determine: given the constraint, it must concentrate its mass on the larger positive values of \( f - u \). In particular, when applied to the Beta distribution, this recovers the result from the earlier version of \cite{henzi2023some}, where a unit mass perturbation (\(\eta = 1\)) was used, under the assumption that \( a \geq 1 \). However, we do not require this assumption --- when \( a < 1 \), we simply adjust the perturbation to be equal to \( a \).

\begin{theorem}
[Unit mass perturbed KL deviation bound] Let $f\in \cC\pa{\Omega}$ and $u\in [f_{\min},f_{\max})$, where $f_{\max}\triangleq \max_{x\in \Omega} f(x)$, $f_{\min}\triangleq \min_{x\in \Omega} f(x)$. Then, 
\[\log \PPs{X\sim\DP(\alpha \nu_0)}{\EEs{X}{f}\geq u}
    \leq - \max_{\substack{{\eta\in \cM(\Omega),} \\ {\eta(\Omega)\leq 1, ~ \eta\leq \alpha \nu_0}}} \pa{\alpha - \eta(\Omega)}\Kinf\pa{\frac{\alpha \nu_0 - \eta}{\alpha - \eta(\Omega)}, u, f}.\]
In addition, a maximizer $\eta^*\in \cM(\Omega)$ is computed as 
\[\eta^* = \left\{
    \begin{array}{ll}
    \alpha\nu_0\pa{~\cdot~\cap A_1} & \mbox{if } \alpha\nu_0(A_1) = 1, 
    \\
    \alpha\nu_0\pa{~\cdot~\cap A_1} + \frac{1-\alpha\nu_0\pa{A_1}}{\nu_0\pa{A_0}}\nu_0\pa{~\cdot~\cap A_0} & \mbox{if } \alpha\nu_0(A_1) < 1,
    \end{array}
\right.\]
where $u_1\triangleq \inf\sset{x \geq u,~\alpha\nu_0\pa{f^{-1}\pa{(x,\infty)}} \leq 1},$  
$A_0=f^{-1}(\sset{u_1})$ and 
$A_1\triangleq f^{-1}\pa{(u_1,\infty)}$.
\label{th:devlinoffset}
\end{theorem}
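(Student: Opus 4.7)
My strategy mirrors the superadditivity-based derivation of \eqref{rel:chernoff_(1)_lemma1} sketched in Section~\ref{sec:introduction}, but with the unit-mass perturbed superadditivity of Lemma~\ref{lem:supperadditiveP__} replacing the simpler one in \eqref{mulpatern}. Fix an admissible perturbation $\eta$ (i.e., $\eta(\Omega) \leq 1$ and $\eta \leq \alpha\nu_0$), write $m \triangleq \eta(\Omega)$, and assume $m < \alpha$; the boundary case $m = \alpha$ forces $\eta = \alpha\nu_0$, for which the claimed bound is the trivial one $0$. Define $\nu \triangleq (\alpha\nu_0 - \eta)/(\alpha - m) \in \cM_1(\Omega)$, chosen precisely so that $(\alpha-m)\nu + \eta = \alpha\nu_0$. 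By Lemma~\ref{lem:supperadditiveP__}, the function
\[\tilde h(t) \triangleq \log \PPs{X\sim\DP(t\nu + \eta)}{\EEs{X}{f}\geq u}\]
is superadditive on $(0,\infty)$, so Fekete's lemma yields $\tilde h(t)/t \leq \lim_{s\to\infty}\tilde h(s)/s$ for every $t > 0$.

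I would then bound this limit by applying Theorem~\ref{thm:gen_ldp_DP} to $\mu_s \triangleq s\nu + \eta$: we have $\mu_s/s \to \nu \in \cM_1(\Omega)$ weakly, and the growth condition $\log \min(\mu_s(A),1) = o(s)$ holds for every $\nu$-continuity set $A$ with non-empty interior. Indeed, since $\nu_0$ has full support we have $\alpha\nu_0(A) > 0$, so either $\nu(A) > 0$ (and $\mu_s(A)\to\infty$) or $\eta(A) = \alpha\nu_0(A) > 0$ (and $\mu_s(A)$ is a positive constant in $s$). The event $\{\mu'\in \cM_1(\Omega) : \EEs{\mu'}{f} \geq u\}$ is weakly closed because $f$ is continuous, so the LDP upper bound gives
\[\lim_{s\to\infty} \frac{\tilde h(s)}{s} \leq -\inf_{\mu':\, \EEs{\mu'}{f}\geq u} \KL{\nu}{\mu'} = -\Kinf(\nu, u, f).\]
Evaluating at $t = \alpha - m$ and using $(\alpha-m)\nu + \eta = \alpha\nu_0$, we conclude
\[\log\PPs{X\sim\DP(\alpha\nu_0)}{\EEs{X}{f} \geq u} = \tilde h(\alpha - m) \leq -(\alpha - m)\Kinf(\nu, u, f),\]
and the first half of the theorem follows by taking the supremum over admissible $\eta$.

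For the explicit maximizer $\eta^*$, I would apply Lemma~\ref{lem:Variational formula for Kinf} to rewrite the objective as
\[(\alpha - m)\Kinf(\nu, u, f) = \max_{\lambda \in [0, 1/(f_{\max}-u)]} \int \log(1-\lambda(f-u))\,d(\alpha\nu_0 - \eta),\]
and exchange the two suprema (a trivial exchange since both are maxima over decoupled sets). For fixed $\lambda > 0$, the inner problem reduces to minimizing $\int \log(1-\lambda(f-u))\,d\eta$ over admissible $\eta$; the integrand is a strictly decreasing function of $f$ and vanishes at $f = u$, so the optimum is bang-bang --- it places mass only on $\{f \geq u\}$, saturates $\eta = \alpha\nu_0$ on the highest super-level sets of $f$, and fills to total mass $\min(1, \alpha\nu_0(f^{-1}([u,\infty))))$. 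Strikingly, this structure is independent of $\lambda$ and matches exactly the stated $\eta^*$, with $u_1$ being the threshold at which the cumulative mass from the top first reaches $1$.

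\textbf{Main obstacle.} The superadditivity-to-bound conversion is clean; the delicate points are the verification of the growth condition in Theorem~\ref{thm:gen_ldp_DP} for the perturbed family (particularly when $\nu$ vanishes on some open set but $\eta$ does not, where the full-support hypothesis on $\nu_0$ is essential), and the case analysis for the threshold $u_1$ in the two regimes separated by whether $\alpha\nu_0(f^{-1}((u,\infty)))$ exceeds $1$.
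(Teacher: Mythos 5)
Your proof takes essentially the same route as the paper's: superadditivity from Lemma~\ref{lem:supperadditiveP__}, Fekete's lemma, the LDP upper bound from Theorem~\ref{thm:gen_ldp_DP} applied to $s\mapsto s\nu+\eta$, the substitution $t=\alpha-\eta(\Omega)$ and $\nu=(\alpha\nu_0-\eta)/(\alpha-\eta(\Omega))$, and finally the variational formula for the maximizer. Your explicit verification of the growth condition $\log\min(\mu_s(A),1)=o(s)$ --- distinguishing the cases $\nu(A)>0$ and $\nu(A)=0,\ \eta(A)>0$ using the full-support assumption on $\nu_0$ --- is a detail the paper leaves implicit, and is a worthwhile addition.
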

\begin{proof}
Using Lemma~\ref{lem:supperadditiveP__}, and Fekete's lemma, we have
\[\lim_{t\to\infty}\log \PPs{X\sim\DP\pa{t \nu + \eta}}{\EEs{X}{f}\geq u}/t = \sup_{t>0}\log \PPs{X\sim\DP\pa{t \nu + \eta}}{\EEs{X}{f}\geq u}/t.\]
On the other hand, from Theorem~\ref{thm:gen_ldp_DP}, we have
\[\lim_{t\to\infty}\log \PPs{X\sim\DP\pa{t \nu + \eta}}{\EEs{X}{f}\geq u}/t \leq -\Kinf\pa{\nu,u,f},\]
so, for all $t>0$, 
\[\log\PPs{X\sim\DP\pa{t \nu + \eta}}{\EEs{X}{f}\geq u} \leq {-t\Kinf\pa{\nu,u,f}}.\]
This is true for all $\eta\in \cM\pa{\Omega}$ such that $\eta\pa{\Omega}\leq 1$, so in particular,
we can consider some $\eta\leq \alpha \nu_0$. 
Then, letting $t$ be $\alpha - \eta\pa{\Omega}$ and $\nu$ be $(\alpha\nu_0 - \eta)/t$, we have
\[\log\PPs{X\sim\DP\pa{\alpha \nu_0}}{\EEs{X}{f}\geq u} \leq {-(\alpha - \eta\pa{\Omega})\Kinf\pa{\frac{\alpha\nu_0 - \eta}{\alpha - \eta\pa{\Omega}},u,f}}.\]
 Taking the $\inf$ over $\eta$, we get the result. 
 
 To get the maximizer $\eta^*$, we use Lemma~\ref{lem:Variational formula for Kinf}:
 \begin{align*}
     &\sup_{\substack{{\eta\in \cM(\Omega),} \\ {\eta(\Omega)\leq 1, ~ \eta\leq \alpha \nu_0}}}(\alpha - \eta\pa{\Omega})\Kinf\pa{\frac{\alpha\nu_0 - \eta}{\alpha - \eta\pa{\Omega}},u,f} \\&= \max_{\lambda\in [0,1/(f_{\max} - u)]} \sup_{\substack{{\eta\in \cM(\Omega),} \\ {\eta(\Omega)\leq 1, ~ \eta\leq \alpha \nu_0}}} \int{\log\pa{1-\lambda(f-u)}}d\pa{\alpha\nu_0 - \eta}
     \\&= \max_{\lambda\in [0,1/(f_{\max} - u)]}\int{\log\pa{1-\lambda(f-u)}}{\alpha d\nu_0} - \underbrace{\inf_{\substack{{\eta\in \cM(\Omega),} \\ {\eta(\Omega)\leq 1, ~ \eta\leq \alpha \nu_0}}} \int \log\pa{1-\lambda(f-u)}d \eta}_{\int \log\pa{1-\lambda(f-u)}d \eta^*},
 \end{align*}
 where the last equality follows from the observation that \( -\log(1 - \lambda (f - u)) \geq 0 \) if and only if \( f \geq u \). This implies that the minimizer \( \eta \) must prioritize placing its mass on larger values of \( f \) (while respecting the unit mass constraint and remaining below \( \alpha \nu_0 \)) and must avoid assigning any mass to values of \( f \) lower than \( u \).
\end{proof}

\subsection{Closed convex sets}

If \( \eta \) has mass 1 and is concentrated at a single point \( x \in \Omega \), we can extend Lemma~\ref{lem:mumu} into Lemma~\ref{lem:Chyp}, 
(and Lemma~\ref{lem:supperadditiveP__} into Lemma~\ref{lem:supperadditiveP_}),
which generalizes the previous multiplicative behavior to deviations toward a closed convex set $C\subset \cM_1(\Omega)$.   
However, the proof breaks down when \( \eta \) is not concentrated at a single point or has a total mass less than 1, because in this case, the function \( \ell \) can no longer be expressed as an expectation of a power function (as in \eqref{lem:supperadditivePbis,,eq:1}), which was previously easier to handle. The approach we take here remains similar but relies on both sides of the exponential distribution, making the multiplicative pattern more challenging to establish.  

Lemma~\ref{lem:Chyp} could also be used to derive a superadditivity property and, consequently, a bound for the deviation toward a closed convex set. However, such a result is weaker than what we obtain with half-spaces. Indeed, the minimizer over $C$ within the KL lies on a supporting hyperplane of both \( C \) and a sublevel set of the KL.\footnote{Since the KL is convex, it cannot have a local minimum
inside $C$ if its global minimum lies outside $C$. Thus, the minimizer is the intersection of 2 convex sets: \( C \) and some closed sublevel set of the KL. We thus get the supporting hyperplane using the geometric Hahn–Banach theorem.} This allows us to bound the probability of belonging to \( C \) by the probability of belonging to the corresponding closed half-space that contains~\(C\). In turn, from Theorem~\ref{th:devlinoffset}, this probability is bounded using the associated \( \Kinf \), which coincides with the infimum of the KL over \( C \). 
Nevertheless, the superadditivity result of Lemma~\ref{lem:supperadditiveP_} remains valuable in its own right. For example, it implies, via Fekete's lemma, the existence of an asymptotic equivalent for the log deviation as the scale tends to \( \infty \).
Moreover, in Remark~\ref{rk:Dirichlet-Multinomial correspondence}, we will explore a special case where superadditivity can be established more directly.

\begin{lemma}\label{lem:Chyp}Let $C\subset \cM_1(\Omega)$ be a closed convex set and
let $x\in \Omega$. We define
$$\ell(\mu)\triangleq\PPs{E\sim \text{Exp}\pa{1}}{\frac{\mu+E\delta_x}{\mu(\Omega)+E}\in C},$$ for $\mu\in \cM(\Omega)$, where $\delta_x$ is the Dirac unit mass concentrated at $x$. Then, for $\mu,\mu'\in \cM(\Omega)$,
$\ell(\mu+\mu')\geq \ell(\mu)\ell(\mu')$.
\end{lemma}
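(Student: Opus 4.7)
My plan is to translate the problem into a Minkowski-sum statement about intervals in $E$-space, and then apply an elementary two-sided inequality for exponentials. The geometry behind this approach is that the map $E \mapsto (\mu+E\delta_x)/(\mu(\Omega)+E)$ traces an affine segment in $\cM_1(\Omega)$, so $C$ will cut out an interval from which one can read off $\ell(\mu)$ in closed form.

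For each $\mu \in \cM(\Omega)$ with $m \triangleq \mu(\Omega) > 0$, I introduce $\phi_\mu(E) \triangleq (\mu + E\delta_x)/(m+E)$, which continuously parametrizes the line segment from $\mu/m$ to $\delta_x$ in $\cM_1(\Omega)$. By convexity and closedness of $C$, the set
\[ I(\mu) \triangleq \sset{E \geq 0 : \phi_\mu(E) \in C} \]
is a closed interval $[a(\mu), b(\mu)]$ (possibly empty, possibly with $b(\mu) = \infty$), and consequently
\[ \ell(\mu) = \int_{I(\mu)} e^{-E}\, dE = e^{-a(\mu)} - e^{-b(\mu)}, \]
with the conventions $e^{-\infty} = 0$ and $\ell(\mu) = 0$ when $I(\mu) = \emptyset$. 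The degenerate cases $\mu(\Omega) = 0$ or $\mu'(\Omega) = 0$ can be handled separately.

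The crux will be the following Minkowski-sum property: whenever $E \in I(\mu)$ and $E' \in I(\mu')$, one has $E + E' \in I(\mu+\mu')$. This follows from the algebraic identity
\[ \phi_{\mu+\mu'}(E+E') = \lambda\, \phi_\mu(E) + (1-\lambda)\, \phi_{\mu'}(E'), \qquad \lambda \triangleq \frac{m+E}{m+m'+E+E'}, \]
which exhibits $\phi_{\mu+\mu'}(E+E')$ as a convex combination of two elements of $C$, hence as an element of $C$ by convexity. Consequently $a(\mu+\mu') \leq a(\mu) + a(\mu')$ and $b(\mu+\mu') \geq b(\mu) + b(\mu')$.

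To conclude, I invoke the elementary inequality (valid for $0 \leq a \leq b$, $0 \leq a' \leq b'$, allowing $b, b' = \infty$):
\[ e^{-(a+a')} - e^{-(b+b')} \geq (e^{-a} - e^{-b})(e^{-a'} - e^{-b'}), \]
whose difference rewrites as $e^{-b'}(e^{-a} - e^{-b}) + e^{-b}(e^{-a'} - e^{-b'}) \geq 0$. Combined with the two-sided endpoint bounds just obtained, this delivers $\ell(\mu+\mu') \geq \ell(\mu)\ell(\mu')$. The main difficulty, as foreshadowed in the paragraph preceding the lemma, is that the deviation event now corresponds to a two-sided interval on $E$, so the one-sided identity $\mathbb{P}(E \geq s+t) = \mathbb{P}(E \geq s)\mathbb{P}(E \geq t)$ that powered Lemma~\ref{lem:mumu} no longer suffices; the Minkowski-sum viewpoint is precisely what makes the two-sided situation tractable, replacing the product-of-tails identity by the elementary two-sided inequality above.
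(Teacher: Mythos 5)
Your proof is correct, and it takes a genuinely cleaner route than the paper's while landing on the same elementary inequality at the end. The paper represents $C$ as an intersection of supporting half-spaces $\{p : \mathbb{E}_p[f] \geq 0\}$, $f \in F_C$; each half-space pulls back along $E \mapsto \phi_\mu(E)$ to a constraint $E \geq -\int f\,d\mu/f(x)$, $E \leq -\int f\,d\mu/f(x)$, or a sign condition, depending on $\mathrm{sign}(f(x))$, so the interval endpoints become $M_1(\mu) = (\sup_{f(x)>0}\dots)^+$ and $M_2(\mu) = \inf_{f(x)<0}\dots$, which are respectively sub- and superadditive because suprema of linear functionals are sublinear. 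The paper then applies Proposition~\ref{prop:quicka_0,a_1,b_0,b_1}, which is exactly your two-sided exponential inequality written in the variables $a_0 = e^{-a}$, $b_0 = e^{-b}$, etc. Your version skips the half-space decomposition entirely: you observe directly that $\phi_\mu$ is a monotone reparametrization of the segment from $\mu/\mu(\Omega)$ to $\delta_x$, so $I(\mu) = \phi_\mu^{-1}(C)$ is a closed interval by convexity and closedness of $C$, and the Minkowski-sum inclusion $I(\mu)+I(\mu') \subseteq I(\mu+\mu')$ drops out of the affine-combination identity $\phi_{\mu+\mu'}(E+E') = \lambda\phi_\mu(E) + (1-\lambda)\phi_{\mu'}(E')$ with no case analysis on signs. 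This is more geometric and arguably more transparent; it also absorbs the paper's separate indicator term $A(\mu)$ (the $f(x)=0$ case) into the empty-interval convention. What the paper's half-space version buys is an explicit closed-form expression for the endpoints $M_1, M_2$, which is convenient if one wants to compute the bound; yours establishes the sub/superadditivity abstractly without producing those formulas.
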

\begin{proof}We represent $C$ as the intersection of its supporting half-spaces \cite{boyd}:
     There is a set $F_C\subset \cC(\Omega)$ such that $p\in C \iff \forall f\in F_C$, $\EEs{p}{f}\geq 0$.
     Therefore, $(\mu+E\delta_x)/(\mu(\Omega)+E)\in C $ if and only if for all $ f\in F_C,$ one of the following is true
    \begin{align*}
       E\geq  -\frac{\int f d\mu}{f(x)}\quad\mbox{if }f(x)>0,\quad\quad&
        \textstyle{\int f d\mu}\geq 0\quad\mbox{if }f(x)=0,\quad\quad&
        E\leq  -\frac{\int f d\mu}{f(x)}\quad\mbox{if }f(x)<0.
    \end{align*}
So, with the convention that the infimum (resp. supremum) over the empty set is $\infty$ (resp. $-\infty$), we define 
\begin{align*}
    A(\mu)&\triangleq \textstyle\II{{\int f d\mu\geq 0~\forall f \in F_C \mbox{ s.t. }f(x)=0}},\\
    M_1(\mu)&\triangleq \pa{\sup_{f\in F_C,~f(x)>0}-\frac{\int f d\mu}{f(x)}}^+,\quad\quad
    M_2(\mu)\triangleq \inf_{f\in F_C,~f(x)<0}-\frac{\int f d\mu}{f(x)},
\end{align*}
and get
$$\ell(\mu)={A(\mu)}\PPs{E\sim \text{Exp}\pa{1}}{M_1(\mu)\leq E \leq M_2(\mu)} = A(\mu)\pa{
e^{-M_{1}\pa{ \mu }}
-
e^{-M_{2}\pa{ \mu }}
}^+.$$
Clearly, $A(\mu+\mu')\geq A(\mu)A(\mu')$. On the other hand, as $M_1$ is subadditive and $M_2$ is superadditive, 
we have
\[\pa{
e^{-M_{1}\pa{ \mu + \mu' }}
-
e^{-M_{2}\pa{ \mu + \mu' }}
}^+ \geq \pa{
e^{-M_{1}\pa{\mu }}
e^{-M_{1}\pa{\mu' }}
-
e^{-M_{2}\pa{\mu }}
e^{-M_{2}\pa{\mu' }}
}^+,\]
which is itself, thanks to Proposition~\ref{prop:quicka_0,a_1,b_0,b_1}, lower bounded by
\[ 
\pa{
e^{-M_{1}\pa{\mu }}
-
e^{-M_{2}\pa{\mu }}
}^+
\pa{
e^{-M_{1}\pa{\mu' }}
-
e^{-M_{2}\pa{\mu' }}
}^+.
\]
\end{proof}


\begin{lemma}[Supperaditivity for convex sets with a unit mass perturbation] Let $C$ be a closed convex set of $\cM_1\pa{\Omega}$. Let $\nu\in \cM_1\pa{\Omega}$ and $x\in\Omega$.  Then, the functions \[t\mapsto \log \PPs{\DP\pa{t\nu+\delta_x}}{C}, \quad t\mapsto \log \PPs{\DP\pa{t\nu}}{C} \]
are superadditive on $(0,\infty)$, where $\delta_x$ is the Dirac unit mass concentrated at $x$.
\label{lem:supperadditiveP_}
\end{lemma}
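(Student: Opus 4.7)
My plan is to handle the two functions separately: the first follows the template of Lemma~\ref{lem:supperadditiveP__}, but with Lemma~\ref{lem:Chyp} replacing Lemma~\ref{lem:mumu}, while the second turns out to be a direct consequence of the convexity of $C$.

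For the perturbed function, let $s,t>0$. Using independent increments of the Gamma process and the fact that $\cG(\delta_x)\stackrel{d}{=}E\delta_x$ with $E\sim\text{Exp}(1)$ (because $\text{Gamma}(1,1)=\text{Exp}(1)$), I would write
\[
\DP((s+t)\nu+\delta_x) \stackrel{d}{=} \frac{G+G'+E\delta_x}{G(\Omega)+G'(\Omega)+E},
\]
where $G\sim\cG(s\nu)$, $G'\sim\cG(t\nu)$, $E\sim\text{Exp}(1)$ are mutually independent. Conditioning on $(G,G')$ and integrating out $E$, the probability that this random measure lies in $C$ equals $\EE_{G,G'}[\ell(G+G')]$, where $\ell$ is exactly the function defined in Lemma~\ref{lem:Chyp}. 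That lemma gives $\ell(G+G')\geq \ell(G)\ell(G')$, and by independence of $G$ and $G'$,
\[
\EEs{G,G'}{\ell(G+G')}\;\geq\;\EEs{G}{\ell(G)}\cdot\EEs{G'}{\ell(G')}\;=\;\PPs{\DP(s\nu+\delta_x)}{C}\cdot\PPs{\DP(t\nu+\delta_x)}{C}.
\]
Taking logarithms yields the superadditivity of the first function.

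For the unperturbed function, I avoid Lemma~\ref{lem:Chyp} altogether and exploit the convexity of $C$ directly. Take independent $G_1\sim\cG(s\nu)$ and $G_2\sim\cG(t\nu)$, so that $G_1+G_2\sim\cG((s+t)\nu)$, and set $X_i=G_i/G_i(\Omega)$ together with $W=G_1(\Omega)/(G_1(\Omega)+G_2(\Omega))$. Then the normalized sum equals $WX_1+(1-W)X_2$, and the classical independence of $G_i/G_i(\Omega)$ from $G_i(\Omega)$ recalled in the introduction, combined with $G_1\indep G_2$, implies that $(X_1,X_2,W)$ are mutually independent with $X_1\sim\DP(s\nu)$, $X_2\sim\DP(t\nu)$ and $W\sim\text{Beta}(s,t)$. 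Since $C$ is convex, $\{X_1\in C\}\cap\{X_2\in C\}\subset\{WX_1+(1-W)X_2\in C\}$, hence
\[
\PPs{\DP((s+t)\nu)}{C}\;\geq\;\PP{X_1\in C}\,\PP{X_2\in C},
\]
and superadditivity again follows by taking logarithms.

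The technical core has already been absorbed into Lemma~\ref{lem:Chyp}; what remains is Gamma-process bookkeeping plus, in the unperturbed case, a one-line convexity argument, with only routine measurability and Fubini verifications implicit in conditioning on $(G,G')$. The interesting structural contrast to note is that the unperturbed case enjoys a free mixture decomposition $WX_1+(1-W)X_2$, which is unavailable in the perturbed case precisely because the shared perturbation $\delta_x$ forbids splitting $E$ between the two halves, forcing us to rely on the more delicate half-plane-by-half-plane estimate of Lemma~\ref{lem:Chyp}.
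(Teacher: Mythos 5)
Your proof is correct and follows essentially the same approach as the paper: Lemma~\ref{lem:Chyp} plus the Gamma representation for the perturbed function, and the direct convexity argument (that $X_1,X_2\in C$ forces the convex combination $(G+G')/(G(\Omega)+G'(\Omega))\in C$) together with $X_1\indep X_2$ for the unperturbed one. The extra observations you make — that $(X_1,X_2,W)$ are in fact mutually independent, and that the mixture decomposition has no analogue in the perturbed case because the single exponential $E$ cannot be split — are accurate but not needed for the argument.
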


\begin{proof}
Consider $s,t>0$. We use the Gamma process representation, considering ${G,G',E\sim \cG\pa{s \nu}\otimes\cG\pa{t \nu}\otimes\text{Exp}(1)}.$ 

The proof for the first function 
    relies on Lemma~\ref{lem:Chyp}. Indeed, \begin{align*}\PPs{\DP\pa{(s+t)\nu+\delta_x}}{C} &= \PPs{G,G',E\sim \cG\pa{s \nu}\otimes\cG\pa{t \nu}\otimes\text{Exp}(1)}{\frac{G+G'+E\delta_x}{G(\Omega)+G'(\Omega)+E}\in C}\\
    &=\EEs{G,G' \sim \cG\pa{s \nu}\otimes\cG\pa{t \nu}}{\ell\pa{G+G'}} \\&\geq \EEs{G,G' \sim \cG\pa{s \nu}\otimes\cG\pa{t \nu}}{\ell\pa{G}\ell\pa{G'}}=\PPs{\DP\pa{s\nu+\delta_x}}{C}\PPs{\DP\pa{t\nu+\delta_x}}{C},\end{align*}
where $\ell$ is defined as in Lemma~\ref{lem:Chyp}.
    
For the second function, we can directly obtain the result effortlessly, leveraging the convexity of \( C \). Specifically, if \( G/G(\Omega) \) and \( G'/G'(\Omega) \) belong to \( C \), it follows that \( (G+G')/(G(\Omega)+G'(\Omega)) \in C \).
\end{proof}

\begin{remark}[Link with the Dirichlet-Multinomial correspondence]\label{rk:Dirichlet-Multinomial correspondence}
A rather simple scenario is the case of a Dirichlet distribution (i.e., when $\Omega=[d]$ for some $d\in \N^*$). In this context, we can consider the result of Lemma~\ref{lem:supperadditiveP_}, with the convex set $C=C_p$, for some fixed $p\in \cM_1\pa{[d]}$, defined as \[\textstyle C_p\triangleq\sset{q\in\cM_1\pa{[d]},~ \forall i \in [d-1],~\sum_{j>i}\pa{q_j - p_j} \geq 0}.\]
This specific instance is interesting when $\alpha \nu_0\pa{\sset{i}} \in \N$ for all $i\in [d]$, and $\alpha \nu_0\pa{\sset{d}} \geq 1$, because an alternative method for proving superadditivity emerges, involving the use of the multinomial distribution. 
More precisely, based on the following Fact~\ref{fact:Dirichlet-Multinomial correspondence}, for $x=(\alpha \nu_0 - \delta_d)/\pa{\alpha - 1}$ and for $\delta_d\in \cM_1([d])$ being the Dirac unit mass concentrated at the last index $d\in [d]$, the function
\(n\to\log \PPs{\DP(n x + \delta_d)}{C_p}\)
is superadditive for all $n\in \N$ such that $nx\in \N^d$. Indeed, using the representation of the multinomial random variable as a sum of categorical random variables and letting $(X_{\ell})_{\ell\in \N}\overset{iid}{\sim}\text{Cat}(p)$, we have
\begin{align*}
    \PPs{\DP((n+m) x + \delta_d)}{C_p} &= \textstyle\PP{\forall i\in [d-1],~ \sum_{j\in [i]} \sum_{\ell \in [n+m]} \pa{X_{j,\ell} - x_j} \geq 0 }\\&\geq  \PPs{\DP(n x + \delta_d)}{C_p} \PPs{\DP(m x + \delta_d)}{C_p}.
\end{align*}
\noindent
We thus have, from the superadditive lemma and Theorem~\ref{thm:gen_ldp_DP}, that $$\frac{1}{n}\log \PPs{\DP(n x + \delta_d)}{C_p}\leq \lim_{n\to \infty} \frac{1}{n}\log \PPs{\DP(n x + \delta_d)}{C_p} \leq - \inf_{\mu\in C_p}\KL{x}{\mu},$$ for all $n\in \N$ such that $nx\in \N^d$.
Finally, by setting \( n = \alpha - 1 \) and multiplying both sides by \( n \), we recover a bound where the perturbation is incorporated into the KL term.
\end{remark}

\begin{fact}[Dirichlet-Multinomial correspondence \cite{Chafa__2009}]
\label{fact:Dirichlet-Multinomial correspondence}
Let $p\in \cM_1\pa{[d]}$ and $n\in \N$. We consider a sequence of integers $k_0=0\leq k_1\leq \dots\leq k_{d-1}\leq n \leq k_{d} = n+1$. Let $M\sim\text{Multinomial}(n,p)$ and $D\sim \text{Dir}((k_{i} - k_{i-1})_{i\in [d]})$. Then, 
\[\textstyle\PP{\forall i \in [d-1],~\sum_{j\in [i]} M_j \geq k_i} = \PP{\forall i \in [d-1],~\sum_{j>i} D_j \geq \sum_{j>i} p_j}.\]
\end{fact}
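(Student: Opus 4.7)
The plan is to couple both sides of the identity to a common probability space built from $n+1$ i.i.d.\ $\mathrm{Exp}(1)$ variables, and to show that both events reduce to the same inequalities on the order statistics of $n$ i.i.d.\ uniforms on $[0,1]$.

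First, I would build the Dirichlet vector via the Gamma representation. Letting $E_1,\dots,E_{n+1}$ be i.i.d.\ $\mathrm{Exp}(1)$ and $S_k\triangleq\sum_{j\le k}E_j$, the partial gaps $(S_{k_i}-S_{k_{i-1}})_{i\in[d]}$ are independent $\Gamma(k_i-k_{i-1},1)$ variables (with the convention that a zero parameter gives a zero increment a.s., so degenerate coordinates cause no trouble), and therefore $D\triangleq \bigl((S_{k_i}-S_{k_{i-1}})/S_{n+1}\bigr)_{i\in[d]}$ has the prescribed Dirichlet law. The crucial classical identity I would invoke is that the normalized partial sums $(S_k/S_{n+1})_{k=1}^{n}$ have the same joint law as the order statistics $U_{(1)}\le\cdots\le U_{(n)}$ of $n$ i.i.d.\ uniforms on $[0,1]$. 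Writing $q_i\triangleq\sum_{j\le i}p_j$, and using $\sum_{j>i}D_j=1-S_{k_i}/S_{n+1}$ and $\sum_{j>i}p_j=1-q_i$, the Dirichlet event rewrites as $\{U_{(k_i)}\le q_i,\ \forall\, i\in[d-1]\}$.

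Next, I would realize $M$ on the \emph{same} uniforms $U_1,\dots,U_n$ via the standard categorical coupling: set $M_j\triangleq\#\{\ell:U_\ell\in[q_{j-1},q_j)\}$, which gives $M\sim\mathrm{Multinomial}(n,p)$ and $\sum_{j\le i}M_j=\#\{\ell:U_\ell<q_i\}$. The multinomial event $\sum_{j\le i}M_j\ge k_i$ then amounts to $U_{(k_i)}<q_i$, which agrees with $U_{(k_i)}\le q_i$ up to a null set by continuity of the uniform law. Hence both events coincide almost surely on the common space, and their probabilities are equal.

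The main obstacle is essentially bookkeeping: aligning the two parametrizations so that the $k_i$-th order statistic plays the pivotal role on each side, and handling the boundary cases cleanly (degenerate zero-parameter coordinates of the Dirichlet on one side, and strict vs.\ non-strict inequalities arising from left-closed intervals on the other). Both are benign once one observes that the exponential/uniform coupling is continuous and that the prescribed $k_d=n+1$ only ensures a strictly positive last Dirichlet parameter without entering the event.
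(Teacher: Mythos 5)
Your proof is correct. The paper states this as a cited fact (\cite{Chafa__2009}) without supplying its own proof, so there is nothing to compare against. Your argument is a clean and complete proof via the standard coupling: realize the Dirichlet through the Gamma/exponential representation so that, writing $q_i = \sum_{j\le i}p_j$, the Dirichlet event reduces to $\{S_{k_i}/S_{n+1}\le q_i,\ \forall i\in[d-1]\}$; invoke the classical identity $\bigl(S_k/S_{n+1}\bigr)_{k=1}^n \overset{d}{=} \bigl(U_{(k)}\bigr)_{k=1}^n$; realize the multinomial from the same uniforms via $M_j=\#\{\ell: U_\ell\in[q_{j-1},q_j)\}$ so that $\sum_{j\le i}M_j\ge k_i$ becomes $U_{(k_i)}<q_i$; and then note that the strict and non-strict inequalities differ only on a null set since the uniform law is atomless. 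Your handling of the boundary cases ($k_i=0$, zero Dirichlet parameters, $k_d=n+1$ staying outside the event) is also correct. One small presentational point: you are not literally coupling $D$ and $M$ on the same probability space, since $D$ is built from the exponentials and $M$ from the uniforms; rather, you compute each probability in terms of the same functional of order statistics and equate the two expressions. This is perfectly rigorous as written, but the phrase ``coincide almost surely on the common space'' slightly overstates the construction.
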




\section{Beyond unit mass perturbation}
\label{sec:ex}
In this section, we aim to investigate perturbations exceeding the unit mass for the Beta distribution, the Dirichlet distribution, and the DP. 
\subsection{Beta distribution}
We consider the Beta distribution \(\text{Beta}(a, b),\) with parameters \( a, b > 0 \).
We begin by stating a more general Beta bound than that of \cite{henzi2023some} in Lemma~\ref{lem:citephenzi2023some}. Indeed, their approach does not explicitly isolate a perturbation \( \eta \); instead, they prove the result by directly substituting the value \( \eta = 1 + \frac{a - 1}{b + 1} \). In contrast, we allow \( \eta \) to vary within \( [0, \min(a, 1 + \frac{a - 1}{b + 1})] \), 
which enables us to remove the constraint that \( a \) must be greater than 1. We go even further: through a more refined analysis, we establish that the upper bound of the interval for $\eta$ can depend not only on \( a \) and \( b \) but also on \( u \), enabling a significantly larger perturbation.
\begin{lemma}[Beyond unit mass perturbation for the Beta distribution]
Let $a>0$, $b > 0$, $u\in [0,1]$, $S_i$ for $i\in \sset{0,1,2,\infty}$ as defined in Proposition~\ref{prop:nonincreasing} and $\eta\in [0,\min(a,S_{\infty}(a,b,u))]$. We consider a sample $B\sim\text{Beta}(a,b)$. Then, if $u> (a-\eta)/(a+b-\eta)$,
\[\log\PPs{}{B\geq u} - \log\PP{B\geq (a-\eta)/(a+b-\eta)} \leq -(a+b-\eta)\kl{(a-\eta)/(a+b-\eta)}{u}.\]
In particular, we can take
$$
\eta = \left\{
    \begin{array}{ll}
        S_{i}(a,b,u) & \mbox{if } a\geq 1 \\
        a & \mbox{otherwise,}
    \end{array}
\right.
$$
for all $i\in \sset{0,1,2,\infty}$, depending on whether we prioritize optimality (\( i \in \{2, \infty\} \)) or the simplicity of formulation and computation (\( i \in \{0, 1\} \)).
\label{lem:citephenzi2023some}
\end{lemma}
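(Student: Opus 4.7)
The inequality can be recast as $\psi_\eta(u) \leq \psi_\eta(m_\eta)$, where I set $m_\eta \triangleq (a - \eta)/(a + b - \eta)$, $F(v) \triangleq \PPs{}{B \geq v}$ and $\psi_\eta(v) \triangleq \log F(v) + (a + b - \eta)\, \kl{m_\eta}{v}$, since $\kl{m_\eta}{m_\eta} = 0$. Hence it suffices to show that $\psi_\eta$ is non-increasing on $[m_\eta, u]$. This is exactly the content of Proposition~\ref{prop:nonincreasing} under the hypothesis $\eta \leq S_\infty(a, b, u)$, and the displayed bound then follows by evaluating $\psi_\eta$ at the two endpoints.

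For the explicit choices at the end of the lemma I would split into two cases. When $a \geq 1$, the chain $S_0 \leq S_1 \leq S_2 \leq S_\infty$ together with $S_i \leq a$ (both asserted in Proposition~\ref{prop:nonincreasing}, and consistent with the observation that the Henzi et al.\ value $1 + (a-1)/(b+1)$ is itself $\leq a$ whenever $a \geq 1$) places $\eta = S_i(a,b,u)$ inside the admissible interval $[0, \min(a, S_\infty(a,b,u))]$ for every $i \in \{0,1,2,\infty\}$. When $a < 1$, I would take $\eta = a$: then $m_\eta = 0$ and the claimed inequality collapses to $\log F(u) \leq b \log(1 - u)$, i.e.\ $F(u) \leq (1-u)^b$. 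This is the stochastic-dominance statement $\mathrm{Beta}(a, b) \preceq_{\mathrm{st}} \mathrm{Beta}(1, b)$, valid for every $a \leq 1$, whose right-hand side has survival function $(1-u)^b$.

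The genuine technical work, and hence the main obstacle, is therefore concealed in Proposition~\ref{prop:nonincreasing}. Differentiating $\psi_\eta$ and clearing denominators reduces the required monotonicity to the pointwise inequality
$$
\Phi(v) \triangleq v^a (1-v)^b - G(v)\bigl[(a + b - \eta)\, v - (a - \eta)\bigr] \geq 0, \quad v \in [m_\eta, u],
$$
where $G(v) \triangleq \int_v^1 t^{a-1}(1-t)^{b-1}\,dt$ is the unnormalized upper incomplete Beta function. A direct computation yields $\Phi'(v) = \eta\, v^{a-1}(1-v)^b - (a + b - \eta)\, G(v)$, which has the same structural shape as $\Phi$ itself; this suggests an iterative scheme obtained by repeated integration by parts on $G$ (equivalently the recurrence for the incomplete Beta), whose successive fixed-point conditions on $\eta$ one would identify with $S_0, S_1, S_2$ and, in the limit, $S_\infty$. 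Calibrating the hazard rate $v^{a-1}(1-v)^{b-1}/G(v)$ uniformly on $[m_\eta, u]$ while allowing $\eta$ to depend on $u$ is the delicate point, and is exactly what forces the $u$-dependence of the admissible range for $\eta$ that sharpens the Henzi et al.\ statement.
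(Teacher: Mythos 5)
Your opening reduction is fine: writing $\psi_\eta(v) = \log F(v) + (a+b-\eta)\,\kl{m_\eta}{v}$, the claimed bound is precisely $\psi_\eta(u) \leq \psi_\eta(m_\eta)$, and your algebra identifying $\psi_\eta' \leq 0$ with $\Phi(v) = v^a(1-v)^b - G(v)[(a+b-\eta)v - (a-\eta)] \geq 0$ and $\Phi'(v) = \eta v^{a-1}(1-v)^b - (a+b-\eta)G(v)$ is correct. The degenerate case $a<1$, $\eta=a$ via $\mathrm{Beta}(a,b)\preceq_{\mathrm{st}}\mathrm{Beta}(1,b)$ is also a clean observation. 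However, the core of the proposal has a genuine gap in two respects.

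First, "it suffices to show $\psi_\eta$ is non-increasing on $[m_\eta, u]$" overshoots the target, and it is not what the paper proves. Writing $h(v)=F(v)$, $g(v)=F(m_\eta)\exp(-(a+b-\eta)\kl{m_\eta}{v})$ and $J=g'/h'$, one has $\psi_\eta = \log(h/g) + \mathrm{const}$, so $\psi_\eta' \leq 0 \iff g \geq Jh$. The paper's argument instead exploits $g(m_\eta)=h(m_\eta)$ and $g(1)=h(1)=0$ and analyzes the sign pattern of $1-J$ on $(m_\eta,1)$: $1-J$ either stays nonnegative on $(m_\eta,u)$ or changes sign exactly once there and not on $(u,1)$, from which the endpoint inequality $g(u)\geq h(u)$ follows by integrating $(g-h)'=(J-1)h'$ over $(m_\eta,u)$ or over $(u,1)$ as appropriate. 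That argument yields $g\geq h$ on $[m_\eta,1]$, which is strictly weaker than $g\geq Jh$ on portions of the interval where $J>1$ (a regime that genuinely occurs when $\eta > (a+b)/(b+1)$ and $J(1)>1$). In other words, $\psi_\eta$ need not be monotone; the lemma only requires the endpoint comparison, which is exactly what the paper establishes and your $\Phi\geq 0$ pointwise condition would not.

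Second, "This is exactly the content of Proposition~\ref{prop:nonincreasing}" misattributes the technical content. That proposition is entirely about the auxiliary quantity $R(\eta) = \frac{u-m_\eta}{1-m_\eta}u^{-\eta}-1$: it locates its root $S_\infty$ via the Lambert $W$ function, verifies $R\leq 0$ on $[0,S_\infty]$, establishes the chain $S_0\leq S_1\leq S_2\leq S_\infty\leq a$ (for $a\geq 1$) and the monotonicity of $S_\infty(a,b,\cdot)$. It says nothing about monotonicity of $\psi_\eta$ or nonnegativity of $\Phi$. Its role in the paper is narrower: in the single hardest subcase ($\eta>1/(1-m_\eta)$ and $J(1)\leq 1$) it is invoked to bound $J(u)\leq \frac{u-m_\eta}{1-m_\eta}u^{-\eta}\leq 1$, from which one deduces $u$ lies below the argmax of $J$ and hence $1-J\geq 0$ on $[m_\eta,u]$. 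The iterative/integration-by-parts scheme you sketch for attacking $\Phi\geq 0$ directly is, as you yourself note, the main obstacle and is not carried out; as written the proposal therefore does not constitute a proof of the lemma.
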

\begin{proof} 
Let $B\sim\text{Beta}\pa{tx + \eta, t(1-x)}$, with $t=a+b-\eta$ and $x=\frac{a-\eta}{a+b-\eta}$. We assume that $u>x$.
Consider $g(v)\triangleq \PP{B \geq x}\exp\pa{-t \kl{x}{v}}$ and $h(v)\triangleq \PP{B \geq v}$. We have $g(x)=h(x)$ and $g(1)=h(1)=0$. Moreover, 
\begin{align*}h'(v) &= \frac{-\Gamma\pa{t+\eta}}{\Gamma\pa{tx + \eta}\Gamma\pa{t(1-x)}}v^{tx + \eta -1}(1-v)^{t(1-x) - 1}\leq 0,\\g'(v) &= \PP{B \geq x} t\pa{\frac{v}{x}}^{-\eta}\pa{ \frac{1-v}{1-x}- \frac{v}{x} }\pa{\frac{v}{x}}^{tx + \eta - 1} \pa{\frac{1-v}{1-x}}^{t(1-x) - 1} \\&= \underbrace{\PP{B \geq x} \frac{tx^{\eta - 1}\Gamma\pa{tx + \eta}\Gamma\pa{t(1-x)}}{(1-x)\Gamma\pa{t+\eta}} {v}^{-\eta}\pa{v - {x}{}}}_{J(v)} h'(v),\\J'(v) &= \PP{B \geq x}  \frac{tx^{\eta - 1}\Gamma\pa{tx + \eta}\Gamma\pa{t(1-x)}}{(1-x)\Gamma\pa{t+\eta}}\pa{1+\eta\pa{\frac{x}{v} - 1} }v^{-\eta}.
\end{align*}
We claim that the proof is complete if we establish that \( 1 - J \) either maintains a constant sign on \( (x, u) \) or changes sign exactly once on \( (x, u) \) but not on \( (u,1) \). In the first case, we obtain  
\[\textstyle
g(u)-h(u) = \int_{x}^u \pa{1-J(v)}(-h'(v))dv \geq 0,
\]  
while in the second case, we have  
\[\textstyle
g(u)-h(u) = \int_{u}^1 \pa{J(v)-1}\pa{-h'(v)}dv \geq 0.
\]
If $\eta \in [0, 1/(1-x)]$, then we see that $J'\geq 0$, so \( J - 1 \) changes its sign at most once on \( (x,1) \). If $\eta>1/(1-x)$, \( J \) increases from \( 0 \) to a maximum over the segment $[x,\frac{x}{1-1/\eta}]$ and then decreases on $[\frac{x}{1-1/\eta}, 1]$. If \( J(1) > 1 \), then again \( J - 1 \) changes its sign at most once on \( (x,1) \). Now, if $J(1)\leq 1$, we have $J(u)=J(1)\frac{u-x}{1-x}u^{-\eta}\leq \frac{u-x}{1-x}u^{-\eta},$ which is upper bounded by~$1$ from Proposition~\ref{prop:nonincreasing}. Plus, isolating $x$, we get $\frac{u-u^{\eta}}{1-u^{\eta}} \leq x$. Thus, from the inequality $u^{1-\eta}+(\eta-1)u-\eta \geq 0$ (obtained by noting that the derivative with respect to \( u \) is negative), we get 
\(
 (1-1/\eta)u \leq \frac{u-u^{\eta}}{1-u^{\eta}} \leq x ,
\)
so $u\in [x,\frac{x}{1-1/\eta}]$, i.e., $1-J\geq 0$  on $[x,u]$.
\end{proof}

\begin{corollary}
\label{cor:citephenzi2023some}
Let $u,v,w\in  [0,1],$ with $v>u$. Let $S_i$ for $i\in \sset{0,1,2,\infty}$ as defined in Proposition~\ref{prop:nonincreasing}.
Let $\eta\in [0,S_\infty(a,b,u/v)].$ From point $(v)$ of Proposition~\ref{prop:nonincreasing}, since $\frac{(u-w)^+}{v-u+(u-w)^+}\leq \frac{u}{v}$, we have  $S_\infty\pa{a,b,\frac{(u-w)^+}{v-u+(u-w)^+}}\geq S_\infty\pa{a,b,\frac{u}{v}}$. Thus, from Lemma~\ref{lem:citephenzi2023some},
\begin{align*}\log\PP{Bv + (1-B)w \geq u} &= \log\PP{B \geq {\frac{(u-w)^+}{v-u+(u-w)^+}}} \\&\leq -(a+b-\eta)\kl{\frac{a-\eta}{a+b-\eta}}{{ \frac{(u-w)^+}{v-u+(u-w)^+}}} \\&=-(a+b-\eta)\Kinf\pa{\begin{pmatrix} \frac{a-\eta}{a+b-\eta} \\ \frac{b}{a+b-\eta} \end{pmatrix} , u, \begin{pmatrix}   v \\ w \end{pmatrix}}.\end{align*}
\end{corollary}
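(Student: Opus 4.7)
The plan is to reduce the event $\{Bv+(1-B)w \geq u\}$ to an event on $B$ alone, apply Lemma~\ref{lem:citephenzi2023some} at the resulting shifted threshold, and finally identify the binary KL so produced with the two-dimensional $\Kinf$.

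First, I would establish the equality $\PP{Bv+(1-B)w \geq u} = \PP{B \geq (u-w)^+/(v-u+(u-w)^+)}$ by rewriting $Bv+(1-B)w = w + B(v-w)$. When $w < u$ --- the only nontrivial case, since otherwise the threshold equals $0$ and both sides equal $1$ --- the condition $Bv+(1-B)w \geq u$ is equivalent to $B \geq (u-w)/(v-w)$; substituting $v-w = (v-u) + (u-w)$ yields the stated form.

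Second, I would verify that the given $\eta$ satisfies the hypothesis of Lemma~\ref{lem:citephenzi2023some} at the new threshold $\tilde u := (u-w)^+/(v-u+(u-w)^+)$. The elementary inequality $\tilde u \leq u/v$ reduces to $v(u-w) \leq u(v-w)$, that is, to $u \leq v$, which holds by hypothesis. The monotonicity statement in Proposition~\ref{prop:nonincreasing}(v) then gives $S_\infty(a,b,\tilde u) \geq S_\infty(a,b,u/v) \geq \eta$, so Lemma~\ref{lem:citephenzi2023some} applies and delivers the upper bound $-(a+b-\eta)\kl{(a-\eta)/(a+b-\eta)}{\tilde u}$.

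Third, I would identify this binary KL with the two-dimensional $\Kinf$. Setting $\nu = (a-\eta)/(a+b-\eta)$, the problem $\inf \{\KL{(\nu,1-\nu)}{(\mu_1,\mu_2)} : \mu_1+\mu_2=1,\ \mu_1 v + \mu_2 w \geq u\}$ becomes one-dimensional through $\mu_2 = 1-\mu_1$, with linear constraint $\mu_1 \geq \tilde u$. In the regime $\tilde u > \nu$ in which Lemma~\ref{lem:citephenzi2023some} is invoked, the map $p \mapsto \kl{\nu}{p}$ is strictly increasing for $p \geq \nu$, so the infimum is attained at $\mu_1 = \tilde u$ and equals $\kl{\nu}{\tilde u}$, yielding the final equality of the corollary.

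I do not foresee any real obstacle: each step is either elementary algebra or a direct appeal to Lemma~\ref{lem:citephenzi2023some} and Proposition~\ref{prop:nonincreasing}. The only mildly delicate point is the degenerate case $w \geq u$, in which $\tilde u = 0$ and the bound must be interpreted with both probabilities equal to $1$ and the KL term vanishing; a brief remark disposes of it.
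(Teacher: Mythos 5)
Your proposal follows the same route as the paper's inline derivation: rewrite the event $\{Bv+(1-B)w\ge u\}$ as a threshold event on $B$ alone, invoke Lemma~\ref{lem:citephenzi2023some} at the shifted threshold $\frac{(u-w)^+}{v-u+(u-w)^+}$ after confirming admissibility of $\eta$ via the monotonicity of $S_\infty$ from Proposition~\ref{prop:nonincreasing}(v), and identify the resulting binary KL with the two-point $\Kinf$ by minimizing $\mu_1\mapsto\kl{\nu}{\mu_1}$ over the half-line constraint. One small correction to your closing remark on the degenerate case $w\ge u$: there the threshold is $0$ and, for $\eta<a$, the binary divergence $\kl{\frac{a-\eta}{a+b-\eta}}{0}$ is $+\infty$ rather than zero, so only the two endpoints of the displayed chain (the log-probability and the $\Kinf$ term) actually vanish while the middle term is ill-posed; in that case the corollary should simply be read as the direct inequality $0\le -(a+b-\eta)\Kinf(\cdot)=0$, a point the paper's statement also leaves implicit.
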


\subsection{Dirichlet process}
Lemma~\ref{lem:citephenzi2023some} and Corollary~\ref{cor:citephenzi2023some} serves as our starting point for extending the result to DPs. The intuition for moving on to the generalization (stated in Theorem~\ref{thm:beyound1offset}) is to consider the measurable partition $\Omega = I\sqcup J$, where $I$ is the support of the perturbation. Then, consider the beta sample defined by 
$B=X(I)$, where $X\sim\DP\pa{\alpha \nu_0}$, 
and see that $\EEs{X}{f}=B \EEccs{X}{f}{I} + (1-B)\EEccs{X}{f}{J}$. The trick is then to see that $B$ and $\EEccs{X}{f}{I}$ (resp. $\EEccs{X}{f}{J}$) are independent (so that the previous lemma can be used). Indeed, if we use $G\sim \cG\pa{\alpha \nu_0}$ to generate $X=G/G(\Omega)$, then it's known that $G\pa{\cdot \cap I}/G\pa{I}$ (resp. $G\pa{\cdot \cap J}/G\pa{J}$) is independent of $(G\pa{I},G(J))$ \cite{lukacs1955characterization}. Then, what we obtain is an expectation (over some non-perturbed DP) of some exponential bound, with a binary KL as a rate function. Leveraging the connection between \( \Kinf \) and the MGF of the Gamma process, we can convert this into the desired  \( \Kinf \) exponential bound.

In Theorem~\ref{thm:beyound1offset}, we focus on perturbations that are essentially singletons (even if they involve multiple points, the possible values of \( f \) remain the same). In contrast, Theorem~\ref{th:devlinoffset} allows the perturbation to be spread out. The reason for this restriction is similar to what we encountered in the convex case. Specifically, when the perturbation is spread out, a single value \( v \) is replaced by an expectation of \( f - u \) throughout the proof. In the convex case, this prevented us from leveraging the multiplicative behavior, as it was difficult to pass it through this expectation. Here, this prevents us from fully eliminating the perturbation, as the distribution in the expectation of \( f - u \) depends on the perturbation.
Our approach to overcome this is to exploit the fact that when there is only one possible value, all expectations coincide. This allows us to replace the expectation involving the perturbation with that of a non-perturbed DP.

An alternative approach would be to first upper bound the log deviation by considering a perturbation concentrated at a single higher value. However, this would also affect how we apply the previous lemma. Indeed, the total allowed mass is constrained by a function of the mass of the non-perturbed DP parameter at the concentration point --- the larger this mass, the better the bound. This means we cannot arbitrarily shift the perturbation’s mass without altering how the lemma applies, which would disrupt the result we aim to establish.

\begin{theorem}
[DP deviation bound: beyond unit mass perturbation] 
Consider $f\in\cC\pa{\Omega}$ and $u\in [f_{\min},f_{\max})$, where $f_{\max}\triangleq \max_{x\in \Omega} f(x)$, $f_{\min}\triangleq \min_{x\in \Omega} f(x)$. Let 
$$
M_i(a,b,\cdot) = \left\{
    \begin{array}{ll}
        S_{i}(a,b,\cdot) & \mbox{if } a\geq 1 \\
        a & \mbox{otherwise,}
    \end{array}
\right.
$$
for all $i\in \sset{0,1,2,\infty}$, where $S_i$ for is as defined in Proposition~\ref{prop:nonincreasing}.
Then, for all $i\in \sset{0,1,2,\infty}$,
\[\log \PPs{X\sim\DP(\alpha \nu_0)}{\EEs{X}{f}\geq u}\leq -\sup_{v > u}{\pa{{\alpha - \eta_v\pa{\Omega}}}\Kinf\pa{\frac{\alpha\nu_0-\eta_{v}}{\alpha - \eta_v\pa{\Omega}}, u, f}},\]
where $\eta_v \triangleq
M_i(\alpha \nu_0\pa{f^{-1}(\sset{v})},\alpha - \alpha \nu_0\pa{f^{-1}(\sset{v})},u/v)\frac{\nu_0\pa{~\cdot~\cap f^{-1}(\sset{v})}}{\nu_0\pa{f^{-1}(\sset{v})} }.
$ 
\label{thm:beyound1offset}
\end{theorem}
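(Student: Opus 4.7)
The plan is to fix $v > u$, partition $\Omega$ into $I := f^{-1}(\{v\})$ and $J := \Omega \setminus I$, and reduce the DP deviation to a Beta-type deviation through the Gamma process representation, following the outline given in the paragraph preceding the theorem. Writing $X = G/G(\Omega)$ for $G \sim \cG(\alpha\nu_0)$ and exploiting the factorization of the Gamma process yields a decomposition $X = B X_I + (1-B) X_J$ with $B := X(I) \sim \text{Beta}(a,b)$ (where $a := \alpha\nu_0(I)$ and $b := \alpha - a$), and $X_I := X(\cdot \cap I)/X(I)$, $X_J := X(\cdot \cap J)/X(J)$ mutually independent of $B$ and of each other. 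Since $f \equiv v$ on $I$, one obtains $\EEs{X}{f} = Bv + (1-B) W$ with $W := \EEs{X_J}{f}$ independent of $B$.

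Conditioning on $W$ and applying Corollary~\ref{cor:citephenzi2023some} with perturbation $\eta := M_i(a, b, u/v)$, then composing with the variational formula of Lemma~\ref{lem:Variational formula for Kinf} applied to the two-point distribution $\tilde p := \pa{(a-\eta)/(a+b-\eta),\, b/(a+b-\eta)}$ on the values $(v, w)$, yields the Chernoff-type inequality
\[
\PP{Bv + (1-B) W \geq u \mid W = w} \leq (1-\lambda(v-u))^{-(a-\eta)} (1-\lambda(w-u))^{-b}
\]
for every $\lambda \in [0, 1/(f_{\max}-u)]$. A brief check handles the degenerate regimes in which $\tilde p$ already satisfies $\tilde p_1 v + \tilde p_2 w \geq u$ (in particular when $w \geq u$): there $\Kinf = 0$ and both factors on the right-hand side are at least $1$ by sign considerations, so the bound is trivially valid.

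The heart of the argument is to integrate this conditional bound over $W$ and identify the outcome with an integral against the perturbed measure $\alpha\nu_0 - \eta_v$. For the factor depending on $W$, I would use the Gamma mixture identity $(1-c)^{-b} = \mathbb{E}\bigl[e^{cT}\bigr]$ for $T \sim \text{Gamma}(b,1)$ and, crucially, realize $T$ as $G(J)$, which is independent of $W$ by the Gamma-process factorization. Fubini and cancellation of $G(J)$ then give
\[
\mathbb{E}\bigl[(1-\lambda(W-u))^{-b}\bigr] = \mathbb{E}\bigl[\exp\pa{\textstyle\lambda\int_J (f-u)\, dG}\bigr] = \exp\pa{\textstyle-\alpha\int_J \log(1-\lambda(f-u))\, d\nu_0}
\]
by the Gamma-process MGF formula~\eqref{rel:MGFGP}. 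Multiplying with the $v$-factor and using $f \equiv v$ on $I$, the resulting log-bound is exactly $-\int \log(1-\lambda(f-u))\, d(\alpha\nu_0 - \eta_v)$, with $\eta_v$ the perturbation specified in the theorem.

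Finally, the variational formula of Lemma~\ref{lem:Variational formula for Kinf}, applied in reverse to the probability measure $(\alpha\nu_0 - \eta_v)/(\alpha - \eta_v(\Omega))$, shows that optimizing over $\lambda$ in the previous display recovers the factor $(\alpha - \eta_v(\Omega))\, \Kinf\pa{(\alpha\nu_0 - \eta_v)/(\alpha - \eta_v(\Omega)), u, f}$. Taking the supremum over $v > u$ then yields the stated inequality. The main obstacle is the Gamma-mixture step: its clean resolution hinges on realizing the auxiliary Gamma variable as $G(J)$ so that it remains independent of $W$, after which the MGF identity produces the precise measure integral needed for the $\Kinf$ identification.
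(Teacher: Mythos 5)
Your proof is correct and mirrors the paper's own argument: the decomposition $\EEs{X}{f}=Bv+(1-B)W$ via the Gamma-process factorization, the conditional application of Corollary~\ref{cor:citephenzi2023some}, and the reassembly into $\Kinf$ via the variational formula of Lemma~\ref{lem:Variational formula for Kinf}. The one presentational difference is that you carry out the integration over $W$ explicitly with the Gamma-mixture identity, the realization $T=G(J)$, and the MGF formula~\eqref{rel:MGFGP}, while the paper packages exactly this step (for a general finite partition) as Proposition~\ref{prop:sup_PI}; the computations coincide once that proposition is specialized to the two-cell partition $\{I,J\}$. One small repair to your degenerate-case check: when $\frac{a-\eta}{a+b-\eta}v+\frac{b}{a+b-\eta}w\geq u$ but $w<u$, the second factor $(1-\lambda(w-u))^{-b}$ is strictly below $1$, so the claim that both factors are at least $1$ fails. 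The intermediate Chernoff-type inequality still holds there: since the $\Kinf$ of the two-point perturbed measure at level $u$ vanishes, the variational formula gives $(a-\eta)\log(1-\lambda(v-u))+b\log(1-\lambda(w-u))\leq 0$ for every admissible $\lambda$, so the \emph{product} of the two factors is at least $1$ even though the second factor alone need not be.
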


\begin{proof}
Consider some $v > u$. We define $t\triangleq \alpha - \eta_v\pa{\Omega}$ and $\nu\triangleq (\alpha \nu_0 - \eta_v)/(\alpha - \eta_v\pa{\Omega})$. Let $I\triangleq f^{-1}\pa{\sset{v}}$ and $J\triangleq \Omega \backslash I$. We have
\begin{align}
    \nonumber&\PPs{X\sim\DP(t\nu + \eta_v)}{\EEs{X}{f}\geq u} \\&=\textstyle
    \nonumber\PPs{G\sim \cG\pa{t\nu + \eta_v}}{{\int fdG}{} \geq u}
    \\&=
    \nonumber\PPs{G\sim \cG\pa{t\nu +\eta_v}}{\frac{G(I)}{G\pa{\Omega}} v +\frac{G(J)}{G(\Omega)}\frac{\int_J fdG}{G(J)} \geq u}
     \\&=
    \nonumber\PPs{X,B\sim \DP\pa{t\nu}\otimes \text{Beta}\pa{(t\nu +\eta_v)(I),t\nu(J)}}{B v +(1-B)\EEccs{X}{f}{J} \geq u}
    \\&
    \label{citephenzi2023some}
    \leq \EEs{X\sim \DP\pa{t\nu}}{\exp\pa{-t\Kinf\pa{\begin{pmatrix}
        \nu(I) \\ \nu(J)
    \end{pmatrix}
  , u, \begin{pmatrix}
        v \\ \EEccs{X}{f}{J}
    \end{pmatrix}}{}}},
    \\&\leq\label{sup_PI}
    \exp\pa{-t\Kinf\pa{\nu,u,f}}.
    \end{align}
    where \eqref{citephenzi2023some} is from Corollary~\ref{cor:citephenzi2023some} and \eqref{sup_PI}
is from Proposition~\ref{prop:sup_PI}.
\end{proof}

\section{Conclusion}
In this work, we refine the bounds for DPs by introducing a perturbation into the KL bound. Leveraging superadditivity and Fekete’s lemma, we derive a tighter non-asymptotic deviation bound by incorporating a unit mass perturbation into the KL divergence. Additionally, we explore perturbations beyond the unit mass case without relying on superadditivity.  Future directions include the following open questions:
\begin{itemize}
    \item In Theorem~\ref{thm:beyound1offset}, since the perturbation must be applied to a single value of \( f \), determining where to apply the perturbation becomes less clear (particularly when the point \( v \) maximizing \( f \) has insufficient mass). In contrast, in Theorem~\ref{th:devlinoffset}, the perturbation could be spread, making it straightforward to prioritize the highest values of \( f \) first.  Moreover, if \( \nu_0 \) assigns no atom to values of \( f \) greater than \( u \), the total perturbation cannot exceed 1 using our results.
    \item In the same vein, a similar issue arises for superadditivity in the convex case: the perturbation must be applied to a single point, with the additional constraint that the mass must be exactly 1. We believe this last limitation is an artifact of our analysis and should be avoidable. One reason for this is that we have superadditivity in both the case of zero mass and the case of mass 1, making it reasonable to expect superadditivity for intermediate cases as well.
\end{itemize}


\bibliographystyle{amsplain}
\bibliography{ref}

\providecommand{\bysame}{\leavevmode\hbox to3em{\hrulefill}\thinspace}
\providecommand{\MR}{\relax\ifhmode\unskip\space\fi MR }
\providecommand{\MRhref}[2]{%
  \href{http://www.ams.org/mathscinet-getitem?mr=#1}{#2}
}
\providecommand{\href}[2]{#2}
\begin{thebibliography}{10}

\bibitem{belomestny2023sharp}
Denis Belomestny, Pierre Menard, Alexey Naumov, Daniil Tiapkin, and Michal Valko, \emph{Sharp deviations bounds for dirichlet weighted sums with application to analysis of bayesian algorithms}, arXiv preprint arXiv:2304.03056 (2023).

\bibitem{bishop2006pattern}
Christopher~M Bishop and Nasser~M Nasrabadi, \emph{Pattern recognition and machine learning}, vol.~4, Springer, 2006.

\bibitem{boyd}
S~P Boyd and L~Vandenberghe, \emph{{Convex Optimization}}, Cambridge University Press, 2004.

\bibitem{burnetas1996optimal}
Apostolos~N Burnetas and Michael~N Katehakis, \emph{{Optimal adaptive policies for sequential allocation problems}}, Advances in Applied Mathematics \textbf{17} (1996), no.~2, 122--142.

\bibitem{casella2005christian}
George Casella, \emph{Christian robert}, Monte Carlo Statistical Methods (2005).

\bibitem{castillo2017polya}
Isma{\"e}l Castillo, \emph{P{\'o}lya tree posterior distributions on densities},  (2017).

\bibitem{Chafa__2009}
Djalil Chafaï and Didier Concordet, \emph{Confidence regions for the multinomial parameter with small sample size}, Journal of the American Statistical Association \textbf{104} (2009), no.~487, 1071--1079.

\bibitem{chaganty1997large}
Narasinga~R Chaganty, \emph{Large deviations for joint distributions and statistical applications}, Sankhy{\=a}: The Indian Journal of Statistics, Series A (1997), 147--166.

\bibitem{csiszar2003information}
I.~Csiszar and F.~Matus, \emph{Information projections revisited}, IEEE Transactions on Information Theory \textbf{49} (2003), no.~6, 1474--1490.

\bibitem{csiszar2004information}
I.~Csiszár and P.C. Shields, \emph{Information theory and statistics: A tutorial}, Foundations and Trends® in Communications and Information Theory \textbf{1} (2004), no.~4, 417--528.

\bibitem{dembo2009large}
Amir Dembo, \emph{Large deviations techniques and applications}, Springer, 2009.

\bibitem{doss1982tails}
Hani Doss and Thomas Sellke, \emph{The tails of probabilities chosen from a dirichlet prior}, The Annals of Statistics \textbf{10} (1982), no.~4, 1302--1305.

\bibitem{dumbgen1998new}
Lutz Dumbgen, \emph{New goodness-of-fit tests and their application to nonparametric confidence sets}, Annals of statistics (1998), 288--314.

\bibitem{dutka1981incomplete}
Jacques Dutka, \emph{The incomplete beta function—a historical profile}, Archive for history of exact sciences (1981), 11--29.

\bibitem{elder2016bayesian}
Sam Elder, \emph{Bayesian adaptive data analysis guarantees from subgaussianity}, arXiv preprint arXiv:1611.00065 (2016).

\bibitem{ellis1984large}
Richard~S Ellis, \emph{Large deviations for a general class of random vectors}, The Annals of Probability \textbf{12} (1984), no.~1, 1--12.

\bibitem{fekete1923verteilung}
Michael Fekete, \emph{{\"U}ber die verteilung der wurzeln bei gewissen algebraischen gleichungen mit ganzzahligen koeffizienten}, Mathematische Zeitschrift \textbf{17} (1923), no.~1, 228--249.

\bibitem{feng2007large}
Shui Feng, \emph{Large deviations for dirichlet processes and poisson-dirichlet distribution with two parameters}, Electronic Journal of Probability \textbf{12} (2007), no.~none, 787 -- 807.

\bibitem{feng2023hierarchical}
\bysame, \emph{Hierarchical dirichlet process and relative entropy}, Electronic Communications in Probability \textbf{28} (2023), 1--12.

\bibitem{ferguson1973bayesian}
Thomas~S Ferguson, \emph{A bayesian analysis of some nonparametric problems}, The annals of statistics (1973), 209--230.

\bibitem{frankl1990some}
Peter Frankl and Hiroshi Maehara, \emph{Some geometric applications of the beta distribution}, Annals of the Institute of Statistical Mathematics \textbf{42} (1990), 463--474.

\bibitem{ganesh1999inverse}
Ayalvadi Ganesh and Neil O'Connell, \emph{An inverse of sanov's theorem}, Statistics \& Probability Letters \textbf{42} (1999), no.~2, 201--206.

\bibitem{ganesh2000large}
Ayalvadi~J Ganesh and Neil O'connell, \emph{A large-deviation principle for dirichlet posteriors}, Bernoulli (2000), 1021--1034.

\bibitem{garivier2022klucbswitch}
Aurélien Garivier, Hédi Hadiji, Pierre Menard, and Gilles Stoltz, \emph{Kl-ucb-switch: optimal regret bounds for stochastic bandits from both a distribution-dependent and a distribution-free viewpoints}, 2022.

\bibitem{gartner1977large}
J{\"u}rgen G{\"a}rtner, \emph{On large deviations from the invariant measure}, Theory of Probability \& Its Applications \textbf{22} (1977), no.~1, 24--39.

\bibitem{georgii2011gibbs}
Hans-Otto Georgii, \emph{Gibbs measures and phase transitions}, Gibbs Measures and Phase Transitions, de Gruyter, 2011.

\bibitem{henzi2023some}
Alexander Henzi and Lutz D{\"u}mbgen, \emph{Some new inequalities for beta distributions}, Statistics \& Probability Letters (2023), 109783.

\bibitem{honda2015nonasymptotic}
Junya Honda and Akimichi Takemura, \emph{Non-asymptotic analysis of a new bandit algorithm for semi-bounded rewards}, Journal of Machine Learning Research \textbf{16} (2015), no.~113, 3721--3756.

\bibitem{Lai1985asymptotically}
Tze~L Lai and Herbert Robbins, \emph{{Asymptotically efficient adaptive allocation rules}}, Advances in Applied Mathematics \textbf{6} (1985), no.~1, 4--22.

\bibitem{lukacs1955characterization}
Eugene Lukacs, \emph{A characterization of the gamma distribution}, The Annals of Mathematical Statistics \textbf{26} (1955), no.~2, 319--324.

\bibitem{lynch1987large}
James Lynch and Jayaram Sethuraman, \emph{Large deviations for processes with independent increments}, The annals of probability \textbf{15} (1987), no.~2, 610--627.

\bibitem{marchal2017sub}
Olivier Marchal and Julyan Arbel, \emph{On the sub-gaussianity of the beta and dirichlet distributions}, Electronic Communications in Probability \textbf{22} (2017), no.~none, 1 -- 14.

\bibitem{murphy2022probabilistic}
Kevin~P. Murphy, \emph{Probabilistic machine learning: An introduction}, MIT Press, 2022.

\bibitem{perrault2024new}
Pierre Perrault, Denis Belomestny, Pierre M{\'e}nard, {\'E}ric Moulines, Alexey Naumov, Daniil Tiapkin, and Michal Valko, \emph{A new bound on the cumulant generating function of dirichlet processes}, arXiv preprint arXiv:2409.18621 (2024).

\bibitem{rockafellar1974conjugate}
R~Tyrrell Rockafellar, \emph{Conjugate duality and optimization}, SIAM, 1974.

\bibitem{rubin1981bayesian}
Donald~B Rubin, \emph{The bayesian bootstrap}, The annals of statistics (1981), 130--134.

\bibitem{sanov1961probability}
Ivan~Nicolaevich Sanov, \emph{On the probability of large deviations of random variables}, Selected Translations in Mathematical Statistics and Probability \textbf{1} (1961), 213--244.

\bibitem{skorski2023bernstein}
Maciej Skorski, \emph{Bernstein-type bounds for beta distribution}, Modern Stochastics: Theory and Applications (2023), 1--18.

\bibitem{tiapkin2022dirichlet}
Daniil Tiapkin, Denis Belomestny, Eric Moulines, Alexey Naumov, Sergey Samsonov, Yunhao Tang, Michal Valko, and Pierre Menard, \emph{From {D}irichlet to rubin: Optimistic exploration in {RL} without bonuses}, Proceedings of the 39th International Conference on Machine Learning (Kamalika Chaudhuri, Stefanie Jegelka, Le~Song, Csaba Szepesvari, Gang Niu, and Sivan Sabato, eds.), Proceedings of Machine Learning Research, vol. 162, PMLR, 17--23 Jul 2022, pp.~21380--21431.

\bibitem{van1997gamma}
Hans van~der Weide, \emph{Gamma processes}, Max MENDEL (1997), 77.

\bibitem{Varadhan1966asymptotic}
SR~Srinivasa Varadhan, \emph{Asymptotic probabilities and differential equations}, Communications on Pure and Applied Mathematics \textbf{19} (1966), no.~3, 261--286.

\bibitem{vershik2001remarks}
Anatolii~Moiseevich Vershik, Marc Yor, and Natalia~Vladimirovna Tsilevich, \emph{Remarks on the markov--krein identity and quasi-invariance of the gamma process}, Zap. Nauchn. Sem. S.-Peterburg. Otdel. Mat. Inst. Steklov.(POMI) \textbf{283} (2001), 21--36.

\bibitem{zhang2020non}
Anru~R Zhang and Yuchen Zhou, \emph{On the non-asymptotic and sharp lower tail bounds of random variables}, Stat \textbf{9} (2020), no.~1, e314.

\end{thebibliography}


\begin{supplement}
\appendix
\section{Miscellaneous}
\begin{proposition}
\label{prop:nonincreasing}
 Let $a>0$, $b > 0$, $u\in (0,1)$ and 
\begin{align*} 
R(\eta)&\triangleq\frac{u-\frac{a-\eta}{a+b-\eta}}{1-\frac{a-\eta}{a+b-\eta}}u^{-\eta} -1 = \frac{ub-{(1-u)(a-\eta)}}{b}u^{-\eta} -1.
\end{align*}
Then,
\begin{itemize}
    \item[$(i)$] In $\R_+$, $R$ admits a unique root $\eta^* = S_{\infty}(a,b,u)\triangleq a - b\frac{u}{1-u} + \frac{W_0\pa{ \frac{b u^{a - b\frac{u}{1-u}} }{1 - u}\log\pa{\frac{1}{u}}}}{\log\pa{\frac{1}{u}}}, $ where $W_0$ is the principal branch of the Lambert W function.
    \item[$(ii)$] $R(\eta)\leq 0$ for $\eta\in [0,\eta^*]$.
    \item[$(iii)$] $\eta^*\leq a \iff a\geq 1$.
    \item[$(iv)$] If $a\geq 1$, we have $\eta^*\geq S_2(a,b,u)\geq S_1(a,b,u)\geq S_0(a,b,u)\geq 0,$ where
\begin{align*} 
S_2(a,b,u)&\triangleq a-b\pa{b\log\pa{\frac{1}{u}}+\frac{1}{u}-1}\frac{\sqrt{1+\frac{2\pa{\frac{1}{u}-1}^{2}\log\pa{\frac{1}{u}}(a-1)}{\pa{b\log\pa{\frac{1}{u}}+\frac{1}{u}-1}^2}}-1}{\pa{\frac{1}{u}-1}^{2}},
\\S_1(a,b,u)&\triangleq a-b\frac{a-1}{b+\pa{\frac{1}{u}-1}/\log\pa{\frac{1}{u}}},\\S_0(a,b,u)&\triangleq a-b\frac{a-1}{b+1}.
\end{align*}
    \item[$(v)$] If $a\geq 1$, $S_\infty(a,b,\cdot)$ is non-increasing.
\end{itemize}
\end{proposition}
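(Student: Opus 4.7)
The plan is to first map out the shape of $R$ on $\R_+$, then derive each of the five claims from this structural analysis. Setting $c \triangleq a - bu/(1-u)$, algebraic simplification gives $R(\eta) = \frac{(1-u)(\eta - c)}{b}u^{-\eta} - 1$, with $R(0) = -(1-u)(a+b)/b < 0$ and $R(\eta) \to +\infty$ as $\eta \to \infty$. Differentiating yields $R'(\eta) = \frac{(1-u)u^{-\eta}}{b}\bigl[1 + (\eta - c)\log(1/u)\bigr]$, which vanishes at a unique $\eta_{\min} = c - 1/\log(1/u)$ (a strict minimum). Hence $R$ has exactly one root $\eta^* \in \R_+$ and $\{R \leq 0\} \cap \R_+ = [0, \eta^*]$, immediately settling (ii) and the uniqueness in (i). To identify $\eta^*$ explicitly, I would substitute $\tau = \eta - c$ and $y = \tau \log(1/u)$, transforming $R(\eta) = 0$ into $y e^y = \frac{b\,u^{a - bu/(1-u)}}{1-u}\log(1/u)$; the strictly positive right-hand side admits a single real preimage via the principal branch $W_0$, yielding the stated formula. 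For (iii), direct evaluation gives $R(a) = u^{1-a} - 1$, whose sign matches $a \geq 1$, so the shape analysis converts this into $\eta^* \leq a \iff a \geq 1$.

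For (iv), the pivotal step is a change of variables: setting $\delta^* = a - \eta^*$, $\ell = \log(1/u)$, $r = 1/u - 1$, algebraic manipulation of $R(\eta^*) = 0$ (using $1-u = ur$) reveals the cleaner logarithmic form
\[
(a - 1 - \delta^*)\,\ell \;=\; -\log\bigl(1 - r\delta^*/b\bigr).
\]
Assuming $a \geq 1$, part (iii) gives $\delta^* \in [0, b/r)$, so this is well-defined. Applying the elementary Taylor inequalities $-\log(1-x) \geq x$ and $-\log(1-x) \geq x + x^2/2$ (both valid for $x \in [0,1)$) to $x = r\delta^*/b$ then produces, respectively, a linear upper bound on $\delta^*$ whose corresponding lower bound on $\eta^*$ is exactly $S_1$, and a quadratic upper bound on $\delta^*$ whose positive root is exactly $a - S_2$. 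Hence $\eta^* \geq S_1$ and $\eta^* \geq S_2$. The remaining comparisons are elementary: positivity $S_0 = (a+b)/(b+1) > 0$; $S_0 \leq S_1$ follows from $\log(1/u) \leq 1/u - 1$; and $S_1 \leq S_2$ from $\sqrt{1+x} \leq 1 + x/2$, applied directly to the stated formula for $S_2$.

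For (v), I would differentiate the defining equation $(1-u)(\eta^* - c)u^{-\eta^*} = b$ implicitly in $u$ and simplify by re-using the equation to eliminate the factor $\eta^* - c$. This yields
\[
\partial_u \eta^* = \frac{b\,u^{\eta^* - 1}}{1-u} \cdot \frac{\eta^* + (1-\eta^*)u - u^{1-\eta^*}}{(1-u) - b u^{\eta^*} \log u},
\]
and the denominator is positive. When $a \geq 1$, computing $R(1) = -(a-1)(1-u)/(bu) \leq 0$ together with (ii) gives $\eta^* \geq 1$. Setting $\psi(t) = \eta^* + (1-\eta^*)t - t^{1-\eta^*}$ (with $\eta^*$ treated as a constant), one checks $\psi(1) = 0$ and $\psi'(t) = (1-\eta^*)(1 - t^{-\eta^*}) \geq 0$ on $(0, 1]$ (both factors non-positive when $\eta^* \geq 1$), so $\psi(u) \leq 0$, whence $\partial_u \eta^* \leq 0$.

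The main obstacle is the change-of-variable in (iv). Once $R(\eta^*) = 0$ is recast in the clean logarithmic form above, both the $S_1$ and $S_2$ bounds follow routinely from standard Taylor inequalities for $-\log(1-x)$, and the chain $S_0 \leq S_1 \leq S_2$ reduces to elementary monotonicity; but spotting this reformulation, and verifying that the two Taylor inequalities yield \emph{exactly} the stated $S_1$ and $S_2$ after algebraic cleanup, is the real crux of the argument.
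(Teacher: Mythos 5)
Your proof is correct and, for parts $(i)$, $(ii)$, and $(iv)$, follows essentially the same path as the paper's: the Lambert-$W$ reparametrization of $R(\eta)=0$, the observation $R(0)<0$ combined with uniqueness of the root, and the Taylor inequalities $-\log(1-x)\geq x$ and $-\log(1-x)\geq x+x^2/2$ applied to the logarithmic form of the equation. Your systematic "shape analysis" of $R$ (showing $R'$ vanishes at most once and $R$ is unimodal with $R(0)<0<R(\infty)$) is a clean unifying device that the paper only uses implicitly; it makes the deduction of $(ii)$ and of $\{R\leq 0\}\cap\R_+=[0,\eta^*]$ more transparent.

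For $(iii)$ and $(v)$ your arguments differ from the paper's. For $(iii)$ you evaluate $R(a)=u^{1-a}-1$ and read off the sign from the shape analysis, whereas the paper manipulates the Lambert-$W$ expression for $\eta^*$ directly through the defining identity $W_0(z)e^{W_0(z)}=z$; yours is shorter and avoids the $W$-algebra. For $(v)$ you implicitly differentiate the defining equation $(1-u)(\eta^*-c)u^{-\eta^*}=b$ and reduce the sign of $\partial_u\eta^*$ to the sign of $\psi(u)=\eta^*+(1-\eta^*)u-u^{1-\eta^*}$, which you handle via $\eta^*\geq 1$ (itself a consequence of $R(1)\leq 0$). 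The paper instead passes to the logarithmic reformulation $f(\eta,u)$, shows $\partial_u f\geq 0$ by observing that the numerator is a sum of two non-increasing functions of $u$ vanishing at $u=1$, and then argues monotonicity of $S_\infty(a,b,\cdot)=\sup\{\eta: f(\eta,\cdot)<0\}$ via nested feasible sets. Both are implicit-function arguments; yours produces an explicit formula for $\partial_u\eta^*$ at the cost of a slightly longer algebraic reduction.

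One small inaccuracy: you attribute $\delta^*\in[0,b/r)$ to part $(iii)$, but $(iii)$ only gives the lower bound $\delta^*\geq 0$. The upper bound $\delta^*<b/r$ follows instead from $R(\eta^*)=0$: since $u^{-\eta^*}>0$, the factor $ub-(1-u)(a-\eta^*)$ must be positive. This does not affect the validity of the argument, but the justification should be corrected.
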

\begin{proof}
Below, we provide the proofs for each point individually.
\begin{itemize}
\item[$(i)$] The equation $R(\eta)=0$ is equivalent to
\begin{align*}
    \pa{b\frac{u}{1-u}+\eta -a}\log\pa{\frac{1}{u}}\exp\pa{\pa{\eta-a+b\frac{u}{1-u}}\log\pa{\frac{1}{u}}} = \frac{b{u}^{a-b\frac{u}{1-u}}}{1-u}\log\pa{\frac{1}{u}}.
\end{align*}
Thus, we have that $\pa{b\frac{u}{1-u}+\eta -a}\log\pa{\frac{1}{u}}$ is the unique solution to a transcendental equation involving the Lambert W function. Since the RHS is non-negative, we have \[\pa{b\frac{u}{1-u}+\eta -a}\log\pa{\frac{1}{u}} = W_0\pa{ \frac{b u^{a - b\frac{u}{1-u}} }{1 - u}\log\pa{\frac{1}{u}}}.\]
leading to the expression of $\eta^*$. 
\item[$(ii)$] Since \( R(0) = \frac{b u - a(1 - u)}{b} - 1 < 0 \) and \( \eta^* \) is unique, it follows by continuity of $R$ that \( R(\eta) \leq 0 \) for \( \eta\in [0, \eta^*] \).
\item[$(iii)$] We have 
\begin{align*}
    \eta^* \leq a &\iff  {W_0\pa{ \frac{b u^{a - b\frac{u}{1-u}} }{1 - u}\log\pa{\frac{1}{u}}}}{} \leq \log\pa{\frac{1}{u}} b\frac{u}{1-u}\\
    &\iff  \frac{b u^{a - b\frac{u}{1-u}} }{1 - u}\log\pa{\frac{1}{u}} \leq \log\pa{\frac{1}{u}} b\frac{u}{1-u}\exp\pa{\log\pa{\frac{1}{u}} b\frac{u}{1-u}}
    \\&\iff u^a\leq u
    \iff a \geq 1.
\end{align*}

\item[$(iv)$] Assume $a\geq 1$. Taking the logarithm of both sides of the equation $R(\eta^*)=0$, we obtain:
\[\log(u){\eta^*} = \log(u) +\log\pa{1-\frac{\frac{1-u}{u}(a-\eta^*)}{b}}.\]
For $x\geq 0$, from the relation $\log(1-x)\leq-x -x^2/2\leq -x$, we have
\[ {\eta^*} \geq 1 +\frac{{{\frac{1-u}{ub}(a-\eta^*)}{}}}{\log(1/u)} + \frac{{\pa{\frac{1-u}{ub}(a-\eta^*)}^2{}}}{2\log(1/u)}\geq 1 +\frac{{{\frac{1-u}{ub}(a-\eta^*)}{}}}{\log(1/u)}.\]
In other words, \( \eta^* \) is at least as large as the smallest root of a quadratic polynomial, which in turn is greater than or equal to the root of a linear polynomial.
\begin{align*}\eta^*&\geq a-b\frac{\sqrt{2(1/u-1)^{2}\log(\frac{1}{u})(a-1)+\left( b\log\left(\frac{1}{u}\right)+1/u-1\right)^{2}}-\pa{b\log(\frac{1}{u})+1/u-1}}{(1/u-1)^{2}}
\\&\geq a-b\frac{a-1}{b+(1/u-1)/\log(1/u)}\geq a-b\frac{a-1}{b+1}.
\end{align*}

\item[$(v)$] Assume $a\geq 1$. For $\eta\in [0,a]$ and $u\in\pa{0,1}$, let \[f(\eta,u)\triangleq 
 \left\{
    \begin{array}{ll}
        {\eta} - 1 +\frac{\log\pa{1-\frac{\frac{1-u}{u}(a-\eta)}{b}}}{\log(1/u)} & \mbox{if } u \in \pa{\frac{{a-\eta}}{{a+b-\eta}},1} \\
        -\infty & \mbox{otherwise.}
    \end{array}
\right.
\] Then, $f(\eta,\cdot)$ is non-decreasing on $\pa{\frac{{a-\eta}}{{a+b-\eta}},1}$. Indeed, we have
\[\frac{\partial f(\eta,u)}{\partial u} = \frac{\log\pa{1-\frac{\frac{1-u}{u}(a-\eta)}{b}} + \frac{\pa{a-\eta}\log(1/u)}{\pa{a+b-\eta}{}u - \pa{a-\eta}}}{u\log^2(1/u)},\]
which is is non-negative because the numerator is a sum of two non-increasing functions, both of which tend to \( 0 \) as \( u \to 1^- \). From
\[S_\infty(a,b,\cdot) = \sup{\sset{\eta\in[0,a],~f(\eta,\cdot) < 0}},\]
we get that this function is non-increasing. Indeed, a feasible $\eta$ for $u_1\in (0,1)$ is also feasible for $u_2\in (0,u_1)$, so that the feasible sets are nested. 
\end{itemize}
\end{proof}

\begin{proposition}\label{prop:sup_PI}
Let $\Pi$ be a the set of finite measurable partition of $\Omega$. Then, for $t>0$, $\nu\in \cM_1(\Omega)$, $f\in\cC\pa{\Omega}$ and $u\in [f_{\min},f_{\max})$, where $f_{\max}\triangleq \max_{x\in \Omega} f(x)$, $f_{\min}\triangleq \min_{x\in \Omega} f(x)$,
\[\exp\pa{-t\Kinf\pa{\nu,u,f}} = \sup_{(A_1,\dots, A_k)\in \Pi} \EEs{X\sim \DP\pa{t\nu}}{\exp\pa{-t \Kinf(\pa{\nu(A_i)}_i,u,\pa{\EEccs{X}{f}{A_i}}_i)}}.\]
In particular, for $t\to\infty$, we recover that
\[\Kinf\pa{\nu,u,f} = \inf_{(A_1,\dots, A_k)\in \Pi}  \Kinf(\pa{\nu(A_i)}_i,u,\pa{\EEccs{\nu}{f}{A_i}}_i).\]
\end{proposition}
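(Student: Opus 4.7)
The plan is to establish the identity as two matching inequalities, preceded by an auxiliary negative-moment identity for DP means: for $\mu\in\cM_1(\Omega)$ and continuous $\phi:\Omega\to(0,\infty)$,
\[
\EEs{X\sim\DP(s\mu)}{\pa{\EEs{X}{\phi}}^{-s}} = \exp\pa{-s\EEs{\mu}{\log\phi}}.
\]
For $\phi=\sum_j c_j\II{B_j}$ piecewise constant on a finite partition $(B_j)_j$, this reduces to $\EEs{X}{\pa{\sum_j c_j X(B_j)}^{-s}}=\prod_j c_j^{-s\mu(B_j)}$ for $(X(B_j))_j\sim\text{Dir}((s\mu(B_j))_j)$, which is exactly the classical Liouville--Carlson Dirichlet integral. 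The extension to general continuous $\phi$ follows by uniform approximation on the compact $\Omega$ and bounded convergence, the positivity $\phi\geq \min \phi > 0$ controlling the negative $s$-th moment.

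For the upper bound $\sup_\Pi \cdot \leq \exp(-t\Kinf(\nu,u,f))$, I fix a partition $(A_i)_i$ and apply Lemma~\ref{lem:Variational formula for Kinf} to the inner $\Kinf$ with $\lambda^*$ realizing the maximum in $\Kinf(\nu,u,f)$. Setting $\phi^*=1-\lambda^*(f-u)\geq 0$ and $X_i = X|_{A_i}/X(A_i)$, the identity $\EEs{X_i}{\phi^*}=1-\lambda^*(\EEccs{X}{f}{A_i}-u)$ gives
\[
\exp\pa{-t\Kinf\pa{\pa{\nu(A_i)}_i,u,\pa{\EEccs{X}{f}{A_i}}_i}} \leq \prod_i \pa{\EEs{X_i}{\phi^*}}^{-t\nu(A_i)}.
\]
By the Gamma process representation and the Lukacs characterization, the $(X_i)_i$ are mutually independent with $X_i\sim\DP(t\nu(A_i)\tilde\nu_i)$ where $\tilde\nu_i=\nu|_{A_i}/\nu(A_i)$. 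Taking expectations factorizes the product, and applying the auxiliary identity on each $A_i$ yields
\[
\EEs{X}{\prod_i \pa{\EEs{X_i}{\phi^*}}^{-t\nu(A_i)}}=\prod_i \exp\pa{-t\int_{A_i}\log\phi^*\,d\nu}=\exp\pa{-t\EEs{\nu}{\log\phi^*}}=\exp(-t\Kinf(\nu,u,f))
\]
by the defining property of $\lambda^*$. That $\lambda^*\in[0,1/(f_{\max}-u)]$ remains admissible for the partition $\Kinf$ follows from $\max_i \EEccs{X}{f}{A_i}\leq f_{\max}$, and well-definedness of the negative powers uses the conclusion of Lemma~\ref{lem:Variational formula for Kinf} that $\nu(f^{-1}(\{f_{\max}\}))=0$ when $\lambda^*$ saturates.

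For the matching lower bound I pick a refining sequence $(\Pi^{(n)})$ of partitions with mesh tending to zero, which exists by compactness of $\Omega$. Uniform continuity of $f$ forces the oscillation of $f$ over each block of $\Pi^{(n)}$ to vanish, so $\EEccs{X}{f}{A_i^{(n)}}$ converges uniformly in $X$ to the value of $f$ at any representative of the block. A standard continuity argument for the finite-dimensional $\Kinf$ then yields $\Kinf\pa{\pa{\nu(A_i^{(n)})}_i,u,\pa{\EEccs{X}{f}{A_i^{(n)}}}_i}\to \Kinf(\nu,u,f)$ uniformly in $X$, and bounded convergence (the integrand lies in $[0,1]$) delivers $\EEs{X}{\exp(-t\Kinf(\Pi^{(n)}))}\to\exp(-t\Kinf(\nu,u,f))$, so the supremum matches the LHS.

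The "in particular" statement follows by taking $-\frac{1}{t}\log$ of the finite-$t$ identity and letting $t\to\infty$: the LHS is identically $\Kinf(\nu,u,f)$, while for each fixed partition Varadhan's lemma applied to the LDP of Theorem~\ref{thm:gen_ldp_DP} produces the limit $\inf_\mu\pa{\Kinf\pa{\pa{\nu(A_i)}_i,u,\pa{\EEccs{\mu}{f}{A_i}}_i}+\KL{\nu}{\mu}}\leq \Kinf\pa{\pa{\nu(A_i)}_i,u,\pa{\EEccs{\nu}{f}{A_i}}_i}$, and refining partitions again furnish the matching lower bound. The main obstacle I anticipate is the rigorous justification of the auxiliary negative-moment identity for general continuous $\phi$: while Carlson's finite-dimensional identity is classical, transferring it to the infinite-dimensional DP requires care to interchange limit and expectation, leveraging continuity of $\phi$ on the compact $\Omega$ and uniform integrability of the negative moments.
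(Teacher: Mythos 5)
Your proof is correct; the lower‐bound direction and the $t\to\infty$ consequence follow the same refining‐partitions‐plus‐continuity‐of‐$\Kinf$ route as the paper's, but your upper bound goes a genuinely different way. The paper expresses $\Kinf\bigl(\pa{\nu(A_i)}_i,u,\pa{\EEccs{X}{f}{A_i}}_i\bigr)$ through the log‐MGF of an auxiliary $\cG(t\nu)$ evaluated on the partition, interchanges $\min_\lambda$ and $\EEs{X}{\cdot}$ by Jensen, and then observes that averaging over $X$ against that auxiliary Gamma process reproduces the unconditional MGF $\EEs{G}{e^{\lambda\int(f-u)dG}}$. You instead fix the full‐space optimizer $\lambda^*$ once, factorize over blocks via the conditional DP independence $X|_{A_i}/X(A_i)\sim\DP(t\nu|_{A_i})$, and collapse each block with the negative‐moment identity $\EEs{X\sim\DP(s\mu)}{\pa{\EEs{X}{\phi}}^{-s}}=e^{-s\EEs{\mu}{\log\phi}}$. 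That identity is correct, and it is in fact an immediate corollary of the Gamma‐process MGF \eqref{rel:MGFGP} that the paper already uses: take $g=-\lambda\phi$, integrate out $G(\Omega)$ to get $\EEs{X}{(1+\lambda\EEs{X}{\phi})^{-s}}=e^{-s\EEs{\mu}{\log(1+\lambda\phi)}}$, multiply by $\lambda^s$ and let $\lambda\to\infty$; so you do not need Carlson's finite‐dimensional formula nor a separate approximation‐and‐interchange argument for general continuous $\phi$. What your route buys is a slightly cleaner chain of inequalities that avoids the $\EE\min\le\min\EE$ step; what it costs is the extra auxiliary identity. Two spots merit more care: the squeeze $\Kinf(\nu,u-\varepsilon,f)\le\Kinf\bigl(\pa{\nu(A_i)}_i,u,\pa{\EEccs{X}{f}{A_i}}_i\bigr)\le\Kinf(\nu,u+\varepsilon,f)$ uses continuity of $\Kinf(\nu,\cdot,f)$ at $u$ from both sides, not just one; and in the boundary case $\lambda^*=1/(f_{\max}-u)$ you should check explicitly that $\EEs{X_i}{\phi^*}>0$ almost surely (which does follow from $\nu(f^{-1}(\sset{f_{\max}}))=0$ and the DP partition property) and that the negative‐moment identity extends there, e.g.\ by monotone convergence as $\lambda\to\infty$.
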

\begin{proof}
Let $A_1\sqcup~\dots~\sqcup A_k=\Omega$ be a finite measurable partition of $\Omega$. 
Let $\ell(p)\triangleq \Kinf(\pa{\nu(A_i)}_i,u,\pa{\EEccs{p}{f}{A_i}}_i)$ for $p\in \cM_1(\Omega)$, then
    \begin{align*}
\ell(p)&={\max_{\lambda \in [0,{1}/{(\max_{i\in [k]}{\EEccs{p}{f}{A_i}}-u)}]} \sum_{i\in [k]}\nu(A_i)\log(1-\lambda(\EEccs{p}{f}{A_i}-u))}
    \\&=
    \max_{\lambda \in [0,{1}/{(\max_{i\in [k]}{\EEccs{p}{f}{A_i}}-u)}]} -t^{-1}\log\EEs{G\sim \cG\pa{t\nu}}{e^{\lambda\sum_{i\in [k]}G(A_i)(\EEccs{p}{f}{A_i}-u)}}
    \\&\geq \max_{\lambda \in [0,1/\pa{f_{\max}-u}]} -t^{-1}\log\EEs{G\sim \cG\pa{t\nu}}{e^{\lambda\sum_{i\in [k]}G(A_i)(\EEccs{p}{f}{A_i}-u)}},
    \end{align*}
so
    \begin{align*}
    &\EEs{X\sim \DP\pa{t\nu}}{\exp\pa{-t \ell(X)}} \\&\leq
    \EEs{X\sim \DP\pa{t\nu}}{\min_{\lambda \in [0,1/\pa{f_{\max}-u}]} \EEccs{G\sim \cG\pa{t\nu}}{e^{\lambda\sum_{i\in [k]}G(A_i)(\EEccs{X}{f}{A_i}-u)}}{X}}
    \\&\leq
    \min_{\lambda \in [0,1/\pa{f_{\max}-u}]}\EEs{X\sim \DP\pa{t\nu}}{ \EEccs{G\sim \cG\pa{t\nu}}{e^{\lambda\sum_{i\in [k]}G(A_i)(\EEccs{X}{f}{A_i}-u)}}{X}}
\\&=
    \min_{\lambda \in [0,1/\pa{f_{\max}-u}]}{ \EEs{G\sim \cG\pa{t\nu}}{e^{\lambda\pa{\int (f-u)dG}}}} = \exp\pa{-t\Kinf\pa{\nu,u,f}}.
\end{align*}
Since this is true for any finite partition, we have
\[\sup_{A_1\sqcup\dots\sqcup A_k\in \Pi} \EEs{X\sim \DP\pa{t\nu}}{\exp\pa{-t \Kinf(\pa{\nu(A_i)}_i,u,\pa{\EEccs{X}{f}{A_i}}_i)}} \leq \exp\pa{-t\Kinf\pa{\nu,u,f}}.\]
To have the equality, consider some $\varepsilon>0$. Then there exists $A_1\sqcup\dots\sqcup A_k\in \Pi$ such that for any $p\in \cM_1(\Omega)$, $\norm{\sum_{i\in [k]}\EEccs{p}{f}{A_i}\II{\cdot\in A_i} - f}_\infty \leq\varepsilon$. We thus have
\begin{align*}\exp\pa{-t\Kinf\pa{\nu,u - \varepsilon,f}} &= \exp\pa{-t\Kinf\pa{\nu,u,f+\varepsilon}}\\&\leq \EEs{X\sim \DP(t\nu)}{\exp\pa{-t\Kinf(\pa{\nu(A_i)}_i,u,\pa{\EEccs{p}{f}{A_i}}_i)}}.\end{align*}
We get our result using the left-continuity of $\Kinf$ \cite{garivier2022klucbswitch}.
\end{proof}

\begin{proposition}
    \label{prop:Ee^Z}
    Let $Z$ be a non-negative random variable. Then, for $s,t>0$, we have $\EE{Z^{s+t}} \geq \EE{Z^{s}}\EE{Z^{t}}$.
\end{proposition}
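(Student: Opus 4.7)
The plan is to recognize this as the non-negativity of the covariance between two non-decreasing functions of the same random variable. Indeed, writing $Z^{s+t} = Z^s \cdot Z^t$, the claim is equivalent to
\[\mathrm{Cov}(Z^s, Z^t) = \EE{Z^s Z^t} - \EE{Z^s}\EE{Z^t} \geq 0,\]
and on $\R_+$ both maps $z \mapsto z^s$ and $z \mapsto z^t$ are non-decreasing, so their covariance under any common law must be non-negative. This is the integral form of Chebyshev's sum inequality.

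To make this rigorous in one line, I would introduce an independent copy $Z'$ of $Z$ and observe that, pointwise,
\[(Z^s - Z'^s)(Z^t - Z'^t) \geq 0,\]
since both factors share the sign of $Z - Z'$ (monotonicity of $z\mapsto z^s$ and $z\mapsto z^t$ on $\R_+$). Expanding the product and taking expectations, using independence of $Z$ and $Z'$ and $Z \stackrel{d}{=} Z'$, yields
\[0 \leq \EE{Z^{s+t}} + \EE{Z'^{s+t}} - \EE{Z^s}\EE{Z'^t} - \EE{Z^t}\EE{Z'^s} = 2\EE{Z^{s+t}} - 2\EE{Z^s}\EE{Z^t},\]
which is exactly the desired inequality.

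There is essentially no obstacle, only a minor integrability check: if $\EE{Z^{s+t}} = \infty$ the inequality is trivial, and if $\EE{Z^{s+t}} < \infty$ then both $\EE{Z^s}$ and $\EE{Z^t}$ are finite (e.g.\ via $Z^s \leq 1 + Z^{s+t}$ on $\R_+$), so the expansion above is justified without any issue.
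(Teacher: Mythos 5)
Your argument is correct, but it takes a genuinely different route from the paper's. The paper proves the inequality via Jensen: since $x\mapsto x^{s/(s+t)}$ and $x\mapsto x^{t/(s+t)}$ are concave on $\R_+$, one has $\EE{Z^s}\le \EE{Z^{s+t}}^{s/(s+t)}$ and $\EE{Z^t}\le \EE{Z^{s+t}}^{t/(s+t)}$, and multiplying gives the claim. You instead observe that the claim is $\mathrm{Cov}(Z^s,Z^t)\ge 0$ and establish it through the standard symmetrization trick with an independent copy, i.e.\ the Chebyshev/FKG correlation inequality for two non-decreasing functions of the same random variable. Both proofs are short and correct; yours is arguably more general, since the same pointwise sign argument $(h_1(Z)-h_1(Z'))(h_2(Z)-h_2(Z'))\ge 0$ works for any pair of non-decreasing $h_1,h_2$, whereas the paper's proof is specific to power functions and yields, as a useful side product, the Lyapunov-type monotonicity $\EE{Z^s}^{1/s}\le\EE{Z^{s+t}}^{1/(s+t)}$. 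Your integrability remark ($Z^s\le 1+Z^{s+t}$ on $\R_+$) is exactly the right justification for the expansion; the paper elides this since Jensen handles the $\infty$ case automatically.
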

\begin{proof}Let $s,t>0$. Since both $x\mapsto x^\frac{s}{s+t}$ and $x\mapsto x^\frac{t}{s+t}$ are concave on $\R_+$, we have from Jensen's inequality
\[\EE{Z^{s}} = \EE{Z^{(s+t)\frac{s}{s+t}}} \leq \EE{Z^{s+t}}^\frac{s}{s+t},\]
\[\EE{Z^{t}} = \EE{Z^{(s+t)\frac{t}{s+t}}} \leq \EE{Z^{s+t}}^\frac{t}{s+t}.\]
We obtain the result by multiplying these inequalities.
\end{proof}

\begin{proposition}\label{prop:quicka_0,a_1,b_0,b_1}
    Let $a_0,a_1,b_0,b_1\in \R_+$. Then
    \[\pa{a_0a_1 - b_0b_1}^+\geq \pa{a_0 - b_0}^+\pa{a_1 - b_1}^+.\]
\end{proposition}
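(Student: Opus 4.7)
The plan is to split on whether the right-hand side vanishes. First, observe that if either $a_0 \leq b_0$ or $a_1 \leq b_1$, then $(a_0 - b_0)^+ (a_1 - b_1)^+ = 0$, and since $(a_0 a_1 - b_0 b_1)^+ \geq 0$ always, the inequality holds trivially in this case.

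The only substantive case is therefore $a_0 > b_0$ and $a_1 > b_1$. In that regime, I would drop the positive parts on both sides: it suffices to prove $a_0 a_1 - b_0 b_1 \geq (a_0 - b_0)(a_1 - b_1)$, because once this is shown, the right-hand side is non-negative, and so is the left-hand side, meaning both $(\cdot)^+$ operations act as the identity. Expanding $(a_0 - b_0)(a_1 - b_1) = a_0 a_1 - a_0 b_1 - a_1 b_0 + b_0 b_1$ and subtracting from $a_0 a_1 - b_0 b_1$, the inequality reduces to
\[
a_0 b_1 + a_1 b_0 \geq 2 b_0 b_1.
\]

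This last inequality follows from two separate term-by-term comparisons: since $a_0 > b_0 \geq 0$ and $b_1 \geq 0$, we have $a_0 b_1 \geq b_0 b_1$; symmetrically, since $a_1 > b_1 \geq 0$ and $b_0 \geq 0$, we have $a_1 b_0 \geq b_0 b_1$. Adding these two inequalities completes the argument. There is no real obstacle here, the proof is a direct case split followed by an elementary expansion; the only thing to watch is that the positive-part operation is handled correctly, which is why isolating the non-trivial case $a_0 > b_0$, $a_1 > b_1$ first is convenient.
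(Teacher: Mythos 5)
Your proof is correct and takes essentially the same route as the paper: reduce to the case $a_0 > b_0$, $a_1 > b_1$, and observe that $a_0 a_1 - b_0 b_1 - (a_0-b_0)(a_1-b_1) = (a_0-b_0)b_1 + (a_1-b_1)b_0 \geq 0$. (Incidentally, your non-strict $\geq$ is the correct sign here; the paper writes a strict $<$, which fails when $b_0 = b_1 = 0$, though the conclusion is unaffected.)
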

\begin{proof}
    We assume that $a_0 > b_0$ and $a_1 > b_1$, as the result is trivial otherwise. We have
    \begin{align*}
        0&<(a_0-b_0)b_1 + (a_1-b_1) b_0
        = a_0a_1 - b_0b_1 - (a_0 - b_0)(a_1 - b_1).
    \end{align*}
\end{proof}

\section{LDP in \(\R^d\)}
\label{app:ldp_dirichlet}
\subsection{LDP for the Gamma distribution}
\begin{fact}[CGF of the gamma distribution]\label{fact:camulant_gamma}
Let $Z\sim\text{Gamma}(k,1)$ be a sample from the gamma distribution with
shape parameter $k>0$ and scale parameter $1$. We have
\[\log\EE{e^{\lambda Z}}=\left\{
    \begin{array}{ll}
        -k\log\pa{1-\lambda} & \mbox{if } \lambda < 1 \\
        \infty & \mbox{otherwise.}
    \end{array}
\right.\]
\end{fact}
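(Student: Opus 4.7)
The plan is to compute the moment generating function directly from the gamma density and then split on the sign of $1-\lambda$, which governs the convergence of the resulting integral. Specifically, I would start from
\[\EE{e^{\lambda Z}}=\frac{1}{\Gamma(k)}\int_0^\infty e^{\lambda z}z^{k-1}e^{-z}dz=\frac{1}{\Gamma(k)}\int_0^\infty z^{k-1}e^{-(1-\lambda)z}dz,\]
so that the whole statement reduces to a single one-parameter integral whose convergence hinges entirely on whether the coefficient $1-\lambda$ of $z$ in the exponent is strictly positive.

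In the regime $\lambda<1$, I would perform the substitution $u=(1-\lambda)z$, which transforms the remaining integral into $(1-\lambda)^{-k}\int_0^\infty u^{k-1}e^{-u}du=(1-\lambda)^{-k}\Gamma(k)$ by the very definition of the gamma function. Dividing by $\Gamma(k)$ and taking the logarithm then yields $-k\log\pa{1-\lambda}$, exactly the value claimed.

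For $\lambda\geq 1$, the integrand $z^{k-1}e^{-(1-\lambda)z}$ no longer decays at infinity: when $\lambda>1$ it grows exponentially, while at the boundary $\lambda=1$ it reduces to $z^{k-1}$, whose integral on $(0,\infty)$ still diverges for every $k>0$. In both sub-cases $\EE{e^{\lambda Z}}=\infty$ and so its logarithm is $\infty$. The whole argument is a routine computation and I anticipate no real obstacle; the only point that deserves a moment of care is the boundary $\lambda=1$, which however is immediately handled by the elementary observation that $\int^\infty z^{k-1}dz=\infty$ for every $k>0$.
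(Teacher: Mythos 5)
Your computation is correct and is the standard direct evaluation of the moment generating function; the paper states this result as a Fact without proof, and your argument, including the careful treatment of the boundary case $\lambda=1$, is exactly the textbook derivation one would supply.
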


\begin{fact}[Gärtner-Ellis theorem \cite{gartner1977large,ellis1984large}]
\label{fact:ellis}Let $\pa{\mu_n}$ be a sequence of probability
measures on $\pa{\R^d,\cB(\R^d)}$. Assume that for all $t\in \R^d$, the limit 
\[c(t)\triangleq\lim_{n\to\infty}\frac{1}{n}\log\int_{\R^d}e^{\sca{ nt,x}}d\mu_n(x)\in [-\infty,+\infty]\]
exists and that $0\in D_{c}^o$, where $D_c\triangleq \sset{x\in \R^d,~c(x)<\infty}$. We also define the Fenchel-Legendre transform as
\[c^*(x)\triangleq \sup_{t\in \R^d}\pa{\sca{t,x} - c(t)}.\]
Then we have:
\begin{itemize}
    \item[$(i)$] 
    $\limsup_{n\rightarrow\infty}\frac{1}{n}\log \mu_n(F)\leq -\inf_{x\in F}c^*(x),~~\forall F\subset\mathbb{R}^d~ \text{ closed.}$
    \item[$(ii)$] Assume in addition that $c$ is lower semi-continuous on $\R^d$, differentiable on $D_c^o$, and either $D_c=\R^d$ or $c$ is steep, i.e., for all $y \in \partial D^o_c$,
\[\lim_{x\to y}\norm{\nabla c(x)}=\infty.\]
Then,    $\liminf_{n\rightarrow\infty}\frac{1}{n}\log \mu_n(G)\geq -\inf_{x\in G}c^*(x),~~ \forall G\subset\mathbb{R}^d$ open.
This means $\pa{\mu_n}$ satisfies an
LDP with rate function $c^*$.
\end{itemize}
\end{fact}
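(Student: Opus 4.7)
The plan is to establish parts $(i)$ and $(ii)$ by two classical techniques: the upper bound via an exponential Chebyshev inequality combined with a compact-covering argument, and the lower bound via exponential tilting. I sketch the key steps rather than verifying all details.

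For part $(i)$, the starting point is the Chernoff inequality: for every $t\in \R^d$ and every $a\in \R$, the closed half-space $H_{t,a}\triangleq\sset{x\in \R^d,~\sca{t,x}\geq a}$ satisfies
\[\mu_n\pa{H_{t,a}}\leq e^{-na}\int_{\R^d}e^{\sca{nt,x}}d\mu_n(x),\qquad\text{hence}\qquad \limsup_n\tfrac{1}{n}\log \mu_n\pa{H_{t,a}}\leq c(t)-a.\]
To treat a compact set $K$, around each $x_0\in K$ a supporting hyperplane of a sublevel set of $c^*$ at $x_0$ yields $t_{x_0}$ and a half-space $H_{x_0}$ containing a neighborhood of $x_0$ for which the above bound gives a contribution of at most $-\min\pa{c^*(x_0),\delta^{-1}}+\delta$. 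A finite cover of $K$ by such neighborhoods and a union bound produce the claim on $K$. Passing from compact to closed requires exponential tightness, which comes from $0\in D_c^{\mathrm{o}}$: choosing $t=\pm r e_i$ with $r>0$ small enough that $c(\pm re_i)<\infty$, Chernoff controls $\mu_n\pa{\sset{|x_i|>M}}$ at an exponential rate that diverges as $M\to \infty$.

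For part $(ii)$, it suffices to show that for every $x\in G$ and every small $\delta>0$,
\[\liminf_n\tfrac{1}{n}\log \mu_n\pa{B(x,\delta)}\geq -c^*(x).\]
When $x=\nabla c(t^*)$ for some $t^*\in D_c^{\mathrm{o}}$, differentiability gives $c^*(x)=\sca{t^*,x}-c(t^*)$. I introduce the tilted measures
\[d\tilde\mu_n(y)\triangleq\frac{e^{\sca{nt^*,y}}}{\int e^{\sca{nt^*,z}}d\mu_n(z)}d\mu_n(y),\]
whose normalized log-MGFs converge to $\tilde c(t)\triangleq c(t^*+t)-c(t^*)$; the Fenchel transform $\tilde c^*$ vanishes only at $x$, so applying the already-proved part $(i)$ to $(\tilde\mu_n)$ on the closed set $\R^d\setminus B(x,\delta)$ yields $\tilde\mu_n\pa{B(x,\delta)}\to 1$. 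Rewriting $\mu_n\pa{B(x,\delta)}$ in terms of $\tilde\mu_n$ and bounding $-\sca{t^*,y}\geq -\sca{t^*,x}-\delta\norm{t^*}$ on the ball produces
\[\tfrac{1}{n}\log \mu_n\pa{B(x,\delta)}\geq -\sca{t^*,x}-\delta\norm{t^*}+\tfrac{1}{n}\log\int e^{\sca{nt^*,z}}d\mu_n(z)+\tfrac{1}{n}\log\tilde\mu_n\pa{B(x,\delta)},\]
and letting $n\to\infty$ then $\delta\to 0$ gives $-c^*(x)$.

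The main obstacle is to realize \emph{every} $x\in D_{c^*}^{\mathrm{o}}$ as $\nabla c(t^*)$ for some $t^*\in D_c^{\mathrm{o}}$, since only then can the tilting argument be run. This is precisely where the steepness assumption is used: combined with convexity of $c$ (inherited as a pointwise limit of log-MGFs), steepness guarantees, via a standard result of convex analysis, that $\nabla c$ is a bijection from $D_c^{\mathrm{o}}$ onto $D_{c^*}^{\mathrm{o}}$. For $x\in G\setminus D_{c^*}^{\mathrm{o}}$ with $c^*(x)<\infty$, $x$ is approximated by points of $G\cap D_{c^*}^{\mathrm{o}}$, and lower semicontinuity of $c^*$ closes the argument. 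Verifying this surjectivity of $\nabla c$ under steepness is the delicate convex-analytic point and is what distinguishes Gärtner--Ellis from the simpler Cramér setting where $D_c=\R^d$ automatically.
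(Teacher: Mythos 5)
The paper states this result as a cited Fact, attributing it to Gärtner and Ellis, and gives no internal proof; there is therefore no in-paper argument to compare against. Your sketch follows the standard textbook route (Chernoff bound and finite covering for compacts, exponential tightness from $0\in D_c^{\mathrm{o}}$ to extend to closed sets, and exponential tilting together with essential smoothness for the lower bound), and at the level of a sketch this is the correct strategy.

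One detail at the very end is backwards and would not close the argument as written. You say that for $x\in G$ not realized as $\nabla c(t^*)$, you approximate $x$ by $y_k\in G\cap D_{c^*}^{\mathrm{o}}$ and "lower semicontinuity of $c^*$ closes the argument." But lower semicontinuity gives $\liminf_k c^*(y_k)\geq c^*(x)$, which is the wrong direction: to transfer the tilting lower bound from $y_k$ to $x$ you need $\limsup_k c^*(y_k)\leq c^*(x)$ along some approximating sequence. The right tool is convexity of $c^*$: fix $z$ in the interior of $D_{c^*}$ and set $y_\epsilon=(1-\epsilon)x+\epsilon z$, so that $c^*(y_\epsilon)\leq (1-\epsilon)c^*(x)+\epsilon c^*(z)\to c^*(x)$. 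Relatedly, the claim that steepness makes $\nabla c$ a bijection from $D_c^{\mathrm{o}}$ onto $D_{c^*}^{\mathrm{o}}$ is stronger than what essential smoothness alone provides without strict convexity; the standard argument only needs that exposed points of $c^*$ whose exposing hyperplane lies in $D_c^{\mathrm{o}}$ are dense in (the relative interior of) $D_{c^*}$, which is what the Rockafellar theory of essentially smooth convex conjugates gives. These are genuine delicacies of the Gärtner--Ellis proof; they do not invalidate your strategy, but the final paragraph of your sketch needs to be repaired along these lines.
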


\begin{lemma}[LDP for the gamma distribution]\label{lem:LDP_gamma}
Let $Z_n\sim\text{Gamma}(k_n,1)$, with $(k_n)\in  \pa{\R_+^*}^\N$ such that
\[\frac{k_n}{n}\to k\in \R_+.\]
Then, $\pa{Z_n/n}$  satisfies point $(i)$ of an LDP in $\R$ with the good rate function 
\[I(x)=\left\{
    \begin{array}{ll}
        x-k +k\log\pa{\frac{k}{x}} & \mbox{if } x>0 \\
         0 & \mbox{if }x=k=0 \\
        \infty & \mbox{otherwise.}
    \end{array}
\right.\]
Furthermore, if \[\frac{\log(\min\pa{k_n, 1})}{n}\to 0,\] 
then $\pa{Z_n/n}$ also satisfies point $(ii)$.
\end{lemma}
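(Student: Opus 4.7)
The plan is to combine the Gärtner-Ellis theorem (Fact~\ref{fact:ellis}) with the explicit CGF of the gamma distribution (Fact~\ref{fact:camulant_gamma}).

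First, from Fact~\ref{fact:camulant_gamma} and the hypothesis $k_n/n\to k$, the normalized log-MGF of $Z_n/n$ converges pointwise to
\[
c(t)=\begin{cases}-k\log(1-t)&\text{if }t<1,\\ \infty&\text{if }t\geq 1,\end{cases}
\]
with $0\in D_c^o=(-\infty,1)$. A short case analysis computes the Fenchel-Legendre transform $c^*(x)=\sup_{t\in\R}(tx-c(t))$: for $x>0$ the supremum is attained at $t^*=1-k/x$ when $k>0$ (yielding $c^*(x)=x-k+k\log(k/x)$) and is approached as $t\to 1^-$ when $k=0$ (yielding $c^*(x)=x$); for $x<0$, or for $x=0$ with $k>0$, sending $t\to-\infty$ forces $c^*=\infty$; and $c^*(0)=0$ when $k=0$. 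Hence $c^*=I$.

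Point $(i)$ then follows immediately from Fact~\ref{fact:ellis}(i), with no additional assumption on $(k_n)$.

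For point $(ii)$ I would split on $k$. If $k>0$, then $c$ is lower semi-continuous on $\R$ (with $c(1)=\infty=\lim_{t\to 1^-}c(t)$), differentiable on $D_c^o$, and steep since $c'(t)=k/(1-t)\to\infty$ as $t\to 1^-$; thus Fact~\ref{fact:ellis}(ii) applies, and the side condition $\log\min(k_n,1)/n\to 0$ is automatic because $k_n\to\infty$. If $k=0$, however, $c\equiv 0$ on $D_c^o$ is flat, steepness fails, and Gärtner-Ellis does not supply the lower bound. In this case I would argue directly from the density of $Z_n/n$, namely $p_n(y)=n^{k_n}\Gamma(k_n)^{-1}y^{k_n-1}e^{-ny}$ on $(0,\infty)$. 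The hypothesis $\log\min(k_n,1)/n\to 0$ is precisely what tames the normalization: since $\Gamma(k_n)\sim 1/k_n$ as $k_n\to 0^+$, it gives $\log\Gamma(k_n)/n\to 0$, and hence $\tfrac{1}{n}\log p_n(y)\to -y=-I(y)$ uniformly on compact subsets of $(0,\infty)$. A standard shell estimate then yields the lower bound for any open $G\subset\R$: pick $x_0\in G\cap(0,\infty)$ with $I(x_0)=x_0$ arbitrarily close to $\inf_G I$, take a small interval $[x_0-\delta,x_0+\delta]\subset G$, and lower bound $P(Z_n/n\in G)$ by $2\delta\cdot\min_{[x_0-\delta,x_0+\delta]}p_n$; if instead $0\in G$, then $\inf_G I=0$ and Markov's inequality applied to $\mathbb{E}[Z_n/n]=k_n/n\to 0$ gives $P(Z_n/n\in G)\to 1$.

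The main obstacle is the case $k=0$: the flatness of $c$ kills steepness and rules out a direct invocation of Gärtner-Ellis for the lower bound, so one must pass through the explicit gamma density. This is exactly where the extra assumption $\log\min(k_n,1)/n\to 0$ is needed, serving as a quantitative control on how fast $k_n$ may approach $0$ by forcing $\log\Gamma(k_n)/n\to 0$.
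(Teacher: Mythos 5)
Your argument follows the paper's own route: the explicit gamma CGF from Fact~\ref{fact:camulant_gamma} combined with G\"artner--Ellis (Fact~\ref{fact:ellis}) gives point $(i)$ for every $k\geq 0$ and point $(ii)$ for $k>0$ (where steepness holds), while the degenerate case $k=0$, where $c$ is flat on $(-\infty,1)$ and steepness fails, is handled by a direct estimate on the gamma density. Your Fenchel--Legendre computation $c^*=I$ and your treatment of $k>0$ are correct, and your observation that the extra hypothesis is automatic when $k>0$ is right.

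There is, however, a gap in the $k=0$ case. You justify $\frac{1}{n}\log p_n(y)\to -y$ by asserting $\log\Gamma(k_n)/n\to 0$, appealing to $\Gamma(k_n)\sim 1/k_n$ as $k_n\to 0^+$. But $k_n/n\to 0$ does not force $k_n\to 0$: the sequence $k_n$ may stay bounded away from $0$ or even diverge slowly. For instance $k_n=n/\sqrt{\log n}$ satisfies $k_n/n\to 0$ and $\log\min(k_n,1)/n=0$ eventually, so all hypotheses of the lemma with $k=0$ hold, yet by Stirling $\log\Gamma(k_n)/n\sim k_n\log k_n/n\sim\sqrt{\log n}\to\infty$, and at the same time $k_n\log n/n\sim\sqrt{\log n}\to\infty$. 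Thus both terms in $\frac{1}{n}\log\bigl(n^{k_n}/\Gamma(k_n)\bigr)$ can blow up individually; the limit $\frac{1}{n}\log p_n(y)\to -y$ is still correct, but only because they cancel: $\frac{k_n\log n-\log\Gamma(k_n)}{n}=\frac{k_n}{n}\bigl(\log(n/k_n)+1\bigr)+o(1)$, which vanishes since $a\log(1/a)\to 0$ as $a\to 0^+$. The paper's proof avoids this pitfall by never isolating $\log\Gamma(k_n)/n$: after bounding the integral it keeps the term $\frac{k_n}{n}\log(nx/k_n)$ intact, which is visibly of the form $a_n\log(1/a_n)+a_n\log x$ with $a_n=k_n/n\to 0$, and invokes $\log\min(k_n,1)/n\to 0$ only to control the residual factor when $k_n$ is small. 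To repair your version, either split into the regimes $k_n$ bounded versus $k_n\to\infty$, or estimate the combined prefactor $\frac{1}{n}\log\bigl(n^{k_n}/\Gamma(k_n)\bigr)$ directly via Stirling rather than asserting $\log\Gamma(k_n)/n\to 0$ on its own.
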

\begin{proof}
We can explicitly express the CGF of $Z_n$ using Fact~\ref{fact:camulant_gamma}. Then, for $\mu_n=\PP{Z_n/n\in\cdot}$, we see that the limit condition in Fact~\ref{fact:ellis} is satisfied: For all $t<1$,
\[c(t)=\lim_{n\to\infty}\frac{-k_n}{n}\log\pa{1-t} = -k\log(1-t).\]
See also that the convex conjugate of the function $c$ is $I$, i.e., $c^*=I$. For $k>0$, $c$ also satisfies the conditions of $(ii)$ in Fact~\ref{fact:ellis}, but for $k=0$, the steepness condition is not satisfied.
From here, we already have that point $(i)$ of an LDP is satisfied for all $k\geq 0$, and that point $(ii)$ is satisfied for $k>0$.
For $k=0$, we have to treat the lower bound separately: We have $I(x)=x+\infty\II{x< 0}$. Consider $G\subset \R$, open. 
If $G\cap \R_+^* = \emptyset$, then $G\subset (-\infty,0)$ and $I=\infty$ on~$G$, so $-\inf_{x\in G}I(x)=-\infty$ and the desired lower bound is trivial. Else, we clearly see that
$\inf_{x\in G} I(x)=\inf_{x\in G\cap \R_+^*} I(x)\geq 0$. 
Let $x$ be in the open set $G\cap \R_+^*$. Then, there is a sub-interval of the form $(x-r,x+r)\subset G\cap \R_+^*$, with $r > 0$. We have
    \begin{align*}
    \PP{Z_n/n\in G}&\geq \PP{Z_n/n\in G\cap \R_+^*} 
    \\&\geq 
    \PP{Z_n/n\in (x,x+r)}
    \\&=
    {\frac{1}{\Gamma(k_n)}\int_{nx}^{n(x+r)}e^{-u}u^{k_n-1}du}
    \\&\geq
    {\frac{(nx)^{k_n-1}}{\Gamma(k_n)}\int_{nx}^{n(x+r)}e^{-u}du}
    \\&=
    {\frac{(nx)^{k_n-1}}{\Gamma(k_n)}e^{-nx}\pa{1-e^{-nr}}}
    \\&\geq
    {\frac{(nx)^{k_n-1}}{\max (k_n^{-1}, k_n^{k_n-1})}e^{-nx}\pa{1-e^{-nr}}}
    \\&=
    {{\min (k_n, 1)^{k_n}}\frac{k_n}{nx}\pa{\frac{nx}{k_n}}^{k_n}}{}e^{-nx}\pa{1-e^{-nr}}
    \\&\geq 
        {\frac{{\min (k_n, 1)}^2}{nx}\pa{\frac{nx}{k_n}}^{k_n}}{}e^{-nx}\pa{1-e^{-nr}}
    .
    \end{align*}
    Now, looking at $\frac{1}{n}\log\PP{Z_n/n\in G}$, we get the lower bound
    \[-x + \underbrace{\frac{\log\pa{1-e^{-nr}}}{n}}_{\to 0} + \underbrace{\frac{k_n}{n}\log\pa{\frac{nx}{k_n}}}_{\to -k\log(k/x)= 0} 
    +
    \underbrace{\frac{1}{n}\log\pa{\frac{1}{nx}}}_{\to 0}
    +
    \underbrace{\frac{2\log\pa{{\min (k_n, 1)}}}{n}}_{\to 0},\]
    i.e., $\liminf_{n\to\infty}\frac{1}{n}\log\PP{Z_n/n\in G}\geq -x = -I(x)$. We get the desired result by taking the $\sup$ on $x\in G\cap \R_+^*$. 
Notice that we used here the assumption ${\log(\min (k_n, 1))}/{n}\to 0$, i.e., that $k_n$ does not decay exponentially in $n$.
\end{proof}

\subsection{LDP for the Dirichlet distribution}

\begin{fact}[Joint LDP in $\R^d$ \cite{chaganty1997large}]\label{fact:joint}
Let $(Y_{1,n},\dots,Y_{d,n})$ be independent random variable in $\R$. If for each $i$, $(Y_{i,n})$ satisfies point $(i)$ (respectively point $(ii)$) of an LDP with good rate function $I_i$, then  $\pa{(Y_{i,n})_i}\in \pa{\R^d}^\N$ satisfies point $(i)$ (respectively point $(ii)$) of an LDP with good rate function $\sum_{i\in [d]} I_i$.
\end{fact}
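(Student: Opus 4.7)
The plan is to prove the lower bound (point $(ii)$) and the upper bound (point $(i)$) separately, in both cases exploiting the factorization of joint probabilities over product sets that comes from independence. The lower bound is elementary: for any open $G\subset \R^d$ and any $x\in G$, I can pick $\delta>0$ so that the open box $\prod_i (x_i-\delta,x_i+\delta)$ lies inside $G$, and independence gives
\[
\PP{(Y_{i,n})_i\in G}\geq \prod_{i\in [d]}\PP{Y_{i,n}\in (x_i-\delta,x_i+\delta)}.
\]
Taking $\tfrac{1}{n}\log$, using that $\liminf$ is superadditive on finite sums (each term is non-positive, hence bounded above), and invoking the one-dimensional LDP lower bound coordinate by coordinate, I obtain $\liminf_n \tfrac{1}{n}\log \PP{(Y_{i,n})_i\in G}\geq -\sum_i I_i(x_i)$, and a supremum over $x\in G$ closes this half.

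The upper bound is more delicate because closed subsets of $\R^d$ need not be compact, so I plan to first reduce to the compact case via exponential tightness. Goodness of each $I_i$ means that $\{I_i\leq L\}$ is a compact, hence bounded, subset of $\R$; applying the one-dimensional upper bound to the closed complement of a slightly enlarged interval yields, for every $L>0$, a compact $K_L^{(i)}\subset \R$ with $\limsup_n\tfrac{1}{n}\log \PP{Y_{i,n}\notin K_L^{(i)}}\leq -L$. A union bound gives the same estimate for $\PP{(Y_{i,n})_i\notin K_L}$, where $K_L\triangleq \prod_i K_L^{(i)}$ is compact in $\R^d$. Splitting an arbitrary closed $F$ as $(F\cap K_L)\cup (F\setminus K_L)$, the lower semicontinuity of each $I_i$ together with compactness of $F\cap K_L$ allows a finite covering by closed boxes on which $\sum_i \inf I_i(\cdot)\geq \sum_i I_i(x^{(k)}_i)-\varepsilon$, and independence factorizes the joint probability of each box into a product of one-dimensional probabilities. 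Since finitely many summands are absorbed under $\limsup \tfrac{1}{n}\log$, I conclude that $\limsup_n \tfrac{1}{n}\log \PP{(Y_{i,n})_i\in F}$ is bounded by $-\inf_{x\in F}\sum_i I_i(x_i)+\varepsilon$ up to the tightness term $-L$; letting $L\to \infty$ and $\varepsilon\to 0$ completes the argument.

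The main obstacle is the exponential-tightness step, which is exactly the role played by the assumption of \emph{good} rate functions rather than merely lower semicontinuous ones. Everything else is standard bookkeeping: finite covers from compactness, the fact that finite unions of exponential-scale probabilities reduce to the maximum under $\limsup \tfrac{1}{n}\log$, and the observation that $\sum_i I_i(x_i)$ is itself automatically a good rate function because its level sets are contained in products of the compact level sets of the $I_i$, yielding the goodness of the limiting rate function at no extra cost.
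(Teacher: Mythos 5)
The paper states this result as a citation to \cite{chaganty1997large} without giving its own proof, so there is no internal argument to compare against; your proof is a correct, self-contained rendering of the standard textbook argument (lower bound via product boxes and independence, upper bound via goodness $\Rightarrow$ exponential tightness in the locally compact space $\R^d$, a finite-cover argument on the compact part, and union-bound absorption of the tightness tail). One small point worth keeping in mind: the step ``good rate function $+$ upper bound $\Rightarrow$ exponential tightness'' is specific to locally compact spaces such as $\R$, which is exactly the setting here, so your use of a slightly enlarged closed interval and its closed complement is a legitimate way to convert $\inf_{\{I_i>L\}}I_i\ge L$ into the required estimate.
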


\begin{fact}[Contraction principle \cite{dembo2009large}]\label{fact:contraction}  
If $(\mu_n)$ satisfies point $(i)$ (respectively point $(ii)$) of an LDP with good rate $I$, then $\pa{\mu_n\pa{T^{-1}(\cdot)}}$, where $T$ is continuous, satisfies point $(i)$ (respectively point $(ii)$) of an LDP with good rate
\[J(y)=\inf_{x \in T^{-1}(\sset{y})}I(x).\]
\end{fact}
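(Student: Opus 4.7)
The plan is to prove the contraction principle by transporting the LDP along $T$ using preimages, which is the natural maneuver since $T$ is continuous and hence $T^{-1}$ preserves open and closed sets. The core identity driving the argument is
\[\inf_{x \in T^{-1}(A)} I(x) \;=\; \inf_{y \in A} J(y),\]
which follows from unfolding the definition of $J$ and splitting the infimum over $x \in T^{-1}(A)$ according to the fibers $T^{-1}(\{y\})$ for $y \in A$. Together with the identity $\mu_n(T^{-1}(\cdot))(A) = \mu_n(T^{-1}(A))$, this is essentially all that is needed once the set-theoretic bookkeeping is set up correctly.

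For point $(i)$, given a closed set $F$ in the codomain, I first observe that $T^{-1}(F)$ is closed by continuity of $T$. Applying the LDP upper bound for $(\mu_n)$ with the rate $I$ yields
\[\limsup_n \tfrac{1}{n}\log \mu_n\bigl(T^{-1}(F)\bigr) \leq - \inf_{x \in T^{-1}(F)} I(x) = -\inf_{y \in F} J(y),\]
which is exactly point $(i)$ for the pushforward with rate $J$. For point $(ii)$, I repeat the argument with an open set $G$: since $T^{-1}(G)$ is open, the LDP lower bound for $(\mu_n)$ gives
\[\liminf_n \tfrac{1}{n}\log \mu_n\bigl(T^{-1}(G)\bigr) \geq - \inf_{x \in T^{-1}(G)} I(x) = -\inf_{y \in G} J(y).\]

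It then remains to check that $J$ is a (good) rate function. Lower semicontinuity is equivalent to each sublevel set $\{J \leq t\}$ being closed; here I use that $\{J \leq t\} = T(\{I \leq t\})$. The inclusion $\supseteq$ is immediate from $J(T(x)) \leq I(x)$; for $\subseteq$, the only subtlety is whether the infimum defining $J$ is attained, which is where goodness of $I$ comes in: if $J(y) \leq t$, the set $T^{-1}(\{y\}) \cap \{I \leq t+\varepsilon\}$ is the intersection of a closed set with a compact one, hence compact, and the lower semicontinuous $I$ attains its minimum there, giving an $x \in T^{-1}(\{y\}) \cap \{I \leq t\}$. Since $T$ is continuous and $\{I \leq t\}$ is compact, $T(\{I \leq t\})$ is compact, hence closed, so $J$ is lower semicontinuous with compact sublevel sets, i.e., $J$ is a good rate function.

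The main (minor) obstacle is this last point: handling the fact that the infimum defining $J(y)$ need not be attained a priori if $I$ were merely lower semicontinuous without compact sublevel sets. The goodness hypothesis on $I$ is exactly what allows one to reduce each fiber-wise minimization to a problem on a compact set where lower semicontinuity guarantees attainment, and this in turn is what makes $\{J \leq t\} = T(\{I \leq t\})$ hold as a genuine equality rather than just an inclusion. Beyond this, the argument is essentially a direct transport of the LDP through continuous preimages.
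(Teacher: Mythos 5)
The paper cites this as a known fact from Dembo--Zeitouni and does not supply a proof, so there is no internal argument to compare against. Your proposal is correct and reproduces the standard textbook proof of the contraction principle: transporting the LDP upper/lower bounds through $T^{-1}$ using the identity $\inf_{T^{-1}(A)} I = \inf_A J$, then establishing goodness of $J$ via $\{J \leq t\} = T(\{I \leq t\})$, where the nontrivial inclusion uses compactness of the sublevel sets of $I$ (together with lower semicontinuity, or a direct minimizing-sequence argument) to show the infimum over each fiber is attained within $\{I \leq t\}$.
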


\begin{fact}[Varadhan’s integral lemma \cite{Varadhan1966asymptotic}]
\label{fact:varadhan}
Let $(Z_n)\in \pa{\R^d}^\N$ be sequence of random vectors  satisfying an LDP with a
good rate function $I:\R^d\to [0,\infty]$ and let $\phi\in \cC(\R^d)$. Assume either the tail condition
\[\lim_{M\to\infty}\limsup_n \frac{1}{n}\log\EE{e^{n\phi(Z_n)}\II{\phi(Z_n)\geq M}} = -\infty\]
or the following moment condition for some $\gamma>1$,
\[\limsup_n \frac{1}{n}\log\EE{e^{n\gamma\phi(Z_n)}}<\infty.\]
Then
\[\lim_{n\to\infty}\frac{1}{n}\log\EE{e^{n\phi(Z_n)}} = \sup_{x\in \R^d}\pa{\phi(x) - I(x)}.\]
\end{fact}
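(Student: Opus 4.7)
The plan is to establish the two matching inequalities
\[\limsup_{n\to\infty}\frac{1}{n}\log\EE{e^{n\phi(Z_n)}}\leq \sup_{x\in\R^d}\pa{\phi(x)-I(x)}\leq \liminf_{n\to\infty}\frac{1}{n}\log\EE{e^{n\phi(Z_n)}}\]
separately. As a preliminary reduction, I would first show that the moment condition implies the tail condition, so the core argument can be carried out under the tail condition alone. Combining H\"older's inequality with the Markov-type bound $\PP{\phi(Z_n)\geq M}\leq e^{-nM}\EE{e^{n\phi(Z_n)}}$ (and Jensen on the latter expectation) gives
\[\EE{e^{n\phi(Z_n)}\II{\phi(Z_n)\geq M}}\leq \EE{e^{n\gamma\phi(Z_n)}}^{1/\gamma}\PP{\phi(Z_n)\geq M}^{(\gamma-1)/\gamma},\]
whose normalized logarithm is bounded above by a constant depending only on $\gamma$ and the moment hypothesis, minus $M(\gamma-1)/\gamma$, which tends to $-\infty$ as $M\to\infty$.

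For the lower bound, I would use the continuity of $\phi$: for any $x$ with $I(x)<\infty$ and any $\delta>0$, there exists $r>0$ such that $\phi\geq \phi(x)-\delta$ on the open ball $B(x,r)$, so $\EE{e^{n\phi(Z_n)}}\geq e^{n(\phi(x)-\delta)}\PP{Z_n\in B(x,r)}$. Applying the LDP lower bound on the open set $B(x,r)$ yields $\liminf_n n^{-1}\log\EE{e^{n\phi(Z_n)}}\geq \phi(x)-\delta-I(x)$; then $\delta\to 0$ and the supremum over $x$ (points with $I(x)=\infty$ being automatically dominated) deliver the claimed lower bound.

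For the upper bound, I would fix $\delta,\delta'>0$, a large threshold $M$, and a level $\alpha>\sup_x(\phi(x)-I(x))$. The tail condition disposes of the contribution from $\sset{\phi(Z_n)\geq M}$. On the complement, I would leverage the goodness of $I$: the sublevel set $K_\alpha=\sset{I\leq \alpha}$ is compact, so by continuity of $\phi$ and lower semi-continuity of $I$ it admits a finite cover by open balls $B(x_1,r_1),\dots,B(x_N,r_N)$ with $\phi\leq \phi(x_j)+\delta$ on each $B(x_j,r_j)$ and $\inf_{\bar B(x_j,r_j)}I\geq I(x_j)-\delta'$. Setting $A=\bigcup_j B(x_j,r_j)$, the closed set $A^c$ satisfies $\inf_{A^c}I\geq \alpha$, so the LDP upper bound gives $\PP{Z_n\in A^c}\leq e^{-n\alpha+o(n)}$; combined with $e^{n\phi(Z_n)}\leq e^{nM}$ on $\sset{\phi<M}$, the contribution of $A^c\cap\sset{\phi<M}$ is at most $e^{n(M-\alpha)+o(n)}$, which is negligible once $\alpha\gg M$. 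On each ball, the LDP upper bound controls $\PP{Z_n\in \bar B(x_j,r_j)}$, producing a per-ball bound of $\phi(x_j)+\delta-I(x_j)+\delta'\leq \sup_x\pa{\phi(x)-I(x)}+\delta+\delta'$. Summing the $N+2$ pieces and using that the logarithm of a finite sum of exponentials equals, up to a vanishing $O(\log(N+2)/n)$ term, the maximum exponent, then letting $\delta,\delta'\to 0$ and $M,\alpha\to\infty$ in this order, yields the upper bound.

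The main difficulty lies in the orchestration of the limits $\delta,\delta',M,\alpha$ in the upper bound so that every error term is eventually dominated by $\sup_x\pa{\phi(x)-I(x)}$; this requires simultaneously invoking the goodness of $I$ (to compactify and cover), the continuity of $\phi$ (to localize its values), the LDP upper bound on closed sets (to bound individual ball probabilities and the complement), and the tail/moment condition (to discard the regime of large $\phi$). The lower bound, by contrast, is essentially immediate from the LDP lower bound and continuity of $\phi$ alone.
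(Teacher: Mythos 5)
The paper does not prove this statement: it is presented as a cited classical result (Varadhan's integral lemma, attributed to \cite{Varadhan1966asymptotic}), so there is no in-paper proof to compare against. Your proposal is a correct sketch of the standard textbook argument (essentially the one in Dembo--Zeitouni): the reduction of the moment condition to the tail condition via H\"older/Markov/Jensen is sound; the lower bound from the LDP open-set lower bound and continuity of $\phi$ is correct; and the upper bound by splitting into the large-$\phi$ regime, the complement $A^c$ of a finite open cover of the compact sublevel set $K_\alpha$, and the finitely many balls, is the right decomposition. Two small points worth making explicit if you wrote this out fully: you should take the ball centers $x_j$ inside $K_\alpha$ so that $I(x_j)<\infty$ and the lower-semicontinuity bound $\inf_{\bar B(x_j,r_j)}I\geq I(x_j)-\delta'$ is meaningful; and the limits should be taken in the order \(n\to\infty\) (for fixed $\delta,\delta',M,\alpha$, yielding a bound of the form $\max(\text{tail}(M),\,M-\alpha,\,\sup_x(\phi-I)+\delta+\delta')$ with the $\log(N+2)/n$ term already gone), then $\delta,\delta'\to0$, then $\alpha\to\infty$ (killing $M-\alpha$), then $M\to\infty$ (killing the tail term); the number of balls $N$ may grow as $\delta,\delta'\to 0$, but this is harmless since $n\to\infty$ is taken first. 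With these clarifications the argument is complete.
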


\begin{proposition}\label{prop:Kl-gamma} Let $x\in \cM_1\pa{[d]}$ and for all $i\in [d],$ let
    \[f_i(t)=\left\{
    \begin{array}{ll}
\infty & \mbox{if } t< 0~\text{or}~t=0<x_i\\
        0 & \mbox{if } t=0=x_i\\
        t-x_i+x_i\log\pa{{x_i}/{t}} & \mbox{if } t>0.\\
    \end{array}
\right.\]
We define
    \[ I:y\in \R^d\mapsto \inf \sset{\sum_{i\in [d]} f_i(z_i),~\frac{z_i}{\sum_{j\in [d]}z_j}=y_i~\forall i\in [d]}.\]
    Then, we have
    \[I(y)=\left\{
    \begin{array}{ll}
        \KL{x}{y} & \mbox{if } y \in  \cM_1\pa{[d]}\\
        \infty & \mbox{otherwise.}
    \end{array}
\right.\]
\end{proposition}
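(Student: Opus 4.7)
The plan is to explicitly parametrize the feasible set of the infimum and reduce the problem to a one-dimensional minimization. First, I would observe that any feasible $z = (z_1,\dots,z_d)$ must satisfy $z_i \geq 0$ (otherwise $f_i(z_i)=\infty$), so if the feasible set contains any point with finite cost, then $y_i = z_i/\sum_j z_j$ forces $y\in \mathcal{M}_1([d])$. Contrapositively, $y\notin \mathcal{M}_1([d])$ implies $I(y)=\infty$, matching the convention $\KL{x}{y}=\infty$ for such $y$.

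For $y\in \mathcal{M}_1([d])$, I would set $s\triangleq \sum_j z_j > 0$, which turns the constraint $z_i/\sum_j z_j = y_i$ into $z_i = y_i s$, so that the infimum runs over a single parameter $s>0$. Before minimizing, I handle the degeneracy: if there exists $i$ with $y_i = 0 < x_i$, then $z_i = 0$ forces $f_i(z_i) = \infty$, giving $I(y)=\infty$, which again matches $\KL{x}{y}=\infty$. Otherwise $x \ll y$ in the usual sense, and letting $S\triangleq \{i: y_i>0\}$, the terms with $i\notin S$ contribute $f_i(0)=0$.

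For the remaining terms, I would expand
\[
\sum_{i\in S} f_i(y_i s) = \sum_{i\in S}\Bigl(y_i s - x_i + x_i\log\tfrac{x_i}{y_i s}\Bigr) = s - 1 - \log(s) + \sum_{i\in S} x_i\log\tfrac{x_i}{y_i},
\]
using $\sum_{i\in S} y_i = 1$ and $\sum_{i\in S} x_i = 1$ (the latter because $x_i=0$ for $i\notin S$). The last sum is exactly $\KL{x}{y}$, and the scalar function $s\mapsto s - 1 - \log(s)$ attains its unique minimum $0$ at $s=1$. Taking the infimum over $s>0$ yields $I(y)=\KL{x}{y}$, as desired.

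The argument is essentially a direct computation; the only mildly delicate step is the bookkeeping for the boundary cases $y_i=0$ and $x_i=0$, which must be checked carefully so that the convention on $f_i$ for $t=0$ lines up with the usual conventions defining $\KL{x}{y}$. Once those cases are aligned, the one-dimensional minimization in $s$ is immediate.
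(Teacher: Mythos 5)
Your proof is correct and takes essentially the same approach as the paper: reduce the constraint to the one-dimensional family $z = s\,y$ (the paper writes $z = \beta y$), handle the boundary cases where $I$ and $\KL{x}{\cdot}$ are both $\infty$, and minimize over the scale. The only cosmetic difference is that you expand the objective explicitly as $s - 1 - \log s + \KL{x}{y}$, whereas the paper locates the critical point $\beta = 1$ by differentiating term by term; both give the same one-line minimization.
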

\begin{proof}
If there is some $j$ such that $y_j<0$, then any $z$ included in the infimum must have some $z_i<0$, so $I(y) = \infty$. If $\sum_{j\in [d]} y_j \neq 1$, then there is no $z\in \R^d$ such that $y_i=z_i/\sum_{j\in [d]} z_j$ for all $i\in [d]$ and $I(y)=\inf \emptyset = \infty$. Hence, we consider $y\in \cM_1\pa{[d]}$. If there is some $j$ such that $y_j=0<x_j$, then $\KL{x}{y}=\infty$ by definition. On the other hand, any $z$ included in the infimum is such that $z_j=0$, meaning that $I(y) = \infty$.
Now, if $y$ is such that for all $i\in [d]$, we have $y_i=0 \Rightarrow x_i=0$, then
\[I(y)=\inf \sset{\sum_{i\in [d],~y_i>0} f_i(z_i),~\frac{z_i}{\sum_{j\in [d]}z_j}=y_i~\forall i\in [d]}.\]
The constraint is equivalent to $z=\beta y$, for $\beta>0$. Setting the derivative of \[\beta\mapsto\sum_{i\in [d],~y_i>0} f_i(\beta y_i)\] equal to $0$ gives
\[0=\sum_{i\in [d],~y_i>0}\pa{y_i-\frac{x_i}{\beta}}=1-\frac{1}{\beta},\]
where the last equality uses that $x,y\in \cM_1\pa{[d]}$. Thus, the infimum is achieved at $\beta=1$ as each $f_i$ is convex. This gives
\[I(y)= \sum_{i\in [d],~y_i>0} f_i(y_i) = \sum_{i\in [d],~y_i>0} x_i\log(x_i/y_i) = \KL{x}{y}.\]
\end{proof}

\begin{theorem}[LDP for the Dirichlet distribution]\label{thm:ldp_dir}
    Let $(k_n) = ((k_{i,n})_{i\in [d]})\in \pa{{\R_+^*}^d}^\N$ such that
\[\frac{k_n}{n}\to x\in \cM_1\pa{[d]},\]
and let $Y_n\sim\text{Dir}\pa{k_{1,n},\dots,k_{d,n}}$.
Then, $(Y_n)$ satisfies point $(i)$ of an LDP in $\R^d$ with the good rate function
\[I(y)=\left\{
    \begin{array}{ll}
        \KL{x}{y} & \mbox{if } y \in  \cM_1\pa{[d]}\\
        \infty & \mbox{otherwise.}
    \end{array}
\right.\]
Furthermore, if for all $i\in [d]$ \[\frac{\log(\min\pa{k_{i,n}, 1})}{n}\to 0,\] 
then $(Y_n)$ also satisfies point $(ii)$.
\end{theorem}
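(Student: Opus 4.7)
The plan is to lift the one-dimensional Gamma LDP of Lemma~\ref{lem:LDP_gamma} to the Dirichlet setting by combining the Gamma representation of the Dirichlet with the product LDP of Fact~\ref{fact:joint}, the contraction principle of Fact~\ref{fact:contraction}, and the explicit computation of the contracted rate function already carried out in Proposition~\ref{prop:Kl-gamma}. Concretely, I would introduce independent $Z_{i,n}\sim\text{Gamma}(k_{i,n},1)$, collect them into $Z_n=(Z_{1,n},\dots,Z_{d,n})$, and exploit the standard identity $Y_n = Z_n/\sum_{j\in [d]} Z_{j,n}$. By Lemma~\ref{lem:LDP_gamma}, each $(Z_{i,n}/n)$ satisfies point $(i)$ of an LDP on $\R$ with good rate $f_i$, and also point $(ii)$ under the additional assumption that $\log(\min(k_{i,n},1))/n \to 0$. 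Fact~\ref{fact:joint} then upgrades this to a joint LDP for $(Z_n/n)$ on $\R^d$ with good rate $J(z)=\sum_{i\in [d]} f_i(z_i)$.

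Next, I would apply the contraction principle to the projection $T:z\mapsto z/\sum_{j\in [d]} z_j$, which pushes $Z_n/n$ forward onto $Y_n$. The resulting rate function $I(y)=\inf_{T(z)=y} J(z)$ coincides with the object computed in Proposition~\ref{prop:Kl-gamma}, namely $\KL{x}{y}$ on $\cM_1([d])$ and $\infty$ elsewhere, which is precisely the rate in the statement. Since $\cM_1([d])$ is compact and $I$ is lower semicontinuous on it, $I$ is automatically a good rate function.

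The main obstacle is that $T$ is only continuous on the open cone $U\triangleq\sset{z\in\R^d,~\sum_{j\in [d]} z_j>0}$, not on all of $\R^d$, so Fact~\ref{fact:contraction} cannot be invoked verbatim. To handle this, I would observe that the effective domain of $J$ is contained in $U$: since $x\in\cM_1([d])$ has at least one positive coordinate $x_{i_0}$, we have $f_{i_0}(0)=\infty$, hence $J(0)=\infty$, and $J$ is also infinite outside $[0,\infty)^d$. For the upper bound, given a closed $F\subset\R^d$, I would write $\PP{Y_n\in F}=\PP{Z_n/n\in T^{-1}(F)\cap U}$ and use that the closure of $T^{-1}(F)\cap U$ in $\R^d$ adds at most the point $0$, where $J=\infty$, so the joint-LDP upper bound yields $\limsup \frac{1}{n}\log\PP{Y_n\in F}\leq -\inf_{z\in T^{-1}(F)\cap U} J(z)=-\inf_{y\in F} I(y)$. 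For the lower bound, given an open $G\subset\R^d$ meeting $\sset{I<\infty}$, the set $T^{-1}(G)\cap U$ is open in $\R^d$ (by continuity of $T$ on $U$) and contains, for every $y\in G\cap\cM_1([d])$, the point $z=y$ at which $J(y)=I(y)$ by Proposition~\ref{prop:Kl-gamma}; the joint-LDP lower bound then delivers the matching estimate, closing the argument.
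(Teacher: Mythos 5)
Your proposal follows essentially the same route as the paper: Gamma representation of the Dirichlet, one-dimensional Gamma LDP (Lemma~\ref{lem:LDP_gamma}), joint LDP via Fact~\ref{fact:joint}, contraction through the normalization map, and identification of the contracted rate with $\KL{x}{\cdot}$ via Proposition~\ref{prop:Kl-gamma}. The one place you go beyond the paper is in noting that the normalization map $T$ fails to be continuous (indeed, is undefined) on the hyperplane $\sum_j z_j=0$ and patching this by observing that the joint rate $J=\sum_i f_i$ is $+\infty$ there; the paper simply asserts continuity of $h$ and applies Fact~\ref{fact:contraction} directly, so your version closes a small gap in the paper's argument rather than taking a different path.
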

\begin{proof}
    We have 
    \[Y_n\sim h(Z_n)\triangleq\pa{\frac{Z_{1,n}}{\sum_{i\in [d]} Z_{j,n}},\dots,\frac{Z_{d,n}}{\sum_{i\in [d]} Z_{j,n}}},\]
    where the $Z_{j,n}\sim \text{Gamma}(k_{j,n},1)$ are independent gamma random variables. 
    From Lemma~\ref{lem:LDP_gamma}, each $(Z_{i,n}/n)$ satisfies point $(i)$ of an LDP with the good rate function
    \[f_i(t)=\left\{
    \begin{array}{ll}
\infty & \mbox{if } t< 0~\text{or}~t=0<x_i\\
        0 & \mbox{if } t=0=x_i\\
        t-x_i+x_i\log\pa{{x_i}/{t}} & \mbox{if } t>0,\\
    \end{array}
\right.\]
and point $(ii)$ whenever ${\log(\min\pa{k_{i,n},1})}/{n}\to 0.$ 
From Fact~\ref{fact:joint}, we have that $(Z_n/n)=((Z_{i,n}/n)_{i\in [d]})$ satisfies point $(i)$ of an LDP in $\R^d$ with good rate $\sum_{j\in [d]} f_j$, and point $(ii)$ whenever ${\log(\min\pa{k_{i,n},1})}/{n}\to 0$ $\forall~i\in[d]$. Since the map $h$ is continuous, we get from Fact~\ref{fact:contraction} that $(Y_n)$ satisfies point $(i)$ of an LDP with good rate function
\[ I(y)=\inf \sset{\sum_{i\in [d]} f_i(z_i),~\frac{z_i}{\sum_{j\in [d]}z_j}=y_i~\forall i\in [d]},\]
and point $(ii)$ whenever ${\log(\min\pa{k_{i,n},1})}/{n}\to 0~~~\forall~i\in[d]$.
From Proposition~\ref{prop:Kl-gamma}, we get \[I(y)=\left\{
    \begin{array}{ll}
        \KL{x}{y} & \mbox{if } y \in  \cM_1\pa{[d]}\\
        \infty & \mbox{otherwise,}
    \end{array}
\right.\]
which finishes the proof.
\end{proof}

\begin{corollary}[CGF limit for the Dirichlet distribution]\label{cor:varadhan_dir}
Let $(k_n) = ((k_{i,n})_{i\in [d]})\in \pa{{\R_+^*}^d}^\N$ such that
\[\frac{k_n}{n}\to x\in \cM_1([d])~\text{ and }~\frac{\log(\min\pa{k_{i,n}, 1})}{n}\to 0~~\forall~i\in [d].\]
Let $Y_n\sim\text{Dir}\pa{k_{1,n},\dots,k_{d,n}}$. Then, for all $\lambda\in \R^d$, \[\lim_{n\to\infty}\frac{1}{n}\log\EE{e^{n\sca{\lambda,Y_n}}} = \sup_{y\in \cM_1\pa{[d]}}\pa{\sca{\lambda,y} - \KL{x}{y}}.\]
\end{corollary}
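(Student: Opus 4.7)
The plan is to recognize this as an immediate application of Varadhan's integral lemma (Fact~\ref{fact:varadhan}) to the LDP established in Theorem~\ref{thm:ldp_dir}. Specifically, under the stated hypotheses $k_n/n \to x \in \cM_1([d])$ and $\log(\min(k_{i,n},1))/n \to 0$ for every $i \in [d]$, Theorem~\ref{thm:ldp_dir} guarantees that $(Y_n)$ satisfies a \emph{full} LDP (both upper and lower bound) with good rate function $I(y) = \KL{x}{y}$ for $y \in \cM_1([d])$ and $I(y) = \infty$ otherwise.

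To apply Varadhan's lemma, I would set $\phi(y) = \sca{\lambda, y}$, which is continuous on $\R^d$. I then need to verify one of the two integrability conditions. The key observation is that $Y_n$ is almost surely supported in the simplex $\cM_1([d])$, which is compact. Consequently, $\phi(Y_n) = \sca{\lambda, Y_n}$ is uniformly bounded by $\|\lambda\|_\infty$, so for any $\gamma > 1$,
\[
\limsup_n \frac{1}{n}\log \EE{e^{n\gamma \phi(Y_n)}} \;\leq\; \gamma \|\lambda\|_\infty \;<\; \infty,
\]
which verifies the moment condition. (Equivalently, the tail condition is trivial because $\PP{\phi(Y_n) \geq M} = 0$ once $M > \|\lambda\|_\infty$.)

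Varadhan's lemma then yields
\[
\lim_{n\to\infty}\frac{1}{n}\log\EE{e^{n\sca{\lambda,Y_n}}} \;=\; \sup_{y\in \R^d}\pa{\sca{\lambda,y} - I(y)},
\]
and since $I \equiv \infty$ outside $\cM_1([d])$, the supremum on the right reduces to $\sup_{y\in \cM_1([d])}\pa{\sca{\lambda,y} - \KL{x}{y}}$, yielding the claim. There is essentially no obstacle here — the only thing to check carefully is that the hypotheses of Theorem~\ref{thm:ldp_dir} give the \emph{lower} bound of the LDP (needed so that Varadhan's lemma produces a true limit rather than only a limsup), which is precisely why the extra assumption $\log(\min(k_{i,n},1))/n \to 0$ is imposed in the statement.
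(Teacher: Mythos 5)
Your argument is correct and follows exactly the paper's proof: apply Varadhan's integral lemma (Fact~\ref{fact:varadhan}) with $\phi = \sca{\lambda,\cdot}$ to the full LDP of Theorem~\ref{thm:ldp_dir}, verifying the moment condition via the uniform boundedness of $\sca{\lambda,Y_n}$ on the simplex. Nothing to add.
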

\begin{proof}
    Since $\sca{\lambda,Y_n}\leq \max_{i\in [d]}\lambda_i$, we can use Fact~\ref{fact:varadhan} with
    $\phi = \sca{\lambda,\cdot}$, considering the LDP obtained from Theorem~\ref{thm:ldp_dir}.
\end{proof}

\section{LDP for DPs}
\label{app:ldp}

\subsection{CGF limit for DPs}

\begin{fact}[Continuity of convex functions \cite{rockafellar1974conjugate}]\label{fact:contconv}
Let $U$ be a Banach space. Then, if $c:U\to\R$ is convex and bounded on a neighborhood of some point of $U$, then it is continuous on $D_{c}^o$, where $D_c\triangleq \sset{x\in U,~c(x)<\infty}$.
\end{fact}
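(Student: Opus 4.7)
The plan is to first upgrade the one-sided boundedness assumption to a full two-sided bound at the specified point, deduce local Lipschitz continuity there, and then propagate local boundedness (and hence continuity) throughout $D_c^o$ using convexity together with the openness of $D_c^o$.

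First, suppose (WLOG after shrinking $r$) that $c$ is bounded above by some $M$ on a ball $B(x_0, r) \subset U$. I would obtain a lower bound on the same ball from convexity: for $\|h\| < r$, the identity $x_0 = \tfrac12 (x_0 + h) + \tfrac12 (x_0 - h)$ gives $c(x_0 + h) \geq 2 c(x_0) - c(x_0 - h) \geq 2 c(x_0) - M$, so $|c|$ is bounded on $B(x_0, r)$ by some constant $M'$.

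Next, I would derive local Lipschitz continuity at $x_0$. For $y = x_0 + h$ with $\|h\| < r$, set $s = \|h\|/r \in [0,1)$ and write $y = (1-s) x_0 + s (x_0 + h/s)$, where the last point lies on the sphere of radius $r$ (and, after a slight shrinkage of $s$ if needed, inside the ball). Convexity yields $c(y) - c(x_0) \leq s (M' - c(x_0))$, and applying the same bound at $x_0 - h$ together with $c(x_0) \leq \tfrac{1}{1+s} c(y) + \tfrac{s}{1+s} c(x_0 - h/s)$ (expressing $x_0$ as a convex combination involving $y$) yields a matching lower bound. This gives $|c(y) - c(x_0)| \leq C \|y - x_0\|$ for $\|y - x_0\| \leq r/2$ and a constant $C$ depending on $M'$ and $r$, hence continuity at $x_0$.

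Finally, I would propagate local boundedness to an arbitrary $x \in D_c^o$. Since $x \in D_c^o$ and $U$ is a Banach space, there exists $\lambda > 1$ such that $w \triangleq x_0 + \lambda(x - x_0) \in D_c$, i.e., $c(w) < \infty$. For any $z$ in a small enough ball around $x$, one can write $z = (1 - \tfrac{1}{\lambda}) y + \tfrac{1}{\lambda} w$ with $y \in B(x_0, r)$ (the required $y$ is a continuous affine function of $z$, so its image of a small ball around $x$ lies in $B(x_0, r)$). Convexity then gives $c(z) \leq (1 - \tfrac{1}{\lambda}) M + \tfrac{1}{\lambda} c(w) < \infty$, so $c$ is bounded above on a neighborhood of $x$. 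Applying the first two steps at $x$ in place of $x_0$ gives continuity at $x$.

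The one subtle point is the third step: verifying that for $z$ in a sufficiently small neighborhood of $x$, the algebraic solution $y = (z - \tfrac{1}{\lambda} w)/(1 - \tfrac{1}{\lambda})$ indeed lies in $B(x_0, r)$. This is a purely geometric continuity statement in the Banach norm and is where one uses that $U$ is a topological vector space in a nontrivial way. Everything else is the classical convexity calculation of Rockafellar.
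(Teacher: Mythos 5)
The paper does not prove this statement: it is stated as a \emph{Fact} with a citation to Rockafellar and no accompanying argument, so there is no ``paper proof'' to compare against. That said, your proposal is the standard textbook argument for this result and it is correct. The three-step structure (upgrade one-sided boundedness to two-sided boundedness by reflecting through $x_0$; derive a local Lipschitz estimate at $x_0$ by writing nearby points as convex combinations of $x_0$ with points on the sphere of radius $r$; propagate boundedness above from $x_0$ to any $x\in D_c^o$ by pulling back a small ball around $x$ through the affine map $z\mapsto(\lambda z - w)/(\lambda-1)$, using $x\in D_c^o$ to pick $w$ strictly beyond $x$ in $D_c$) is exactly the argument one finds in, e.g., Ekeland--Temam or Rockafellar's conjugate duality notes. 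The algebra in the two convexity inequalities ($c(y)-c(x_0)\le s(M'-c(x_0))$ and the matching lower bound from $x_0=\frac{1}{1+s}y+\frac{s}{1+s}(x_0-h/s)$) checks out, and the affine pullback in the final step does map a small enough ball around $x$ into $B(x_0,r)$ since it sends $x\mapsto x_0$ continuously. One minor remark: the Banach structure is only genuinely used in the Lipschitz estimate via the norm; the propagation step needs only the topological vector space structure, as you note. This matches the scope of the stated Fact, so nothing is missing.
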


\begin{fact}[Characterization of the KL divergence \cite{georgii2011gibbs}]
\label{fact:chara_kl}
Let $(A_{1,n},\dots,A_{k_n,n})$ be a sequence of measurable partitions of $\Omega$
such that the corresponding $\sigma$-algebras, increase to $\cB(\Omega)$. Then, for all $\mu,\nu\in\cM_1\pa{\Omega}$, $\KL{\mu}{\nu}$ is equal to
\[\sup_n \KL{\pa{\mu(A_{i,n})}_{i\in [k_n]}}{\pa{\nu(A_{i,n})}_{i\in [k_n]}}= \lim_n \KL{\pa{\mu(A_{i,n})}_{i\in [k_n]}}{\pa{\nu(A_{i,n})}_{i\in [k_n]}}.\]
\end{fact}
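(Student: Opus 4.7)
The plan is to interpret the discretized KL as the expectation of a conditional Radon--Nikodym derivative, and then combine the data-processing inequality with Doob's martingale convergence theorem. Write $\mathcal{F}_n \triangleq \sigma\pa{A_{1,n},\dots,A_{k_n,n}}$, so that the hypothesis becomes $\mathcal{F}_n \uparrow \cB(\Omega)$, and abbreviate $\mu_n \triangleq \pa{\mu(A_{i,n})}_i$ and $\nu_n \triangleq \pa{\nu(A_{i,n})}_i$.

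First, I would apply the data-processing inequality --- equivalently, conditional Jensen applied to the convex map $x \mapsto x\log x$ --- along the chain $\mathcal{F}_n \subseteq \mathcal{F}_{n+1} \subseteq \cB(\Omega)$. This immediately shows that $n \mapsto \KL{\mu_n}{\nu_n}$ is non-decreasing and uniformly bounded above by $\KL{\mu}{\nu}$, yielding $\sup_n = \lim_n \leq \KL{\mu}{\nu}$ in one stroke.

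For the matching lower bound in the absolutely continuous case $\mu \ll \nu$, I would introduce $g \triangleq d\mu/d\nu \in L^1(\nu)$ together with its $\mathcal{F}_n$-discretization
\[g_n \triangleq \sum_{i:\,\nu(A_{i,n})>0} \frac{\mu(A_{i,n})}{\nu(A_{i,n})}\II{\cdot\in A_{i,n}},\]
which is the $\nu$-conditional expectation of $g$ on $\mathcal{F}_n$. A direct atom-wise computation gives $\KL{\mu_n}{\nu_n} = \EEs{\nu}{g_n \log g_n}$. Since $g \in L^1(\nu)$ and $\mathcal{F}_n \uparrow \cB(\Omega)$, Doob's martingale convergence theorem delivers $g_n \to g$ $\nu$-almost surely. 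Because $x\log x \geq -1/e$, Fatou applied to $g_n \log g_n + 1/e$ gives $\liminf_n \EEs{\nu}{g_n \log g_n} \geq \EEs{\nu}{g\log g} = \KL{\mu}{\nu}$, which combined with the previous paragraph forces equality, uniformly for finite and infinite $\KL{\mu}{\nu}$.

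Finally, for the singular case $\mu \not\ll \nu$, the Lebesgue decomposition produces $B \in \cB(\Omega)$ with $\nu(B) = 0$ and $\mu(B) > 0$. Since $\bigcup_n \mathcal{F}_n$ is an algebra generating $\cB(\Omega)$, a standard approximation argument provides $B_n \in \mathcal{F}_n$ with $(\mu+\nu)(B_n \triangle B) \to 0$, so $\nu(B_n) \to 0$ while $\mu(B_n) \to \mu(B) > 0$. Coarsening the partition to the two-set partition $\sset{B_n, B_n^c}$ and reapplying the data-processing step gives $\KL{\mu_n}{\nu_n} \geq \kl{\mu(B_n)}{\nu(B_n)} \to +\infty = \KL{\mu}{\nu}$. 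The main obstacle I expect lies precisely in this last step: realizing the $\nu$-null but $\mu$-positive mass as an atom of some $\mathcal{F}_n$-approximation of $B$ crucially relies on the \emph{increasing} structure of the partitions, so that $\bigcup_n \mathcal{F}_n$ is a genuine algebra on which the monotone-class / Carathéodory extension controls $(\mu+\nu)(B_n \triangle B)$; the absolutely continuous case, by contrast, is dispatched cleanly by martingale convergence and Fatou.
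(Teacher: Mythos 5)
The paper does not prove this statement: it is stated as a Fact and cited directly to \cite{georgii2011gibbs}, so there is no in-paper proof to compare against. Your argument is correct and is essentially the canonical proof of this classical identity. The upper bound via data-processing (conditional Jensen for $x\mapsto x\log x$) along the filtration $\mathcal{F}_n\subseteq\mathcal{F}_{n+1}\subseteq\cB(\Omega)$ gives both monotonicity and the bound by $\KL{\mu}{\nu}$ simultaneously, which is the right economy. In the absolutely continuous case, your identity $\KL{\mu_n}{\nu_n}=\EEs{\nu}{g_n\log g_n}$ with $g_n=\EEccs{\nu}{g}{\mathcal{F}_n}$ is exactly the correct reformulation, and L\'evy's upward theorem ($\mathcal{F}_n\uparrow\cB(\Omega)$ gives $g_n\to g$ $\nu$-a.s.) combined with Fatou on the nonnegative sequence $g_n\log g_n+1/e$ closes the gap in both the finite and infinite cases. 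In the singular case, Halmos's approximation theorem applied to the algebra $\bigcup_n\mathcal{F}_n$ does exactly what you need; one minor point worth recording explicitly is that $\kl{\mu(B_n)}{\nu(B_n)}\to\infty$ because the first term $\mu(B_n)\log(\mu(B_n)/\nu(B_n))$ blows up while the complementary term $(1-\mu(B_n))\log\pa{(1-\mu(B_n))/(1-\nu(B_n))}$ remains bounded (even when $\mu(B)=1$, since $x\log x\to 0$ as $x\to 0^+$). No gaps; this is a complete and faithful proof of the cited fact.
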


\begin{fact}[LDP general upper bound \cite{dembo2009large}]\label{fact:LDP_general_upper_bound}
    Let $\cX$ be a Hausdorff (real) topological vector space. Let $(\mu_n)\in \cM_1(\cX)^\N$. Let 
    \[c(\lambda)\triangleq\limsup_n\frac{1}{n}\log\int_{\cX}e^{n\lambda(x)}d\mu_n(x),\quad \lambda\in \cX^*.\]
    We also define the Fenchel-Legendre transform as
\[c^*(x)\triangleq \sup_{\lambda\in \cX^*}\pa{\sca{\lambda,x} - c(x)},\quad x\in \cX.\]
Then, for any compact set $K\subset \cX$, we have
\[\limsup_{n}\frac{1}{n}\log\mu_n(K)\leq -\inf_{x\in K}c^*(x).\]
\end{fact}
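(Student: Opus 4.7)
The plan is to apply the standard exponential Chebyshev argument pointwise on $K$ and then use compactness to upgrade from pointwise to uniform control. First, for each $x\in K$ and each $\delta>0$, I would use the variational definition of $c^*$ to pick a continuous linear functional $\lambda_x\in \cX^*$ approximating the supremum: when $c^*(x)<\infty$, choose $\lambda_x$ so that $\sca{\lambda_x,x} - c(\lambda_x) > c^*(x) - \delta$; when $c^*(x)=+\infty$, choose $\lambda_x$ so that $\sca{\lambda_x,x} - c(\lambda_x) > 1/\delta$. Denote the resulting lower bound by $M_x\triangleq \min\pa{c^*(x),1/\delta}$.

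Next, I would define the open half-space $U_x\triangleq \sset{y\in \cX : \sca{\lambda_x,y} > \sca{\lambda_x,x} - \delta}$, which is open by continuity of $\lambda_x$ and contains $x$. Markov's inequality applied to $e^{n\lambda_x}$ gives
\[\mu_n(U_x) \leq e^{-n\pa{\sca{\lambda_x,x} - \delta}} \int_{\cX} e^{n\lambda_x(y)} d\mu_n(y),\]
and taking $\limsup_n \tfrac{1}{n}\log$ together with the definition of $c$ yields
\[\limsup_n \frac{1}{n}\log \mu_n(U_x) \leq -\sca{\lambda_x,x} + \delta + c(\lambda_x) \leq -M_x + 2\delta.\]

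Now I would invoke compactness of $K$ to extract from the open cover $\sset{U_x}_{x\in K}$ a finite subcover $U_{x_1},\dots,U_{x_N}$, and combine finite subadditivity $\mu_n(K) \leq \sum_{i=1}^N \mu_n(U_{x_i})$ with the elementary identity $\limsup_n \tfrac{1}{n} \log \sum_{i=1}^N a_{i,n} = \max_{i\in [N]} \limsup_n \tfrac{1}{n} \log a_{i,n}$ for finite sums of non-negative terms, to obtain
\[\limsup_n \frac{1}{n} \log \mu_n(K) \leq \max_{i\in [N]} \pa{-M_{x_i} + 2\delta} \leq -\min\pa{\inf_{x\in K} c^*(x),\,1/\delta} + 2\delta.\]
Letting $\delta\to 0$ then yields the desired bound.

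The only real obstacle is the bookkeeping when $c^*$ takes the value $+\infty$ somewhere on $K$: to avoid $-\infty$ appearing inside an intermediate estimate I truncate by $1/\delta$ above and only push the truncation to $+\infty$ at the very end. Beyond that, the proof is routine: the Hausdorff assumption plays no role beyond ensuring the ambient topology is reasonable, and the compactness of $K$ is exactly what reduces an uncountable cover to a manageable finite one whose union still controls $\mu_n(K)$.
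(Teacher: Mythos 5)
Your argument is correct and is essentially the canonical proof of the LDP upper bound on compacts (Dembo–Zeitouni, Theorem 4.5.3(b)): for each point pick a near-optimal exponent $\lambda_x$, Chernoff-bound the resulting open half-space, pass to a finite subcover by compactness, and use that $\limsup \tfrac1n\log$ of a finite sum is the max of the $\limsup \tfrac1n\log$'s. The truncation by $1/\delta$ to handle $c^*(x)=+\infty$ is exactly the standard device. The paper itself does not reprove this statement—it is cited as a Fact from the reference—so there is no alternative in-paper route to compare against; your proof matches the source it points to. One cosmetic point worth noting: the paper's display $c^*(x)\triangleq \sup_{\lambda}\pa{\sca{\lambda,x}-c(x)}$ contains a typo ($c(x)$ should be $c(\lambda)$), and you correctly read it the intended way. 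Also, as you observe, the Hausdorff assumption is not used in the argument itself; it is there because in the cited framework the rate function is required to be defined on a space where compact sets are well behaved and the weak topology on $\cM_1$ is Hausdorff, but the Chernoff-plus-compactness computation goes through without it.
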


\begin{proposition}\label{prop:KL=KL_k}
Let $(A_1,\dots,A_k)$ be a measurable partition of $\Omega$.   
Then, for all $\phi:\Omega\to\R$ bounded and measurable with respect to $\sigma\pa{A_1,\dots,A_k}$,
and all $\mu\in \cM_1(\Omega)$, we have
    \begin{align*}\sup_{\nu\in \cM_1(\Omega)}\pa{\int \phi d\nu - \KL{\mu}{\nu}} = \sup_{\nu\in \cM_1(\Omega)}\pa{\int \phi d\nu - \KL{\pa{\mu(A_{i})}_{i\in [k]}}{\pa{\nu(A_{i})}_{i\in [k]}}}.\end{align*}
\end{proposition}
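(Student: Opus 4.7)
The plan is to establish the equality by proving the two inequalities separately, after first recording a key simplification: since $\phi$ is $\sigma\pa{A_1,\dots,A_k}$-measurable, it is piecewise constant on the partition, say $\phi\equiv c_i$ on $A_i$, so that $\int \phi\, d\nu = \sum_{i\in [k]} c_i \nu(A_i)$ depends on $\nu$ only through the marginal vector $\pa{\nu(A_i)}_{i\in [k]}$. The $\phi$-integrals on both sides therefore behave identically, and the whole difficulty reduces to comparing the two KL terms.

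For the direction $\leq$, the key input is the monotonicity of the KL divergence under coarsening of the $\sigma$-algebra: for every $\nu \in \cM_1(\Omega)$,
\[\KL{\pa{\mu(A_i)}_{i\in [k]}}{\pa{\nu(A_i)}_{i\in [k]}} \leq \KL{\mu}{\nu}.\]
This is a one-step data-processing inequality, and is also implicit in Fact~\ref{fact:chara_kl}. Pointwise in $\nu$, it yields $\int \phi\, d\nu - \KL{\mu}{\nu} \leq \int \phi\, d\nu - \KL{\pa{\mu(A_i)}_{i\in [k]}}{\pa{\nu(A_i)}_{i\in [k]}}$, and taking the supremum over $\nu$ gives $\mathrm{LHS}\leq \mathrm{RHS}$.

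For the reverse inequality, given any $\nu\in \cM_1(\Omega)$, I would construct a ``realignment'' $\tilde\nu\in \cM_1(\Omega)$ that keeps the same mass on each $A_i$ as $\nu$ but replaces the conditional distribution of $\nu$ inside $A_i$ by that of $\mu$:
\[\tilde\nu(B) \triangleq \sum_{i:\,\mu(A_i)>0} \nu(A_i)\, \frac{\mu(B\cap A_i)}{\mu(A_i)} + \sum_{i:\,\mu(A_i)=0} \nu(A_i)\, \rho_i(B),\]
where each $\rho_i$ is an arbitrarily chosen probability measure supported on $A_i$. Then $\tilde\nu$ is a probability measure with $\tilde\nu(A_j) = \nu(A_j)$, so $\int \phi\, d\tilde\nu = \int \phi\, d\nu$. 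Moreover, by construction the density $d\mu/d\tilde\nu$ is piecewise constant on the partition, equal to $\mu(A_i)/\nu(A_i)$ on each $A_i$ with $\mu(A_i)>0$ and $\nu(A_i)>0$; integrating its logarithm against $\mu$ collapses the full KL onto the discrete one, $\KL{\mu}{\tilde\nu} = \KL{\pa{\mu(A_i)}_{i\in [k]}}{\pa{\nu(A_i)}_{i\in [k]}}$. Plugging $\tilde\nu$ into the bracket on the LHS and then taking the supremum over $\nu$ yields $\mathrm{RHS}\leq \mathrm{LHS}$.

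The main technical point is the careful treatment of the degenerate cases. When $\mu(A_i)>0$ and $\nu(A_i)=0$, both KL expressions should equal $+\infty$, which is consistent here because $\tilde\nu(A_i)=0$ as well, forcing $\KL{\mu}{\tilde\nu}=\infty$; when $\mu(A_i)=0$, the corresponding term vanishes in both expressions under the convention $0\log 0 = 0$, irrespective of the auxiliary choice of $\rho_i$. These verifications are routine but must be made explicit to validate the key identity $\KL{\mu}{\tilde\nu} = \KL{\pa{\mu(A_i)}_{i\in [k]}}{\pa{\nu(A_i)}_{i\in [k]}}$.
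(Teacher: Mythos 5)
Your proof is correct and takes essentially the same route as the paper: the $\le$ direction is the data-processing (Jensen/convexity) inequality $\KL{\pa{\mu(A_i)}_i}{\pa{\nu(A_i)}_i}\le\KL{\mu}{\nu}$, and the $\ge$ direction is the same realignment construction (your $\tilde\nu$ is the paper's $\delta$, up to the immaterial choice of conditional law on $\mu$-null cells). The only cosmetic differences are the order of the two inequalities and that the paper spells out the Jensen step for $x\mapsto x\log x$ rather than citing data processing.
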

\begin{proof}
Let $\mu\in \cM_1(\Omega)$. If $\nu\in \cM_1(\Omega)$ is such that $\mu(A_i)>0=\nu(A_i)$ for some $A_i$, then 
\[\KL{\pa{\mu(A_{i})}_{i\in [k]}}{\pa{\nu(A_{i})}_{i\in [k]}}
    =
    \KL{\mu}{\nu}=\infty,\]
so such $\nu$ can be excluded in both suprema. Let's thus consider $\nu$ such that for all $i\in[k]$, $\mu(A_i)>0\Rightarrow \nu(A_i)>0$. We first consider the $\geq$ in the equality of Proposition~\ref{prop:KL=KL_k}.
We define $\delta\in \cM_1(\Omega)$ as 
\[\delta\triangleq\sum_{i\in [k],~\mu(A_i)>0} \frac{\nu(A_i)}{\mu(A_i)}\mu(\cdot\cap A_i) + \sum_{i\in [k],~\mu(A_i)=0}\nu(\cdot\cap A_i).\]
Then, $\mu$ is absolutely continuous with respect to $\delta$ and 
\begin{align*}\KL{\mu}{\delta}&=\int \log\pa{\frac{d\mu}{d\delta}}d\mu\\&= \sum_{i\in [k],~\mu(A_i)>0} \int_{A_i}\log\pa{\frac{d\mu}{d\delta}}d\mu\\& = \sum_{i\in [k],~\mu(A_i)>0} \mu(A_i)\log\pa{\frac{\mu(A_i)}{\nu(A_i)}}\\&=
\KL{\pa{\mu(A_{i})}_{i\in [k]}}{\pa{\nu(A_{i})}_{i\in [k]}}.
\end{align*}
In addition, since for all $i\in [k]$, $\nu(A_i)=\delta(A_i)$, we have \[\int \phi d\nu = \int \phi d\delta.\]
Since $\nu$ was arbitrary, we deduce that
\[\sup_{\nu\in \cM_1(\Omega)}\pa{\int \phi d\nu - \KL{\mu}{\nu}} \geq  \sup_{\nu\in \cM_1(\Omega)}\pa{\int \phi d\nu - \KL{\pa{\mu(A_{i})}_{i\in [k]}}{\pa{\nu(A_{i})}_{i\in [k]}}}.\]
For the reversed inequality, it is sufficient to prove that 
\[\KL{\pa{\mu(A_{i})}_{i\in [k]}}{\pa{\nu(A_{i})}_{i\in [k]}}
    \leq \KL{\mu}{\nu}.\]
    We can assume that $\mu \ll \nu$, as otherwise $\KL{\mu}{\nu}=\infty$ and the inequality is trivial.
    By convexity of $f:x\mapsto x\log(x)$ on $[0,\infty)$, we have
    \begin{align*}\KL{\pa{\mu(A_{i})}_{i\in [k]}}{\pa{\nu(A_{i})}_{i\in [k]}}&=
    \sum_{i\in [k],~\nu(A_i)>0} \nu(A_i)f\pa{\frac{1}{\nu(A_i)}\int_{A_i}\frac{d\mu}{d\nu}d\nu}
    \\&\leq 
    \sum_{i\in [k],~\nu(A_i)>0} \nu(A_i){\frac{1}{\nu(A_i)}\int_{A_i}f\pa{\frac{d\mu}{d\nu}}d\nu}
    \\&= 
\sum_{i\in [k]} \int_{A_i}f\pa{\frac{d\mu}{d\nu}}d\nu
     = \KL{\mu}{\nu}.\end{align*}
\end{proof}

\begin{lemma}[CGF limit for DPs, the case of simple functions]\label{lem:mu-conti}
Let $(\mu_n)$ be a sequence of measures in $\cM(\Omega)$, with support $\Omega$, such that \[\frac{\mu_n}{n}\to\mu\in \cM_1\pa{\Omega},~~\text{weakly.}\] 
Let $(A_1,\dots,A_k)$ be a measurable partition of $\Omega$. Suppose that
each $A_i$ is a $\mu$-continuity set with non-empty interior and that \({\log\pa{\min\pa{\mu_n(A_i),1}}}=o\pa{n}\). Then, for all $\phi$  bounded and measurable with respect to $\sigma\pa{A_1,\dots,A_k}$,
\[\lim_{n\to\infty} 
\frac{1}{n}\log\int_{\cX}e^{n\phi(x)}d\mu_n(x)
 = \sup_{\nu\in \cM_1(\Omega)}\pa{\int \phi d\nu - \KL{\mu}{\nu}}.\]
\end{lemma}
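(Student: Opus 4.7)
The plan is to reduce the DP expectation to a finite-dimensional Dirichlet moment generating function via the defining property of the DP, apply Corollary~\ref{cor:varadhan_dir}, and then convert the resulting finite-dimensional rate function into the infinite-dimensional KL form using Proposition~\ref{prop:KL=KL_k}.

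First, since $\phi$ is bounded and $\sigma(A_1,\dots,A_k)$-measurable, it is a simple function of the form $\phi = \sum_{i\in[k]} c_i \II{A_i}$ for some $c = (c_i)_{i\in[k]} \in \R^k$. For $X\sim\DP(\mu_n)$, the defining property of the DP gives
\[
Y_n \triangleq (X(A_1),\dots,X(A_k)) \sim \text{Dir}\pa{\mu_n(A_1),\dots,\mu_n(A_k)},
\]
so that $\int \phi\, dX = \sca{c, Y_n}$ and the left-hand side of the lemma rewrites as $\frac{1}{n}\log \EE{e^{n\sca{c, Y_n}}}$.

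Next, I would verify the hypotheses of Corollary~\ref{cor:varadhan_dir} with $k_{i,n}=\mu_n(A_i)$. Each $A_i$ has non-empty interior and $\mu_n$ has full support $\Omega$, so $k_{i,n}>0$ and the Dirichlet parameters are well defined. Since each $A_i$ is a $\mu$-continuity set and $\mu_n/n\to\mu$ weakly, the portmanteau theorem yields $k_{i,n}/n\to\mu(A_i)$; the partition property gives $\sum_{i\in[k]}\mu(A_i) = \mu(\Omega) = 1$, hence $(\mu(A_i))_{i\in[k]} \in \cM_1([k])$. The assumption $\log(\min(\mu_n(A_i),1)) = o(n)$ is exactly the decay hypothesis required by the corollary. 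Applying it with $\lambda = c$ yields
\[
\lim_{n\to\infty} \frac{1}{n}\log \EE{e^{n\sca{c, Y_n}}} = \sup_{y\in \cM_1([k])}\pa{\sca{c, y} - \KL{\pa{\mu(A_i)}_{i\in[k]}}{y}}.
\]

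Finally, I would convert this finite-dimensional supremum into the desired infinite-dimensional one. For any $\nu\in\cM_1(\Omega)$, measurability of $\phi$ with respect to $\sigma(A_1,\dots,A_k)$ gives $\int \phi\, d\nu = \sca{c, (\nu(A_i))_{i\in[k]}}$; moreover, every $y\in\cM_1([k])$ is realized as $(\nu(A_i))_{i\in[k]}$ for some $\nu\in\cM_1(\Omega)$, e.g.\ as a convex combination of Dirac masses placed one inside each $A_i$. Hence the previous supremum equals $\sup_{\nu\in\cM_1(\Omega)}\pa{\int\phi\, d\nu - \KL{\pa{\mu(A_i)}_{i\in[k]}}{\pa{\nu(A_i)}_{i\in[k]}}}$, which by Proposition~\ref{prop:KL=KL_k} coincides with $\sup_{\nu\in\cM_1(\Omega)}\pa{\int\phi\, d\nu - \KL{\mu}{\nu}}$. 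No serious obstacle is anticipated: the technical work has been handled upstream by Corollary~\ref{cor:varadhan_dir} (Varadhan's lemma applied to the Dirichlet LDP) and by Proposition~\ref{prop:KL=KL_k} (the collapse of the KL onto its $\sigma(A_1,\dots,A_k)$-coarsening at the supremum). The only mild subtlety is ensuring the Dirichlet parameters $\mu_n(A_i)$ are strictly positive, which is exactly why both the support assumption on $\mu_n$ and the non-empty interior of each $A_i$ appear in the hypotheses.
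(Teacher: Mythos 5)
Your proof is correct and follows essentially the same route as the paper: write $\phi$ as a simple function over the partition, pass to the Dirichlet marginal $(X(A_i))_i$ via the defining property of the DP, invoke Corollary~\ref{cor:varadhan_dir} to obtain the finite-dimensional CGF limit, and then convert the supremum over $\cM_1([k])$ to a supremum over $\cM_1(\Omega)$ via Proposition~\ref{prop:KL=KL_k}. You spell out two steps the paper elides (the verification of the hypotheses of the corollary via the portmanteau theorem and full support, and the surjectivity of $\nu\mapsto(\nu(A_i))_i$ from $\cM_1(\Omega)$ onto $\cM_1([k])$), which is a minor but welcome addition.
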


\begin{proof} Let $(A_1,\dots,A_k)$ be a measurable partition of $\Omega$ such that
each $A_i$ is a $\mu$-continuity set with non-empty interior. Let $\phi$ be bounded and measurable with respect to the $\sigma$-algebra
generated by this partition. Then we can write $\phi=\sum_{i\in [k]}\lambda_i\II{\cdot\in A_i},$
where $\lambda=(\lambda_i)_{i\in [k]}\in \R^k$. By the assumption that $\mu_n$ has support $\Omega$, and that each $A_i$ is a $\mu$-continuity set with non-empty interior, we have that the sequence $\pa{\pa{\mu_n(A_i)}_{i\in [k]}}\in \pa{\R^k}^\N$
satisfies the assumptions of Corollary~\ref{cor:varadhan_dir}, so that
\begin{align*}\lim_{n\to\infty} \frac{1}{n}\log\int_{\cX}e^{n\phi(x)}d\mu_n(x)
  &= \sup_{\nu\in \cM_1(\Omega)}\pa{\int \phi d\nu - \KL{\pa{\mu(A_{i})}_{i\in [k]}}{\pa{\nu(A_{i})}_{i\in [k]}}}
\\&=
\sup_{\nu\in \cM_1(\Omega)}\pa{\int \phi d\nu - \KL{\mu}{\nu}} 
,\end{align*}
where the last equality is from Proposition~\ref{prop:KL=KL_k}.
\end{proof}

\begin{lemma}[CGF limit for DPs]
\label{lem:lynch_DP}
Let $(\mu_n)$ be a sequence of measures in $\cM(\Omega)$, with support $\Omega$, such that \[\frac{\mu_n}{n}\to\mu\in \cM_1\pa{\Omega},~~\text{weakly.}\]
If
\({\log\pa{\min\pa{\mu_n(A),1}}}=o\pa{n}\) for all $\mu$-continuity set $A$ with non-empty interior, 
    then, for all continuous functions $\phi:\Omega\to\R$, the limit
    \[\Lambda(\phi)\triangleq\lim_{n\to\infty} \frac{1}{n}\Lambda_{\mu_n}(n\phi)=\lim_{n\to\infty} 
\frac{1}{n}\log\int_{\cX}e^{n\phi(x)}d\mu_n(x)
 \]
    exists and is finite. $\Lambda : \cC(\Omega)\to \R$ is convex and continuous, and
    \[\Lambda(\phi)=\sup_{\nu\in \cM_1(\Omega)}\pa{\int \phi d\nu - \KL{\mu}{\nu}}.\]
\end{lemma}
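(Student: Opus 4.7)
The plan is to extend Lemma~\ref{lem:mu-conti}, which handles simple functions measurable with respect to a finite partition of $\Omega$ into $\mu$-continuity sets with non-empty interior, to arbitrary $\phi \in \cC(\Omega)$ by uniform approximation. Since $\Omega$ is compact metric, every continuous $\phi$ is bounded and uniformly continuous, so for each $\veps > 0$ I would first construct a finite measurable partition $\Omega = A_1 \sqcup \cdots \sqcup A_k$ such that (a) each $A_i$ has diameter at most $\veps$, (b) each $A_i$ is a $\mu$-continuity set, and (c) each $A_i$ has non-empty interior. Picking $x_i$ in the interior of $A_i$ and setting $\phi_\veps \triangleq \sum_{i=1}^k \phi(x_i)\II{\cdot \in A_i}$ then produces a simple approximant with $\|\phi - \phi_\veps\|_\infty \leq \veps$ to which Lemma~\ref{lem:mu-conti} directly applies.

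Once such an approximation is available, the argument is a standard sandwich. For any probability measure $X$ on $\Omega$, $|\int(\phi - \phi_\veps)\,dX| \leq \veps$, which pre-limit yields
\[
\frac{1}{n}\Lambda_{\mu_n}(n\phi_\veps) - \veps \;\leq\; \frac{1}{n}\Lambda_{\mu_n}(n\phi) \;\leq\; \frac{1}{n}\Lambda_{\mu_n}(n\phi_\veps) + \veps.
\]
Writing $J(\psi) \triangleq \sup_{\nu \in \cM_1(\Omega)}(\int \psi\,d\nu - \KL{\mu}{\nu})$, Lemma~\ref{lem:mu-conti} gives $\lim_n \frac{1}{n}\Lambda_{\mu_n}(n\phi_\veps) = J(\phi_\veps)$, so both the $\liminf$ and $\limsup$ of $\frac{1}{n}\Lambda_{\mu_n}(n\phi)$ lie in $[J(\phi_\veps) - \veps, J(\phi_\veps) + \veps]$. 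Translation invariance of $J$ gives $|J(\phi) - J(\phi_\veps)| \leq \|\phi - \phi_\veps\|_\infty \leq \veps$, so both extremes fall within $2\veps$ of $J(\phi)$. Letting $\veps \to 0$ yields existence of $\Lambda(\phi) = J(\phi)$.

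The remaining properties are then cheap. Finiteness follows from $\inf_\Omega \phi \leq \frac{1}{n}\Lambda_{\mu_n}(n\phi) \leq \sup_\Omega \phi$, both finite by continuity of $\phi$ on the compact $\Omega$, and this bound is preserved in the limit. Convexity of $\Lambda$ is inherited from the convexity of $\psi \mapsto \frac{1}{n}\Lambda_{\mu_n}(n\psi)$ for each $n$ (an application of H\"older's inequality to the moment generating functional), which passes to the pointwise limit. Continuity of $\Lambda$ on the Banach space $\cC(\Omega)$ then follows from Fact~\ref{fact:contconv}, since $\Lambda$ is convex and bounded by $\|\phi\|_\infty$ on any supnorm ball.

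The only delicate point is the measure-theoretic construction of the partition. The plan is to cover $\Omega$ by finitely many open balls $B(x_i, r_i)$ with $r_i < \veps/2$ whose boundaries are $\mu$-null; since at most countably many concentric radii can charge the associated sphere, this is always possible. Setting $A_i \triangleq B(x_i, r_i) \setminus \bigcup_{j < i} \bar B(x_j, r_j)$ defines disjoint open sets whose union differs from $\Omega$ only by a subset of $\bigcup_j \partial B(x_j, r_j)$, which is $\mu$-null and can be reassigned arbitrarily among the $A_i$. Each non-empty $A_i$ then contains an open ball around $x_i$ of some positive radius, and its boundary is contained in $\bigcup_j \partial B(x_j, r_j)$, hence $\mu$-null. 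Discarding any empty $A_i$ yields the desired partition.
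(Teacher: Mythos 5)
Your argument is correct and follows essentially the same route as the paper: approximate $\phi$ uniformly by a simple function adapted to a partition of $\mu$-continuity sets with non-empty interior, apply Lemma~\ref{lem:mu-conti}, and sandwich. The only reorganization is that you read off $\Lambda(\phi) = J(\phi)$ directly from the sandwich together with the $1$-Lipschitz property of $J$ in sup norm (which you attribute to ``translation invariance''; really it is translation invariance by constants combined with monotonicity), whereas the paper first establishes continuity of $\Lambda$ and of $\mathrm{KL}^*$ separately and then concludes $\Lambda = \mathrm{KL}^*$ by density of the adapted simple functions; both are fine, and your version is arguably a little more direct. Two small slips to clean up: your assertion that each non-empty $A_i$ contains an open ball around $x_i$ fails whenever $x_i \in \bar{B}(x_j, r_j)$ for some $j < i$ --- but it is irrelevant, since each non-empty $A_i$ is open and therefore has non-empty interior, which is all the lemma requires; and the diameter bound $\mathrm{diam}(A_i) \le \veps$ only gives $\|\phi - \phi_\veps\|_\infty \le \veps$ after replacing $\veps$ by a suitable $\delta(\veps)$ from uniform continuity, which is harmless but worth saying.
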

\begin{proof}
    Let $\phi\in \cC(\Omega)$ and $\epsilon>0$.
There exists a simple function $g=\sum_{i\in [k]} c_i \II{\cdot\in A_i}$, where $(A_1,\dots,A_k)$ is a measurable partition of $\Omega$, such that $\norm{\phi-g}_\infty<\epsilon$.
    Since $\phi$ is continuous, we can choose the $A_i$ to be $\mu$-continuity sets with non-empty interiors, such that from Lemma~\ref{lem:mu-conti}, \[\Lambda(g)\triangleq \lim_{n\to\infty} \frac{1}{n}\Lambda_{\mu_n}(n g) = \sup_{\nu\in \cM_1(\Omega)}\pa{\int g d\nu - \KL{\mu}{\nu}}.\]
    We see that $-n\epsilon + \Lambda_{\mu_n}(n g)\leq \Lambda_{\mu_n}(n\phi)\leq n\epsilon + \Lambda_{\mu_n}(n g)$, so \[-\epsilon + \Lambda(g)\leq \liminf_{n\to\infty} \frac{1}{n}\Lambda_{\mu_n}(n\phi)\leq \limsup_{n\to\infty} \frac{1}{n}\Lambda_{\mu_n}(n\phi)\leq \epsilon + \Lambda(g),\]
    and so
    \[0\leq \limsup_{n\to\infty} \frac{1}{n}\Lambda_{\mu_n}(n\phi) - \liminf_{n\to\infty} \frac{1}{n}\Lambda_{\mu_n}(n\phi)\leq 2\epsilon.\]
    Since $\epsilon$ is arbitrary, it
follows that \[\Lambda(\phi)\triangleq\lim_{n\to\infty} \frac{1}{n}\Lambda_{\mu_n}(n\phi)\]
    exists and is finite for all $\phi\in \cC(\Omega)$ (such function $\phi$ is thus bounded as $\Omega$ is compact). For all $\phi,\phi'\in \cC(\Omega)$, we have
    \begin{align*}\Lambda(\phi)-\Lambda(\phi') &= \lim_{n\to\infty} \frac{1}{n}\pa{\Lambda_{\mu_n}(n\phi)-\Lambda_{\mu_n}(n\phi')}\\&\leq 
    \lim_{n\to\infty} \frac{1}{n}\pa{\Lambda_{\mu_n}(n\phi' + n\norm{\phi-\phi'}_\infty)-\Lambda_{\mu_n}(n\phi')}
    \\&= \norm{\phi-\phi'}_\infty,
    \end{align*}
    so $\Lambda$ is continuous.
    For $\phi\in L^\infty(\Omega)$, we define \[\text{KL}^*(\phi)\triangleq \sup_{\nu\in \cM_1(\Omega)}\pa{\int \phi d\nu - \KL{\mu}{\nu}}.\]
    we have
    \[\abs{\text{KL}^*(\phi)}\leq \abs{\sup_{\nu\in \cM_1(\Omega)}{\int {\phi} d\nu - \underbrace{\inf_{\nu\in \cM_1(\Omega)} \KL{\mu}{\nu}}_{=0}}}\leq \norm{\phi}_\infty,\]
    so, from Fact~\ref{fact:contconv}, $\text{KL}^*$ is continuous on the interior of its domain, i.e., on $L^\infty(\Omega)$. In particular, it is continuous on $\cC\pa{\Omega}$. By Lemma~\ref{lem:mu-conti}, $\Lambda=\text{KL}^*$ on the set of simple functions of the form $g=\sum_{i\in [k]} c_i \II{\cdot\in A_i}$ such that 
    $(A_1,\dots,A_k)$ is a measurable partition of $\Omega$ and
    each $A_i$ is a $\mu$-continuity sets with non-empty interiors.
    Since such functions are dense in $\cC\pa{\Omega}$ and both $\Lambda, \text{KL}^*$ are continuous on $\cC\pa{\Omega}$, it follows that $\Lambda=\text{KL}^*$ on $\cC\pa{\Omega}$. In particular, $\Lambda$ is convex. 
\end{proof}

\begin{corollary}
    Let $\phi\in \cC(\Omega)$. Assume that $\mu\in \cM_1(\Omega)$ has support $\Omega$ and that $\delta\in \cM(\Omega)$. Then, 
    \[\lim_{n\to\infty} \frac{1}{n}\Lambda_{\delta+n\mu}(n\phi)=\sup_{\nu\in \cM_1(\Omega)}\pa{\int \phi d\nu - \KL{\mu}{\nu}}.\]
\end{corollary}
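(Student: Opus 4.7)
The plan is to deduce the corollary directly from Lemma~\ref{lem:lynch_DP} applied to the sequence $\mu_n \triangleq \delta + n\mu \in \cM(\Omega)$. The entire work then reduces to verifying the three hypotheses of that lemma for this particular choice.

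First I would check weak convergence of $\mu_n/n$ to $\mu$: since $\delta$ is a fixed finite measure, $\delta/n$ vanishes in total variation, so for any $f\in\cC(\Omega)$, $\int f\,d(\mu_n/n) = n^{-1}\int f\,d\delta + \int f\,d\mu \to \int f\,d\mu$. Next, the support of $\mu_n$ equals $\Omega$: since $\mu$ has full support and $\mu_n \geq n\mu$, every non-empty open subset of $\Omega$ receives positive $\mu_n$-mass. Finally, for any $\mu$-continuity set $A\subset\Omega$ with non-empty interior $A^{\mathrm{o}}$, the full-support hypothesis yields $\mu(A) = \mu(A^{\mathrm{o}}) > 0$, so $\mu_n(A) \geq n\mu(A) \to \infty$; in particular $\min(\mu_n(A),1) = 1$ for all $n$ large enough, making $\log\pa{\min(\mu_n(A),1)} = o(n)$ trivially.

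With the three hypotheses verified, Lemma~\ref{lem:lynch_DP} immediately produces the claimed limit $\sup_{\nu\in\cM_1(\Omega)}\pa{\int\phi\,d\nu - \KL{\mu}{\nu}}$. I do not anticipate any substantive obstacle: the corollary is essentially the observation that an additive perturbation of the scale parameter by a fixed finite measure $\delta$ is subdominant with respect to $n$ and therefore does not influence the asymptotic CGF.
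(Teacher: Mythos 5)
Your proof is correct and is exactly the verification the paper leaves implicit: the corollary is stated without proof as an immediate specialization of Lemma~\ref{lem:lynch_DP} to $\mu_n = \delta + n\mu$, and you have correctly checked the three hypotheses (weak convergence of $\mu_n/n$, full support, and the $o(n)$ log condition via $\mu(A) = \mu(A^{\mathrm{o}}) > 0$).
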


\subsection{LDP for DPs: Proof of Theorem~\ref{thm:gen_ldp_DP}}
\begin{proof}[Proof of Theorem~\ref{thm:gen_ldp_DP}]
 We have from Lemma~\ref{lem:lynch_DP} that $\Lambda(\cdot)\triangleq \lim_{n\to\infty} \frac{1}{n}\Lambda_{\mu_n}(n~\cdot)$ is the convex
conjugate of $\KL{\mu}{\cdot}$, which is convex, and lower semicontinuous in the
weak topology. Thus, the two functions 
$\Lambda(\cdot)$ and $\KL{\mu}{\cdot}$ are convex duals of each other (recall that
$\cM_1\pa{\Omega}$ is Polish since $\Omega$ is Polish). From Fact~\ref{fact:LDP_general_upper_bound}, we thus get the large deviations upper bound for compact
subsets of $\cM_1(\Omega)$.  But $\Omega$ was assumed to be compact, hence $\cM_1(\Omega)$ is compact in the
weak topology, so the upper bound holds for all closed sets in $\cM_1(\Omega)$. We
now turn to the proof of the lower bound.  
The weak topology on $\cM_1(\Omega)$ is generated by the sets
\[U_{\phi,x,\eta} = \sset{\nu\in \cM_1(\Omega),~\abs{\int_\Omega \phi d\nu - x }<\eta},\quad\phi\in \cC\pa{\Omega},~x\in \R,~\eta>0.\]
Given such a set and some $\epsilon\in (0,\eta/3),$ there exists a sequence of measurable partitions of $\Omega$, denoted $(A_k)_k=\pa{(A_{1,k},\dots,A_{n_k,k})}_k$, and
a sequence of simple $\sigma(A_k)$-measurable functions $\phi_k = \sum_{i\in [n_k]} c_{i,k}\II{\cdot\in A_{i,k}}$  
such that
\begin{itemize}
    \item $\sigma(A_k)$ increases to $\cB(\Omega)$,
    \item For all $k$ and $i\in [n_k]$, $A_{i,k}$ is a $\mu$-continuity set with non-empty interior,
    \item There is some $k_0$ such that for $k>k_0$, $\norm{\phi-\phi_k}_\infty < \epsilon$.
\end{itemize}
We have
\[\PPs{\DP(\mu_n)}{U_{\phi,x,\eta}}\geq \PPs{X\sim\DP(\mu_n)}{\abs{\int_\Omega \phi_k dX - x }<\eta - \epsilon},\quad \forall k>k_0.\]
From Theorem~\ref{thm:ldp_dir}, if $X_n\sim\DP(\mu_n)$, then the sequence
$
\pa{X_n(A_{1,k}),\dots,X_n(A_{n_k,k})}_{n}
$
satisfies an LDP with rate function $I_k$ given by
\[I_k(y_1,\dots,y_{n_k})=\left\{
    \begin{array}{ll}      \KL{\pa{\mu(A_{i,k})}_{i\in [n_k]}}{(y_i)_{i\in [n_k]}} & \mbox{if } (y_i)_{i\in [n_k]} \in  \cM_1\pa{[n_k]}\\
        \infty & \mbox{otherwise.}
    \end{array}
\right.\]
From the Contraction Principle (Fact~\ref{fact:contraction}), we get that $\EEs{X_n}{\phi_k} = \sum_{i\in [n_k]} c_{i,k} X_n(A_{i,k})$  satisfies an LDP with rate function $J_k$ given by
\begin{align*}
    J_k(x)&=\inf \sset{I_k(y_1,\dots,y_{n_k}), 
    (y_i)_{i\in [n_k]} \in  \cM_1\pa{[n_k]},
    \sum_{i\in [n_k]} c_{i,k}y_i = x}
    \\&=
    \inf \sset{\KL{\pa{\mu(A_{i,k})}_{i\in [n_k]}}{\pa{\nu(A_{i,k})}_{i\in [n_k]}}, 
    ~\nu\in \cM_1(\Omega),
    \int_\Omega \phi_k d\nu = x}
    .
\end{align*}
In particular, we obtain the large deviations lower bound
\begin{align*}
&\liminf_n \frac{1}{n}\log\PPs{\DP(\mu_n)}{U_{\phi,x,\eta}}\\&\geq
\liminf_n \frac{1}{n}\log\PPs{X\sim\DP(\mu_n)}{\abs{\int_\Omega \phi_k dX - x }<\eta - \epsilon}\\&\geq -\inf_{\abs{y-x}<\eta-\epsilon} J_k(y)
\\&=
-\inf_{\nu\in \cM_1(\Omega),~\abs{\int_\Omega \phi_k d\nu-x}<\eta-\epsilon} \KL{\pa{\mu(A_{i,k})}_{i\in [n_k]}}{\pa{\nu(A_{i,k})}_{i\in [n_k]}}
\\&\geq 
-\inf_{\nu\in \cM_1(\Omega),~\abs{\int_\Omega \phi d\nu-x}<\eta-2\epsilon} \KL{\pa{\mu(A_{i,k})}_{i\in [n_k]}}{\pa{\nu(A_{i,k})}_{i\in [n_k]}}
\\&\geq 
-\inf_{\nu\in \cM_1(\Omega),~\abs{\int_\Omega \phi d\nu-x}<\eta-2\epsilon} \sup_k\KL{\pa{\mu(A_{i,k})}_{i\in [n_k]}}{\pa{\nu(A_{i,k})}_{i\in [n_k]}}
\\&= 
-\inf_{\nu\in \cM_1(\Omega),~\abs{\int_\Omega \phi d\nu-x}<\eta-2\epsilon} \KL{\mu}{\nu},
\end{align*}
where the last equality is from Fact~\ref{fact:chara_kl}. We can let $\epsilon$ decrease to zero to get
\[\liminf_n \frac{1}{n}\log\PPs{\DP(\mu_n)}{U_{\phi,x,\eta}}\geq -\inf_{\nu\in \cM_1(\Omega),~\abs{\int_\Omega \phi d\nu-x}<\eta} \KL{\mu}{\nu},\]
which is the desired large deviations lower bound for the set $U_{\phi,x,\eta}$. We thus have the large deviations lower bound for a base of the weak topology on
$\cM_1(\Omega)$, and hence for
all open sets. 
\end{proof}

\end{supplement}

\end{document}